\newcommand{\Z}{\mathbb{Z}}
\newcommand{\R}{\mathbb{R}}
\newcommand{\C}{\mathbb{C}}
\newcommand{\N}{\mathbb{N}}
\newcommand{\vol}{\mathop{\mathrm{vol}}\nolimits}
\newcommand{\supp}{\mathop{\mathrm{supp}}\nolimits}
\def\MR#1{\href{http://www.ams.org/mathscinet-getitem?mr=#1}{MR#1}}
\def\arXiv#1{arXiv:\href{http://arXiv.org/abs/#1}{#1}}
\newtheorem{theorem}{Theorem}[section]
\newtheorem{proposition}[theorem]{Proposition}
\newtheorem{conjecture}[theorem]{Conjecture}
\newtheorem{corollary}[theorem]{Corollary}
\newtheorem{lemma}[theorem]{Lemma}
\theoremstyle{remark}
\newtheorem{remark}[theorem]{Remark}
\theoremstyle{definition}
\newtheorem{definition}[theorem]{Definition}
\numberwithin{equation}{section}
\numberwithin{table}{section}
\numberwithin{figure}{section}
\title{Three-point bounds for sphere packing}
\author{Henry Cohn}
\address{Microsoft Research New England\\
One Memorial Drive\\
Cambridge, MA 02142\\ USA} \email{cohn@microsoft.com}
\author{David de Laat}
\address{Delft Institute of Applied Mathematics\\
Delft University of Technology\\ Delft, The Netherlands} \email{d.delaat@tudelft.nl}
\author{Andrew Salmon}
\address{Department of Mathematics\\
Massachusetts Institute of Technology\\
Cambridge, MA 02139\\ USA} \email{asalmon@mit.edu}
\date{June 30, 2022}
\thanks{Salmon was supported by an internship at Microsoft Research
New England, and de Laat was supported by Rubicon grant 680-50-1528 of the Netherlands Organisation for Scientific Research.}
\subjclass[2020]{52C17, 90C22, 11H31}
\begin{document}

\begin{abstract}
We define three-point bounds for sphere packing that refine the linear programming bound, and we compute these bounds numerically using semidefinite programming by choosing a truncation radius for the three-point function. As a result, we obtain new upper bounds on the sphere packing density in dimensions~$4$ through~$7$ and~$9$ through~$16$. 
We also give a different three-point bound for lattice packing and conjecture that this second bound is sharp in dimension~$4$.
\end{abstract}

\maketitle

\setcounter{tocdepth}{1}
\tableofcontents

\section{Introduction}

Beginning with the work of Delsarte \cite{Delsarte1972}, linear programming bounds have been used to obtain upper bounds for packing problems in discrete geometry.  The analogue of the Delsarte bound for the sphere packing density in Euclidean space was given by Cohn and Elkies \cite{cohn2003new} and yields the best upper bounds on sphere packing known in most dimensions greater than $3$ (see \cite{cohn2014sphere, SardariZargar}).  These bounds are sharp in certain special cases \cite{viazovska2017,CKMRV2017} and give a systematic method of producing upper bounds numerically for a wide range of packing problems. However, they generally fall short of matching the known constructions.

Linear programming bounds are given this name because optimizing such a bound can be viewed as solving a possibly infinite-dimensional linear program. They are also known as two-point bounds, because they can be derived from constraints on the pair correlation functions of packings.

Given the effectiveness but incompleteness of two-point bounds, it is natural to ask for refinements of these bounds.  For example, two-point bounds are not sharp for the kissing problem in three and four dimensions, but Musin \cite{musin2006kissing,musin2008kissing} found a refinement capable of solving these problems, and Pfender \cite{pfender2007improved} generalized Musin's refinement.   The technique of Musin and Pfender produces step functions that are not positive definite kernels and thus are not feasible solutions for the Delsarte linear program, but that are nevertheless copositive and are therefore feasible solutions for the conic dual of the packing problem, formulated as a conic program over the cone of completely positive measures. See \cite{DDFV} for a general framework for such bounds on compact spaces.

At around the same time, Schrijver \cite{Schrijver2005} formulated a semidefinite programming bound for binary error-correcting codes, based not on pair correlations but rather three-point correlations. Bachoc and Vallentin \cite{bachoc2008new} extended this idea to the continuous setting and placed it in a more general context of harmonic analysis, while Gijswijt, Mittelmann, and Schrijver \cite{GMS2012} analyzed four-point interactions for binary codes. The culmination so far of this line of work is de Laat and Vallentin's general framework for higher-order bounds \cite{de2015semidefinite}. In practice, three-point bounds have proved to be numerically feasible and fruitful to analyze, while four-point bounds are far more challenging and have been used in only a few cases \cite{GMS2012,de2018k,deLaat2020,KaoYu}.

Bounds analogous to those of Musin and Pfender have been used by de Laat, Oliveira, and Vallentin \cite{de2014upper} to produce systematic improvements over the linear programming bound for sphere packing.  Their technique uses bounds on spherical codes to produce copositive step functions on Euclidean space, which can be added to the positive definite functions used in the Cohn-Elkies linear program. The resulting bounds improve on the two-point bound in dimensions~$4$ through~$7$ and~$9$ by amounts decreasing from $0.51\%$ to $0.054\%$, and they are the best bounds known in these dimensions.\footnote{Dimension~$8$ is skipped because the two-point bound is already sharp \cite{viazovska2017}.} The same approach \cite{ACHLT} also yields small improvements in all dimensions from $10$ to $179$ except $24$, but the resulting bounds have not been computed explicitly, and it is unclear whether this technique will help in higher dimensions.

In this paper, we develop three-point bounds for sphere packing in Euclidean space.
These bounds improve on those of de Laat, Oliveira, and Vallentin in every dimension in which both bounds have been computed.  In particular, the three-point bounds are the best upper bounds known for sphere packing in dimensions~$4$ through~$7$ and~$9$ through~$16$, 
and we expect that they are strictly stronger than the previously known bounds in every dimension above~$3$ except for the sharp cases~$8$ and~$24$.

No three-point or higher bounds had previously been applied to sphere packing in Euclidean space. As we will see, doing so involves technical obstacles that do not arise in compact spaces.
Truncation plays an essential role in our numerics, via truncated three-point functions that are more flexible than step functions while satisfying the necessary three-point constraints.

We now give a formulation of the truncated three-point bound for sphere packing.  A function $f_3 \colon \R^n \times \R^n \to \R$ is \emph{positive definite as a kernel}, denoted $f_3 \succeq 0$,  if  $f_3(x,y) = f_3(y,x)$ for all $x,y \in \R^n$ and
\begin{equation} \label{eq:pdkineq}
\sum_{x,y \in C} c_x c_y f_3(x,y) \ge 0
\end{equation}
for all finite subsets $C \subseteq \R^n$ and weights $c \in \R^C$. Recall that if $f_3$ is given by $f_3(x,y)= f_2(x-y)$ for some continuous function $f_2 \colon \R^n \to \R$ (in other words, if $f_3$ is continuous and translation invariant), then $f_3$ is positive definite as a kernel if and only if $f_2$ is the Fourier transform of a finite Borel measure on $\R^n$, by Bochner's theorem. Furthermore, if $f_2$ is continuous and integrable, then $f_3 \succeq 0$ iff $\widehat{f_2} \ge 0$. (Note that $\widehat{f_2}$ is automatically integrable under these conditions, by Corollary~1.26 in Chapter~I
of \cite{SteinWeiss}.)
However, being positive definite as a kernel is considerably more general than this translation-invariant special case. For reference, note that we normalize the Fourier transform by
\[
\widehat{f}(y) = \int_{\R^n} f(x) e^{-2\pi i \langle x,y \rangle} \, dx.
\]

\begin{theorem}\label{thm:threepoint}
Let $0 < r < R$, and suppose $f_2 \colon \R^n \to \R$ and $f_3 \colon \R^n \times \R^n \to \R$ are functions such that $f_2$ is continuous and integrable, $\widehat f_2(0) = 1$, $\widehat f_2 \geq 0$ everywhere, and $f_3$ is positive definite as a kernel. If
\[
f_2(x) + f_3(x,0) + f_3(0,x) + f_3(x,x)\leq 0 \text{ for } r \leq |x| \le R,
\]
\[
f_2(x) \leq 0 \text{ for } |x| \ge R,
\] and 
\begin{equation} \label{eq:f3}
f_3(x,y) \le 0 \text{ whenever } r \leq|x|,|y| \leq R \text{ and } |x-y| \ge r,
\end{equation}
then the sphere packing density in $\R^n$ is at most
\[
\vol(B_{r/2}^n) (f_2(0) + f_3(0,0)),
\]
where $B_{r/2}^n$ denotes the closed ball of radius $r/2$ centered at the origin in $\R^n$.
\end{theorem}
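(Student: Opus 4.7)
The plan is to extend the Cohn--Elkies Poisson summation proof by adjoining to the standard pair sum a three-point contribution that is nonnegative by positive definiteness. By the usual reduction, it suffices to bound the density of an arbitrary periodic packing; fix the period lattice $\Lambda$ and the centers $x_1,\dots,x_N$ of the balls of radius $r/2$ in a fundamental domain, and write $X = \{x_j + v : j \in \{1,\dots,N\},\, v \in \Lambda\}$ for the full set of centers, so that distinct points of $X$ are at distance at least $r$.

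A preliminary step is to assume without loss of generality that $f_3(x,y) = 0$ whenever $|x| > R$ or $|y| > R$: replacing $f_3$ by $\chi_{B_R^n}(x)\chi_{B_R^n}(y) f_3(x,y)$ preserves positive definiteness as a kernel (apply \eqref{eq:pdkineq} with weights $c_x \chi_{B_R^n}(x)$), leaves $f_3(0,0)$ unchanged, and does not disturb any hypothesis, since all constraints on $f_3$ live in $B_R^n \times B_R^n$. With this reduction in place, I form the two sums
\[
A := \sum_{v \in \Lambda}\sum_{j,k=1}^N f_2(x_j - x_k + v), \qquad B := \sum_{k=1}^N \sum_{x,y \in X} f_3(x - x_k,\, y - x_k),
\]
both now absolutely convergent (the inner sum in $B$ is finite by the support condition on $f_3$). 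Poisson summation applied to $A$ gives
\[
A = \vol(\R^n/\Lambda)^{-1} \sum_{w \in \Lambda^*} \widehat{f_2}(w)\, \Bigl|\sum_{j=1}^N e^{2\pi i \langle w, x_j\rangle}\Bigr|^2,
\]
whence $A \ge N^2/\vol(\R^n/\Lambda)$ because $\widehat{f_2} \ge 0$ and $\widehat{f_2}(0) = 1$. For each $k$, applying positive definiteness of $f_3$ with weights $c_x = 1$ on the finite set $\{x \in X : |x - x_k| \le R\}$ shows that the corresponding summand of $B$ is nonnegative, so $B \ge 0$.

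The main step is the matching upper bound $A + B \le N(f_2(0) + f_3(0,0))$, obtained by dissecting the $k$-th summand. Setting $Z_k := (X - x_k) \setminus \{0\}$, split pairs $(x,y) \in X \times X$ according to whether $x$ or $y$ equals $x_k$; the $k$-th contribution becomes
\[
f_2(0) + f_3(0,0) + \sum_{z \in Z_k}\bigl(f_2(z) + f_3(0,z) + f_3(z,0) + f_3(z,z)\bigr) + \sum_{\substack{z_1, z_2 \in Z_k \\ z_1 \ne z_2}} f_3(z_1,z_2).
\]
Since $Z_k$ lies in a translate of the packing, its elements have norm $\ge r$ and mutual distance $\ge r$; hypothesis~(1) therefore annihilates the first sum on the annulus $r \le |z| \le R$, hypothesis~(2) handles the tail $|z| > R$ (where the $f_3$ terms vanish by truncation), and hypothesis~\eqref{eq:f3} handles the double sum (its tail vanishing by truncation as well). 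Summing over $k$ and combining with the Poisson lower bound gives the desired density bound $\delta \le \vol(B_{r/2}^n)(f_2(0) + f_3(0,0))$.

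The main obstacle I expect is the bookkeeping in that final dissection: the per-$k$ expansion must be reorganized just so, so that the fourfold combination $f_2(z) + f_3(z,0) + f_3(0,z) + f_3(z,z)$ from hypothesis~(1) and the off-diagonal $f_3(z_1,z_2)$ from hypothesis~\eqref{eq:f3} exhaust precisely the terms that appear. The up-front truncation of $f_3$ is what makes the double sum in $B$ finite and what allows hypothesis~(2) on $f_2$ to take over from hypotheses~(1) and~\eqref{eq:f3} past radius $R$.
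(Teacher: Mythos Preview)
Your proof is correct and follows essentially the same route as the paper: Poisson summation for the two-point part, positive definiteness of $f_3$ for the three-point part, and a per-center dissection to bound the combined sum from above. Your up-front truncation of $f_3$ to $B_R^n\times B_R^n$ is equivalent to the paper's restriction of the $f_3$ sums to the truncated difference multiset $\mathcal{D}_R$; the one point you gloss over---justifying Poisson summation for a merely continuous, integrable $f_2$---is handled in the paper by a standard mollification argument.
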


Much like the two-point bound from \cite{cohn2003new}, this theorem shows how to obtain sphere packing density bounds from auxiliary functions $f_2$ and $f_3$ satisfying certain inequalities, but it does not tell us which auxiliary functions optimize the resulting bound. Aside from the cases in which the two-point bound is already sharp, we do not know how to optimize this three-point bound exactly. Nevertheless, we can obtain strong bounds from specific choices of $f_2$ and $f_3$.

Theorem~\ref{thm:threepoint} reduces to the two-point bound when $f_3=0$, in which case the truncation radius $R$ becomes irrelevant. In the general case, truncation plays a significant role.
In principle, one could study the untruncated three-point bound with $R = \infty$. However, we do not know how to approximate it numerically. The difficulty is figuring out a suitable finite-dimensional space of functions over which we can optimize the bound. For the two-point bound, polynomials times Gaussians work beautifully, but $f_3$ cannot be of this form when $R = \infty$ unless $f_3=0$:

\begin{lemma}
Suppose $f_3 \colon \R^n \times \R^n \to \R$ is given by $f_3(x,y) = p(x,y) e^{-\alpha(|x|^2+|y|^2)}$, where $p$ is a polynomial and $\alpha\ge 0$, and let $r>0$. If $f_3$ is positive definite as a kernel and satisfies $f_3(x,y) \le 0$ whenever $|x|,|y| \ge r$ and $|x-y| \ge r$, then $f_3$ must be identically zero.
\end{lemma}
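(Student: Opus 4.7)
The plan is to reduce the claim about $f_3$ to a statement about the polynomial $p$ alone, and then derive a contradiction by passing to the top-degree homogeneous part of $p$ via a scaling argument.

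First I would observe that $f_3$ is positive definite as a kernel if and only if $p$ is: for any finite $C \subseteq \R^n$ and weights $c \in \R^C$, setting $d_x := c_x e^{\alpha|x|^2}$ gives $\sum_{x,y \in C} d_x d_y f_3(x,y) = \sum_{x,y \in C} c_x c_y p(x,y)$, and the map $c \mapsto d$ is a bijection on $\R^C$. Hence $p$ is positive definite as a kernel; in particular, $q(x) := p(x,x) \ge 0$ for all $x$. Since $e^{-\alpha(|x|^2+|y|^2)} > 0$, the hypothesis on $f_3$ translates to $p(x,y) \le 0$ whenever $|x|,|y| \ge r$ and $|x-y| \ge r$.

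Suppose for contradiction that $p \not\equiv 0$, let $D \ge 0$ be its total degree, and let $p_D \not\equiv 0$ be its homogeneous component of total degree $D$. For any $x_0,y_0 \in \R^n$ with $x_0, y_0, x_0-y_0$ all nonzero, the point $(tx_0, ty_0)$ lies in the region $\{|x|,|y|,|x-y| \ge r\}$ for all sufficiently large $t>0$, so $p(tx_0, ty_0) \le 0$; dividing by $t^D$ and letting $t \to \infty$ yields $p_D(x_0,y_0) \le 0$, and continuity extends this to all $x_0,y_0$. An analogous scaling applied to any Gram-type sum $\sum_{i,j} c_i c_j p(tx_i, tx_j) \ge 0$, dividing by $t^D$ and passing to the limit, shows that $p_D$ is itself positive definite as a kernel.

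The diagonal polynomial $q(x) = p(x,x) \ge 0$ has total degree at most $D$, and its degree-$D$ homogeneous component is precisely $p_D(x,x)$. If $p_D(x,x)$ were not identically zero, it would be the leading homogeneous part of the nonnegative polynomial $q$, and hence nonnegative, while $p_D \le 0$ everywhere forces $p_D(x,x) \le 0$; these can only coexist if $p_D(x,x) \equiv 0$. Cauchy--Schwarz for a positive definite kernel then gives $p_D(x,y)^2 \le p_D(x,x)p_D(y,y) = 0$, so $p_D \equiv 0$, contradicting the choice of $D$. Hence $p \equiv 0$ and $f_3 \equiv 0$. The main obstacle I anticipate is identifying the correct invariant to scale to --- the total-degree homogeneous part of $p$, rather than, say, the degree in $y$ alone --- and simultaneously tracking both the sign constraint inherited from the packing hypothesis and the positive semidefiniteness inherited from the kernel hypothesis so that Cauchy--Schwarz can close the argument.
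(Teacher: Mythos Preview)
Your proof is correct and takes a genuinely different route from the paper's. The paper first reduces to $p$ being positive definite (via the Schur product with $e^{\alpha(|x|^2+|y|^2)}$, whereas you use the equivalent and slightly slicker rescaling of weights), then invokes a Mercer-type decomposition $p(x,y)=\sum_q q(x)q(y)$ into finitely many polynomials, picks a line $\{\lambda x_0\}$ on which some $q$ is not identically zero, and observes that for $\lambda,\mu$ beyond the last root of each $q$ on that line, $p(\lambda x_0,\mu x_0)>0$, contradicting the sign constraint. Your argument instead scales to the top-degree homogeneous part $p_D$, inherits both the sign condition $p_D\le 0$ and positive definiteness in the limit, and then kills $p_D$ via the Cauchy--Schwarz inequality for positive definite kernels once $p_D(x,x)\equiv 0$. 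Your approach is more elementary in that it avoids any spectral/Mercer machinery and works purely with homogeneity and the $2\times 2$ Gram inequality; the paper's approach is more concrete in that it actually locates points where $p$ is strictly positive. One small simplification: once you know $p_D$ is positive definite, you get $p_D(x,x)\ge 0$ directly from the one-point Gram condition, so the detour through $q(x)=p(x,x)$ and its leading part is unnecessary---combined with $p_D\le 0$ this already gives $p_D(x,x)\equiv 0$.
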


\begin{proof}
We first observe that if $f_3$ is positive definite, then $p$ is positive definite, since $e^{\alpha(|x|^2+|y|^2)}$ is positive definite and the pointwise product of positive definite kernels is positive definite. It then follows from Mercer's theorem \cite[Theorem~3.11.9]{SimonVol4} that $p$ can be written as a sum
\begin{equation} \label{eq:polymercer}
p(x,y) = \sum_q q(x)q(y)
\end{equation}
over finitely many polynomials $q$. Specifically, these polynomials $q$ are a basis of eigenfunctions for the map $q \mapsto (x \mapsto \int_{B_1^n(0)} p(x ,y) q(y) \, dy$), and in fact the finite-dimensional spectral theorem can be used instead of Mercer's theorem.

Assuming $p$ is not identically zero, we can choose a unit vector $x_0$ such that at least one polynomial $q$ appearing in the sum \eqref{eq:polymercer} does not vanish identically on the line $\{\lambda x_0 : \lambda \in \R\}$. For each such polynomial $q$, let $\lambda_q x_0$ be its largest root along this line. Then whenever $\lambda, \mu > \max_q \lambda_q$, we must have 
\[
p(\lambda x_0, \mu x_0) = \sum_q q(\lambda x_0) q(\mu x_0) > 0,
\]
which contradicts the requirement that $f_3(x,y) \le 0$ whenever $|x|,|y| \ge r$ and $|x-y| \ge r$.
\end{proof}

Although introducing the truncation parameter $R$ is a simple idea, it is not obvious to us that it is the best way to obtain three-point bounds in Euclidean space. We tried a number of other ideas, each of which failed to improve on two-point bounds numerically, and we therefore consider truncation a key technical contribution to our bounds. However, there are many possibilities we have not investigated thoroughly. Perhaps polynomials times Gaussians are simply not the right sort of auxiliary functions to use for three-point bounds.

One might expect $R$ to tend rapidly to infinity as we increase the degree of the polynomials being used, but that does not happen in practice. Instead, the best choice of $R$ we can find for each given degree is surprisingly small. We do not know what happens in the high-degree limit, but the behavior for low degrees suggests that three-point interactions are most relevant over short distances.

\begin{table}
\caption{The lower bound column shows the best sphere packing density known in dimension $n$, rounded down. The $\Delta^+_3$ column shows rigorous sphere packing upper bounds obtained using Theorem~\ref{thm:threepoint} with truncation radius $R$, while the $\Delta_\mathrm{lat}$ column shows rigorous lattice sphere packing upper bounds obtained using Theorem~\ref{thm:latticethreepoint}. The $\Delta_2$ column is the linear programming bound as computed in \cite{ACHLT}, and $\Delta^\mathrm{str}_2$ is the strengthening given in \cite{de2014upper}.}
\label{table:summary}
\begin{center}
\begin{tabular}{ccccccc}
\toprule
$n$ & Lower bound & $\Delta^+_3$ & $R$ & $\Delta_\mathrm{lat}$ & $\Delta^\mathrm{str}_2$ & $\Delta_2$\\
\midrule
$3$  & $0.740480$ & $0.770271$ & $1.886835$ & $0.741506$ & $0.773080$ & $0.779747$ \\
$4$  & $0.616850$ & $0.636108$ & $1.664634$ & $0.616859$ & $0.644422$ & $0.647705$ \\
$5$  & $0.465257$ & $0.512646$ & $1.501370$ & $0.485080$ & $0.523263$ & $0.524981$ \\ 
$6$  & $0.372947$ & $0.410304$ & $1.470906$ & $0.383938$ & $0.416611$ & $0.417674$ \\
$7$  & $0.295297$ & $0.321148$ & $1.402837$ & $0.309975$ & $0.326921$ & $0.327456$ \\
$8$  & $0.253669$ & --         & --         & --         & --         & $0.253670$ \\
$9$  & $0.145774$ & $0.191121$ & $1.383568$ & $0.193475$ & $0.194451$ & $0.194556$ \\
$10$ & $0.099615$ & $0.143411$ & $1.331593$ & --         & --         & $0.147954$ \\
$11$ & $0.066238$ & $0.106726$ & $1.209355$ & --         & --         & $0.111691$ \\
$12$ & $0.049454$ & $0.079712$ & $1.182417$ & --         & --         & $0.083776$ \\
$13$ & $0.032014$ & $0.060165$ & $1.140159$ & --         & --         & $0.062482$ \\
$14$ & $0.021624$ & $0.045062$ & $1.152887$ & --         & --         & $0.046365$ \\
$15$ & $0.016857$ & $0.033757$ & $1.120467$ & --         & --         & $0.034249$ \\
$16$ & $0.014708$ & $0.024995$ & $1.148582$ & --         & --         & $0.025195$ \\
\bottomrule
\end{tabular}
\end{center}
\end{table}

Table~\ref{table:summary} compares Theorem~\ref{thm:threepoint} with other sphere packing bounds.
The $\Delta^+_3$ column shows the best sphere packing bounds we have obtained from Theorem~\ref{thm:threepoint} using semidefinite programming. It is unclear whether the three-point bound is ever sharp in any case not already resolved by the two-point bound. Our numerical data suggests that this does not happen up through sixteen dimensions, 
although additional optimization could lead to further improvements in low dimensions.

In twelve and sixteen dimensions, the upper bound we achieve for the sphere packing density is less than the dual lower bound proved by Cohn and Triantafillou \cite{cohn2019dual} for the linear programming bound. It follows that the linear programming bound cannot be sharp in those dimensions:

\begin{corollary}
For $n \in \{12,16\}$, no sphere packing in $\R^n$ can have density equal to the $n$-dimensional linear programming bound.
\end{corollary}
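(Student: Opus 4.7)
The plan is to combine two ingredients: the sphere packing upper bounds furnished by Theorem~\ref{thm:threepoint} and recorded in the $\Delta_3^+$ column of Table~\ref{table:summary}, and the rigorous lower bounds on the optimal value of the Cohn-Elkies linear program proved by Cohn and Triantafillou~\cite{cohn2019dual}. Writing $\Delta_{\mathrm{pack}}(n)$ for the sphere packing density in $\R^n$ and $\Delta_{\mathrm{LP}}(n)$ for the optimal value of the two-point linear program, one always has $\Delta_{\mathrm{pack}}(n) \le \Delta_{\mathrm{LP}}(n)$, and any packing attaining the LP bound would force equality here. Thus it suffices to establish the strict inequality $\Delta_{\mathrm{pack}}(n) < \Delta_{\mathrm{LP}}(n)$ for $n \in \{12,16\}$.

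First I would apply Theorem~\ref{thm:threepoint} with the auxiliary functions $f_2, f_3$ and truncation parameters $r, R$ used to produce the $n=12$ and $n=16$ rows of Table~\ref{table:summary}, obtaining $\Delta_{\mathrm{pack}}(12) \le 0.079712$ and $\Delta_{\mathrm{pack}}(16) \le 0.024995$. Second, I would quote the explicit dual feasible solutions to the Cohn-Elkies program constructed in~\cite{cohn2019dual}, which provide certified lower bounds on $\Delta_{\mathrm{LP}}(12)$ and $\Delta_{\mathrm{LP}}(16)$. Provided these lower bounds exceed the two numerical values above, the chain
\[
\Delta_{\mathrm{pack}}(n) \le \Delta_3^+(n) < \Delta_{\mathrm{LP}}(n)
\]
is strict, and the corollary follows at once.

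The main obstacle is making both halves of the comparison genuinely rigorous rather than merely numerical. On the primal side, the SDP output underlying $\Delta_3^+$ is only approximately feasible, so one must purify it into a certified admissible pair $(f_2, f_3)$ for Theorem~\ref{thm:threepoint}; this entails verifying $\widehat{f_2} \ge 0$, positive definiteness of $f_3$ as a kernel, the sign condition on the annulus $r \le |x| \le R$, and the sign condition~\eqref{eq:f3}, typically via sum-of-squares certificates combined with interval arithmetic on the polynomial coefficients. On the dual side, one must import the Cohn-Triantafillou values together with their certified error estimates. Once both halves are rigorous, the corollary in each dimension reduces to comparing two explicit real numbers.
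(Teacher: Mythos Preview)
Your proposal is correct and matches the paper's reasoning exactly: the corollary is deduced from the rigorous three-point upper bounds $\Delta_3^+$ in Table~\ref{table:summary} being strictly smaller than the Cohn--Triantafillou dual lower bounds on $\Delta_{\mathrm{LP}}$ in dimensions $12$ and $16$. Your discussion of the rigor issues (sum-of-squares certificates and interval arithmetic on the primal side, certified modular-form constructions on the dual side) is also on target and corresponds to what the paper does in Section~\ref{sec:verification} together with the cited results of~\cite{cohn2019dual}.
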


In particular, $\R^{16}$ cannot share the remarkable behavior of $\R^8$ and $\R^{24}$ from \cite{viazovska2017,CKMRV2017}, even if a better sphere packing is discovered.
This conclusion presumably holds for all $n > 2$ except for $n=8$ or $24$, but previous bounds left open the possibility that the linear programming bound could be sharp in every dimension. Li \cite{Li} has since proved new dual bounds that rule out sharpness for $n=3$ and $4$ as well.

For the special case of lattice sphere packings we give an alternative three-point bound in Theorem~\ref{thm:latticethreepoint}. This lattice bound is similar in spirit to a theorem of Trinker \cite[Theorem~6.1]{Trinker}, which deals with linear error-correcting codes. As shown in the $\Delta_\mathrm{lat}$ column in Table~\ref{table:summary}, the numerical results suggest this lattice bound is sharp in dimension $4$, and we conjecture its sharpness as Conjecture~\ref{conjectureR4}. It is an open question whether techniques like those introduced by Viazovska \cite{viazovska2017} can be used to construct an optimal function analytically.

\begin{theorem}\label{thm:latticethreepoint}
Let $r> 0$ and define
\[
S_{\mathrm{lat},n} = \big\{ (x,y) \in \R^n \times \R^n : |x|,|y|,|x+y|,|x-y| \in \{0\} \cup [r,\infty)\big\}.
\]
Suppose $f \colon \R^{n} \times \R^n \to \R$ is a continuous, integrable function with $\widehat f \geq 0$ everywhere, $\widehat f(0,0)=1$, and 
\[
f(x,y) \le 0 \text{ for } (x,y) \in S_{\mathrm{lat},n} \setminus \{(0,0)\}.
\]
Then
\[
\vol(B_{r/2}^n) \sqrt{f(0,0)}
\]
is an upper bound on the optimal lattice sphere packing density in $\R^n$.
\end{theorem}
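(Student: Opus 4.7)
The plan is to apply Poisson summation to $f$ on the product lattice $\Lambda \times \Lambda \subset \R^{2n}$, in direct analogy with the two-point Cohn--Elkies argument. By rescaling (and using that packing density is scale invariant) it suffices to bound the density of lattices $\Lambda \subset \R^n$ whose minimum distance is exactly $r$, for which the packing radius is $r/2$ and the density equals $\vol(B_{r/2}^n)/\vol(\R^n/\Lambda)$.

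The key arithmetic observation is that $\Lambda$ is closed under addition and subtraction, so for any $x,y \in \Lambda$ the four vectors $x,y,x+y,x-y$ all lie in $\Lambda$, and therefore each of $|x|,|y|,|x+y|,|x-y|$ is either $0$ or at least $r$. Hence every pair $(x,y) \in \Lambda \times \Lambda$ lies in $S_{\mathrm{lat},n}$, so the hypothesis on $f$ gives $f(x,y) \le 0$ for all such pairs except $(0,0)$. Applying Poisson summation to $f$ on $\Lambda \times \Lambda$, which has dual $\Lambda^* \times \Lambda^*$ and covolume $\vol(\R^n/\Lambda)^2$, yields
\[
\sum_{(x,y) \in \Lambda \times \Lambda} f(x,y) \;=\; \frac{1}{\vol(\R^n/\Lambda)^2}\sum_{(u,v) \in \Lambda^* \times \Lambda^*}\widehat f(u,v).
\]
The previous step bounds the left-hand side above by $f(0,0)$, while the hypotheses $\widehat f \ge 0$ and $\widehat f(0,0) = 1$ bound the right-hand side below by $1/\vol(\R^n/\Lambda)^2$. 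Rearranging gives $\vol(\R^n/\Lambda) \ge 1/\sqrt{f(0,0)}$, and therefore the density is at most $\vol(B_{r/2}^n)\sqrt{f(0,0)}$, as claimed.

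The main obstacle is purely technical: continuous integrable $f$ with $\widehat f \ge 0$ does not by itself guarantee absolute convergence of either lattice sum, so Poisson summation is not automatic. I would handle this exactly as in the original two-point bound, now in dimension $2n$, either by imposing the admissibility condition that $f$ and $\widehat f$ decay like $O((1+|v|)^{-2n-\delta})$ (so both sides converge absolutely and Poisson summation is standard), or by approximating $f$ by Schwartz functions, taking a limit, and using the closedness of $S_{\mathrm{lat},n}$ together with the continuity of $f$ to ensure that the sign constraint is preserved in the limit. No new ideas beyond those already used for the Cohn--Elkies bound are needed for this step.
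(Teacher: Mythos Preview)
Your Poisson-summation argument on $\Lambda\times\Lambda$ is exactly the paper's, and the inequality chain is correct once Poisson summation is justified. The gap is in your final paragraph: the claim that the convergence issue can be handled ``exactly as in the original two-point bound'' and that ``no new ideas beyond those already used for the Cohn--Elkies bound are needed'' is false, and the paper says so explicitly. The obstruction is that $S_{\mathrm{lat},n}$ contains lower-dimensional strata such as $\{(x,0):|x|\ge r\}$ and $\{(x,\pm x):|x|\ge r\}$. The Cohn--Elkies mollification replaces $f$ by a convolution $(f*\iota_\varepsilon*\iota_\varepsilon)\,\widehat{\iota}_\varepsilon^{\,2}$, which smears the sign condition by $O(\varepsilon)$; for the two-point constraint $\{|x|\ge r\}$ this merely shifts $r$ to $r+2\varepsilon$ and is absorbed by rescaling. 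But an $\varepsilon$-ball around a point $(x,0)\in S_{\mathrm{lat},n}$ is full of points $(x',y')$ with $0<|y'|<\varepsilon<r$, which lie \emph{outside} $S_{\mathrm{lat},n}$, so you have no sign information there and the mollified function need not be $\le 0$ at $(x,0)$. No rescaling fixes this, because the stratum has positive codimension. Your alternative of simply imposing extra decay hypotheses proves a weaker theorem than the one stated.

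The paper deals with this by developing, in Section~\ref{sec:duality}, a duality theory for $C$-constrained linear programming bounds: Theorem~\ref{theorem:nodualitygap} proves there is no duality gap, and Proposition~\ref{prop:mollify} shows, via a two-parameter mollification $h=(f\cdot\widehat\varphi_{\varepsilon_1})*\varphi_{\varepsilon_2}$ with $\varepsilon_2\to 0$ taken first, that continuous integrable $f$ cannot beat Schwartz $f$. The point is that the intermediate function $h$ does \emph{not} satisfy the sign constraint; instead one passes to the limit inside the Plancherel identity and recovers the constraint only for $f\cdot\widehat\varphi_{\varepsilon_1}$, which preserves signs since it is $f$ times a positive function. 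Equivalently, one can apply Proposition~\ref{prop:mollify} directly with $T=\sum_{(x,y)\in\Lambda\times\Lambda}\delta_{(x,y)}$, whose distributional Fourier transform is $\vol(\R^n/\Lambda)^{-2}\sum_{(u,v)\in\Lambda^*\times\Lambda^*}\delta_{(u,v)}$. The authors remark after Proposition~\ref{prop:mollify} that for the three-point lattice bound ``we do not know how to carry out a direct mollification,'' so this machinery is not optional.
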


In Section~\ref{sec:truncation}, we give two proofs that the truncated three-point bound in Theorem~\ref{thm:threepoint} is an upper bound on sphere packing, one by using Poisson summation and the other by showing that three-point bounds in Euclidean space are a suitable limit of three-point bounds in compact spaces.  We also show a direct relationship to the three-point bounds for spherical codes, along the same lines as the relationship derived by Cohn and Zhao \cite{cohn2014sphere} in the two-point case.

In Section~\ref{sec:duality}, we prove that certain infinite-dimensional convex optimization problems have no duality gap. These problems include the linear programming bound for sphere packing (which proves a conjecture from \cite{cohn2002new}), the lattice three-point bound, and another problem that arises naturally in Section~\ref{sec:lattice}.

In Section~\ref{sec:lattice}, we prove that the lattice three-point bound in Theorem~\ref{thm:latticethreepoint} is an upper bound on the density of lattice packing.  Furthermore, we use a limiting argument based on the multiplicativity of the Lov\'asz theta prime number for the disjunctive product of graphs to show that the lattice three-point bound is always at least as strong as the two-point linear programming bound in theory.  (In practice, improving on it numerically may require computations with infeasibly high-degree polynomials when the dimension is large.)

In Section~\ref{sec:sdp}, we give an algorithm to numerically compute both the truncated sphere packing and the lattice bounds using semidefinite programming.  The techniques are similar to the three-point bounds for spherical codes, with a space of positive definite functions given by positive definiteness of a matrix and using a sum-of-squares formulation to check inequalities over a semialgebraic set.

Finally, we give tables of results in Section~\ref{sec:numerical}.  In particular, we show the improvements of truncated three-point bounds over existing upper bounds, and we explain how these bounds are rigorously verified.  We also examine the convergence of the lattice three-point bound, and conjecture the sharpness of the bound in dimension~$4$.

\section{Truncated three-point bound for the sphere packing problem}\label{sec:truncation}

\subsection{Truncated Poisson summation}

We begin by obtaining the three-point bound from a variant of the Poisson summation argument used in \cite{cohn2003new}.

\begin{proof}[Proof of Theorem~\ref{thm:threepoint}]
Let $f_2$ and $f_3$ satisfy the hypotheses in the theorem statement.
Because periodic packings come arbitrarily close to the optimal sphere packing density, it will suffice to bound the density of a packing obtained by centering spheres at the union of $m$ distinct translates $\Lambda + x_j$ with $1 \le j \le m$ of a lattice $\Lambda$ in $\R^n$. We can assume without loss of generality that the minimum distance between points in this packing is $r$, in which case we can use spheres of radius $r/2$ and the packing density is $m \vol(B_{r/2}^n)/\vol(\R^n/\Lambda)$.

For each $R \in [0,\infty]$, let $\mathcal{D}_R$ be the multiset consisting of the points $x+x_j-x_k$ with $x \in \Lambda$ and $1 \le j,k \le m$ such that $|x+x_j-x_k| \le R$ (with multiplicity given by the number of choices of $x$, $j$, and $k$ that yield this point); we could call $\mathcal{D}_R$ a truncated difference multiset for the packing.
Recall that the proof of the two-point bound in \cite{cohn2003new} uses the identity
\[
\sum_{u \in \mathcal{D}_\infty} f_2(u) = \frac{1}{\vol(\R^n/\Lambda)} \sum_{w \in \Lambda^*}  \left|\sum_{j=1}^m e^{2\pi i \langle w, x_j \rangle}\right|^2\widehat{f_2}(w),
\]
which follows from Poisson summation. (Strictly speaking, our hypotheses on $f_2$ are not strong enough to justify Poisson summation; we will assume for now that it holds, and deal with this technicality at the end of the proof.) The proof of the two-point bound amounts to observing that due to the inequalities on $f_2$ when $f_3=0$, the left side of the identity is bounded above by the $u=0$ term $m f_2(0)$, while the right side is bounded below by the $w=0$ term $m^2/\vol(\R^n/\Lambda)$. Thus, we obtain the upper bound $\vol(B_{r/2}^n) f_2(0)$ for the density $m \vol(B_{r/2}^n)/\vol(\R^n/\Lambda)$ when $f_3=0$. Furthermore, we always have
\begin{equation} \label{eq:f2ineq}
\sum_{u \in \mathcal{D}_R} f_2(u) \ge \frac{m^2}{\vol(\R^n/\Lambda)},
\end{equation}
regardless of whether $f_3=0$.

To adapt this proof to take $f_3$ into account, we will use the inequality
\begin{equation} \label{eq:f3pdk}
\sum_{j,k,\ell=1}^m \sum_{\substack{x,y \in \Lambda\\|x+x_j-x_k| \le R\\|y+x_j-x_\ell| \le R}} f_3(x+x_j-x_k,y+x_j-x_\ell) \ge 0,
\end{equation}
which holds because the sum for each fixed $j$ is nonnegative by \eqref{eq:pdkineq}.

Because $f_3(u,v) \le 0$ when $r \le |u|,|v| \le R$ and $|u-v| \ge r$, we can remove most of the terms from the sum in
\eqref{eq:f3pdk} and still have a nonnegative result. In particular, for $u = x+x_j-x_k$ and $v = y+x_j-x_\ell$, we have $|u-v| \ge r$ unless $u=v$. (Note that including the same term $x_j$ in both arguments of $f_3$ is needed for this conclusion.) Furthermore, $|u| \ge r$ unless $u=0$, and similarly $|v| \ge r$ unless $v=0$. Removing the terms for which we know $f_3(u,v) \le 0$ from \eqref{eq:f3pdk} therefore yields
\begin{equation} \label{eq:f3pdk2}
m f_3(0,0) + \sum_{u \in \mathcal{D}_R} (f_3(u,0) + f_3(0,u) + f_3(u,u)) \ge 0,
\end{equation}
where the coefficient of $m$ accounts for the $m$ ways to obtain $u=v=0$ (namely, $x=y=0$ and $m$ choices for the value of $j=k=\ell$).

Combining \eqref{eq:f3pdk2} with the two-point inequality \eqref{eq:f2ineq} yields
\[
f_2(0) + f_3(0,0) + \frac{1}{m} \sum_{u \in \mathcal{D}_R \setminus \{0\}} (f_2(u) + f_3(u,0) + f_3(0,u)+ f_3(u,u)) \ge \frac{m}{\vol(\R^n/\Lambda)}.
\]
All the terms on the left except $f_2(0)$ and $f_3(0,0)$ are nonpositive by assumption, and thus we obtain the desired upper bound.

The remaining technicality is how to justify \eqref{eq:f2ineq} using only continuity and integrability for $f_2$. If $f_2$ were smooth and rapidly decreasing, then Poisson summation would hold, but our hypotheses on $f_2$ are insufficient to justify it. Instead, we can mollify $f_2$ as follows. Let $\iota_\varepsilon$ be a nonnegative, smooth, even function of integral~$1$ with support in an $\varepsilon$-neighborhood of the origin, and let $f_{2,\varepsilon} = (f_2 * \iota_\varepsilon * \iota_\varepsilon) \widehat{\iota}_\varepsilon^{\,2}$, where $*$ denotes convolution and thus $\widehat{f}_{2,\varepsilon} = \iota_\varepsilon * \iota_\varepsilon * (\widehat{f}_2 \, \widehat{\iota}_\varepsilon^{\,2})$. Then $f_{2,\varepsilon}$ and $\widehat{f}_{2,\varepsilon}$ are smooth and rapidly decreasing, they converge pointwise to $f_2$ and $\widehat{f}_2$ as $\varepsilon \to 0$, and these functions satisfy $f_{2,\varepsilon}(x) \le 0$ whenever $|x| \ge R + 2\varepsilon$ and $\widehat{f}_{2,\varepsilon} \ge 0$ everywhere. Now Poisson summation for $f_{2,\varepsilon}$ implies that
\[
\sum_{u \in \mathcal{D}_{R + 2\varepsilon}} f_{2,\varepsilon}(u) \ge \frac{m^2}{\vol(\R^n/\Lambda)}\widehat{f}_{2,\varepsilon}(0),
\]
and we obtain \eqref{eq:f2ineq} in the limit as $\varepsilon \to 0$.
\end{proof}

A more abstract perspective on this proof is that each periodic packing yields a feasible point in the conic dual problem. See, for example, \cite{barvinok02} for background on conic programming.

\subsection{Derivation via three-point bounds in compact spaces}\label{3ptscale}

The bound from Theorem~\ref{thm:threepoint} can be strengthened by requiring that
\begin{equation} \label{eq:weaker}
f_3(x,y) + f_3(-x,y-x) + f_3(-y,x-y) \le 0,
\end{equation}
as opposed to 
\[ f_3(x,y) \le 0, \]
for $(x,y)$ such that $|x|,|y|,|x-y| \ge r$ and $|x|,|y| \le R$. The reason \eqref{eq:weaker} strengthens the bound is that the set \[\{(x,y) : \text{$|x|,|y|,|x-y| \in \{0\}\cup [r,\infty)$ and $|x|,|y| \le R$}\},\] is not closed under $(x,y) \mapsto (-x,y-x)$ or $(x,y) \mapsto (-y,x-y)$. Thus, 
\eqref{eq:weaker} involves terms that play no role in Theorem~\ref{thm:threepoint}; these terms could without loss of generality be set equal to zero, while allowing them to be nonzero gives additional flexibility.

In this section we show that the strengthened bound is still a valid upper bound and can be obtained as the limit of the three-point bound for compact spaces. The reason we use Theorem~\ref{thm:threepoint} for our computations is that for the parametrization of $f_2$ and $f_3$ that we use (and the degrees of polynomials for which we can perform computations), the nonstrengthened variant of the bound gives better results. In what follows, we take $r=2$ and $R = \infty$ without loss of generality (one can rescale the functions to achieve $r=2$, and when $R$ is finite the function $f_3$ can be extended by zero), so that we can reuse these variables.

In \cite{de2018k} de Laat, Machado, Oliveira, and Vallentin generalized the three-point bound of Bachoc and Vallentin \cite{bachoc2008new} for spherical codes to a $k$-point bound  for general compact topological packing graphs. Recall that the $k$-point bound is defined as follows for a compact topological packing graph $G$ with vertex set $V$. Let $I_k = I_{k}(G)$ denote the topological space of independent sets of at most $k$ vertices, and let $\mathcal{C}(X)$ be the space of continuous functions from a topological space $X$ to $\R$. We define the operator $B_k \colon \mathcal{C}(V^2 \times I_{k-2}) \to \mathcal{C}(I_{k} \setminus \{ \emptyset \})$ (not to be confused with the notation for a ball) by
\[ B_kT(S) = \sum_{\substack{J \subseteq S \\ |J| \le k-2}} \; \sum_{\substack{x,y \in S \\ \{x,y\} \cup J = S}} T(x,y,J). \]
The \emph{$k$-point bound $\Delta_k(G)$} for the size of the largest independent set in $G$ is the infimum of $M$ over all kernels $K$ in $\mathcal{C}(V^2 \times I_{k-2})$ and real numbers $M$ such that
\begin{enumerate}
\item $(x,y) \mapsto K(x,y,J)$ is positive definite for each fixed $J \in I_{k-2}$,
\item $B_k K(\{ x \}) \le M-1$ and $B_k K(\{ x,y \}) \le -2$ for $\{x, y\} \in I_2$ with $x \ne y$, and
\item for all independent sets $S$ with $|S| \ge 3$, we have $B_k K(S) \le 0$. 
\end{enumerate}
For $k = 2$ this is the Delsarte bound, and for $k=3$ it agrees with the Bachoc-Vallentin three-point bound, except for a $2 \times 2$ matrix in their formulation that does not seem to affect the bound numerically (see Section~5.2 in \cite{de2018k}).

For a compact subset $V$ of $\R^n$, we consider the underlying topological packing graph such that $(x,y)$ is an edge if and only if $0 < |x-y| < 2$. Then independent sets correspond to packings of unit spheres with centers in $V$, and $\Delta_k(V)$ is an upper bound for the maximal number of spheres in such a packing. We will need the following lemma, which is the $\Delta_k$ analogue of a special case of Theorem~3.8 in \cite{rescaling2021}.

\begin{lemma} \label{lemma:subadditive}
Let $V_1$ and $V_2$ be disjoint, compact subsets of $\R^n$.
Then $\Delta_k(V_1 \cup V_2) \le \Delta_k(V_1) + \Delta_k(V_2)$.
\end{lemma}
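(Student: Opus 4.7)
The plan is to witness the inequality by constructing, for any $\varepsilon > 0$, a feasible pair $(K, M)$ for the $\Delta_k$ program on $V_1 \cup V_2$ with $M \le \Delta_k(V_1) + \Delta_k(V_2) + \varepsilon$. Fix near-optimal feasible $(K_i, M_i)$ on $V_i$ with $M_i \le \Delta_k(V_i) + \varepsilon/2$. A key preliminary observation is that, because $V_1$ and $V_2$ are disjoint closed subsets of the compact space $V_1 \cup V_2$ and $d(V_1, V_2) > 0$, each is clopen in $V_1 \cup V_2$, so the indicators $\mathbf{1}_{V_i}$ are continuous there. This lets us glue continuous kernels supported on the individual $V_i$ together and build scalar factors that distinguish the two components.

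The natural first candidate is $\widetilde K := \widetilde K_1 + \widetilde K_2$, where $\widetilde K_i$ extends $K_i$ by $0$ whenever any argument (including an element of $J$) lies outside $V_i$. Each $\widetilde K_i(\cdot,\cdot,J)$ is still positive definite as a kernel in $(x,y)$, being a block-diagonal extension when $J \subseteq V_i$ and identically $0$ otherwise. A direct computation shows $B_k \widetilde K(S) = B_k K_i(S)$ when $S \subseteq V_i$, while $B_k \widetilde K(S) = 0$ whenever $S$ meets both $V_1$ and $V_2$, since any $(x,y,J)$ producing a nonzero $\widetilde K_i$ contribution satisfies $\{x,y\} \cup J \subseteq V_i$, which cannot equal a mixed $S$. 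With $M := M_1 + M_2$, the pair $(\widetilde K, M)$ then satisfies every constraint except the two-point constraint $B_k \widetilde K(\{x,y\}) \le -2$ for independent cross pairs $\{x,y\}$ with $x \in V_1$ and $y \in V_2$, where we get $0$ instead.

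The main obstacle, which I expect to be the crux of the proof, is repairing this cross-pair constraint by adding a correction $L$ that is positive definite in $(x,y)$ for each fixed $J$ and delivers at most $-2$ via $B_k$ on every independent cross pair, without disturbing the other constraints that are already tight for $\widetilde K$ in the same-component case. A first guess built from the continuous function $u := \mathbf{1}_{V_1} - \mathbf{1}_{V_2}$, say $L(x,y,J) = \lambda\,u(x)u(y)$, correctly drives $B_k L(\{x,y\})$ negative on cross pairs but simultaneously positive on same-component pairs, destroying the tight bound $B_k K_i(\{x,y\}) \le -2$. Two natural strategies to avoid this collision are (i) giving $L$ a carefully tuned dependence on $J$ using indicator factors such as $\mathbf{1}[J \subseteq V_1]$, chosen so that the contributions to $B_k L$ on same-component pairs cancel across the sum defining $B_k$; or (ii) first convex-combining each $K_i$ with a trivial feasible kernel to produce $O(\varepsilon)$ slack in the same-component two-point constraints, which a simple rank-one $L$ can then absorb at $O(\varepsilon)$ cost in the objective. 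Either route, after letting $\varepsilon \to 0$, yields a feasible $(K, M_1+M_2+O(\varepsilon))$ on $V_1 \cup V_2$ and establishes the lemma.
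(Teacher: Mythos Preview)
Your setup is correct: extending $K_1$ and $K_2$ by zero and summing handles every constraint except the two-point constraint on independent cross pairs $\{x,y\}$ with $x\in V_1$, $y\in V_2$, where you get $0$ rather than something at most $-2$. The gap is in the repair step.

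Strategy~(ii) does not work with the parameters you describe. With $L(x,y,\emptyset)=\lambda\,u(x)u(y)$ and $L=0$ for $J\neq\emptyset$, only $J=\emptyset$ contributes to $B_kL(\{x,y\})$, giving $2\lambda\,u(x)u(y)$: this equals $-2\lambda$ on cross pairs and $+2\lambda$ on same-component pairs. The cross-pair constraint forces $\lambda\ge 1$, so the same-component perturbation is at least $+2$, which no $O(\varepsilon)$ slack can absorb. Strategy~(i) runs into the same wall in product form, since positive definiteness of $(x,y)\mapsto L(x,y,J)$ forces the coefficient of $u(x)u(y)$ to be nonnegative for every $J$, precluding sign cancellation across the $B_k$ sum. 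In fact any positive definite correction that is constant on each block $V_i\times V_j$ has the form $\sum_{i,j}a_{ij}1_{V_i}(x)1_{V_j}(y)$ with $(a_{ij})\succeq 0$; getting $2a_{12}\le -2$ forces $a_{11}a_{22}\ge a_{12}^2\ge 1$, so the two same-component contributions cannot both be made small.

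The paper's resolution is to accept this and break the symmetry between $V_1$ and $V_2$. Take $u_\alpha=\alpha\,1_{V_1}-\alpha^{-1}1_{V_2}$ and $L_\alpha(x,y)=u_\alpha(x)u_\alpha(y)$ for $J=\emptyset$. On a cross pair this still contributes exactly $-2$, while on a $V_1$-pair it contributes $+2\alpha^2$ and on a $V_2$-pair $+2\alpha^{-2}$. Now compensate by scaling the extended kernels asymmetrically, replacing $K_1+K_2$ by $(\alpha^2+1)K_1+(\alpha^{-2}+1)K_2$: on a $V_1$-pair the two-point value becomes at most $-2(\alpha^2+1)+2\alpha^2=-2$, and similarly on a $V_2$-pair. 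The one-point bound becomes $(\alpha^2+1)M_1$ on $V_1$ and $(\alpha^{-2}+1)M_2$ on $V_2$; taking $\alpha=\sqrt{M_2/M_1}$ makes both equal to $M_1+M_2$, with no $\varepsilon$ needed.
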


\begin{proof}
We will adapt the proof of Theorem~3.8 in \cite{rescaling2021}. Suppose $K_1$ and $K_2$ are kernels for $V_1$ and $V_2$ respectively, with corresponding bounds $M_1$ and $M_2$. We can assume that $M_1$ and $M_2$ are positive (i.e., $V_1$ and $V_2$ are nonempty).

We begin by noting that for each $\alpha>0$, the function
\[
L_\alpha := \alpha^2 1_{V_1 \times V_1} + \alpha^{-2} 1_{V_2 \times V_2} - 1_{V_1 \times V_2} - 1_{V_2 \times V_1}
\]
on $\R^n \times \R^n$ is positive definite, because it is the tensor square of $\alpha 1_{V_1} - \alpha^{-1} 1_{V_2}$; here $1_S$ denotes the characteristic function of the subset $S$.

We can view $K_1$ and $K_2$ as elements of $\mathcal{C}((V_1 \cup V_2)^2 \times I_{k-2}(V_1 \cup V_2))$ by extending them by zero. Then we define $K \in \mathcal{C}((V_1 \cup V_2)^2 \times I_{k-2}(V_1 \cup V_2))$ by
\[
K(x,y,J) = (\alpha^2+1) K_1(x,y,J) + (\alpha^{-2}+1) K_2(x,y,J) + \begin{cases}
L_\alpha(x,y) & \text{if $J = \emptyset$, and }\\
0 & \text{otherwise.}
\end{cases}
\]
For each fixed $J$, the function $(x,y) \mapsto K(x,y,J)$ is positive definite, and direct calculation shows that
\begin{align*}
B_kK(\{x\}) &\le (\alpha^2+1)M_1-1 &&\text{if $x \in V_1$},\\
B_kK(\{x\}) &\le (\alpha^{-2}+1)M_2-1 &&\text{if $x \in V_2$},\\
B_kK(\{x,y\}) &\le -2 &&\text{if $x \ne y$, and}\\
B_kK(S) &\le 0 &&\text{if $|S| \ge 3$}.
\end{align*}
Taking $\alpha = \sqrt{M_2/M_1}$ achieves $(\alpha^2+1)M_1=(\alpha^{-2}+1)M_2=M_1+M_2$, as desired.
\end{proof}

We now adapt the definition of $\Delta_k$ to give a Euclidean $k$-point bound.  As with compact spaces, we can consider an underlying graph of $\mathbb{R}^n$ such that $(x,y)$ is an edge if and only if $0 < |x-y| < 2$, and we denote by $I_{k}$ the topological space of independent sets in this graph of size at most $k$.

\begin{definition}\label{def:kpoint}
The \emph{$k$-point sphere packing density bound} $\Delta_{k,\R^n}$ is the infimum of 
\[
\vol(B_{1}^n) B_k f(\{ 0 \})
\]
over all continuous functions $f \colon \R^n \times \R^n \times I_{k-2} \to \R$ satisfying 
\begin{enumerate}
    \item $f(x,y,J) = f(x+z, y+z, J+z)$ for all $x,y,z \in \R^n$ and $J \in I_{k-2}$, where $J+z = \{ j+z : j \in J \}$,
    \item $f(x,y,J)$ is positive definite in $x$ and $y$ for each fixed $J \in I_{k-2}$,
    \item $B_k f(S) \le 0$ for all $|S| \ge 2$, and
    \item $x \mapsto f(x,0,\emptyset)$ is integrable on $\R^n$, and $\int_{\R^n} f(x,0,\emptyset) \, dx = 1$.
\end{enumerate}
\end{definition}

We will first show that $\Delta_{k,\R^n}$ can be interpreted as a limit of $k$-point bounds on compact spaces (which also shows it gives valid sphere packing upper bounds), and then that $\Delta_{3,\R^n}$ is the strengthening of Theorem~\ref{thm:threepoint} discussed in the introduction of this section.

In what follows, for a subset $C \subseteq \mathbb{R}^n$ we let $rC = \{rx : x \in C\}$.

\begin{theorem} \label{thm:limit}
If $C$ is a compact, Jordan-measurable subset of $\mathbb{R}^n$ with nonzero volume, then $\Delta_{k,\R^n} \le \frac{\vol(B_{1}^n)}{\vol(C)}\Delta_k(C)$ and
\[
\Delta_{k, \mathbb{R}^n} = \lim_{r \to \infty} \frac{\vol(B_{1}^n)}{\vol(rC)}\Delta_k(rC).
\]
\end{theorem}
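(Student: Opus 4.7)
The plan is to prove the two assertions in turn. For the inequality $\Delta_{k,\R^n} \le \frac{\vol(B_1^n)}{\vol(C)}\Delta_k(C)$, I would convert any feasible kernel $K$ for $\Delta_k(C)$ with objective $M$ into a feasible translation-invariant $f$ for $\Delta_{k,\R^n}$ by translation averaging. First I would shift the $\emptyset$-component of $K$ by $1$: set $K^+(x,y,\emptyset) := K(x,y,\emptyset) + 1$ and $K^+(x,y,J) := K(x,y,J)$ for $J \ne \emptyset$. Because constants are positive definite kernels, $K^+$ remains positive definite in $(x,y)$ for each $J$, and a combinatorial check shows that the shift absorbs the ``$-1$ per point'' appearing in the compact constraints: $B_kK^+(\{x\})\le M$, $B_kK^+(\{x,y\})\le 0$ on independent pairs, and $B_kK^+(S)\le 0$ for $|S|\ge 3$. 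Now average over translations,
\[
f(x,y,J) := \frac{1}{\vol(C)}\int_{\R^n} \widetilde{K^+}(x-z,y-z,J-z)\,dz,
\]
with $\widetilde{K^+}$ denoting the extension of $K^+$ by zero outside $C^2\times I_{k-2}(C)$. This $f$ is translation invariant and positive definite in $(x,y)$, and the constraints pass through: for each $z$ the contribution to $B_kf(S)$ is either $B_kK^+(S-z)\le 0$ (when $S-z\subseteq C$) or zero, giving $B_kf(S)\le 0$ for $|S|\ge 2$ and $B_kf(\{0\})\le M$. A Fubini calculation gives $\int f(x,0,\emptyset)\,dx = \vol(C) + \langle 1_C, K(\cdot,\cdot,\emptyset)\,1_C\rangle/\vol(C) \ge \vol(C)$, using positive definiteness of $K(\cdot,\cdot,\emptyset)$, so dividing by this normalization yields a feasible $f'$ of value at most $\vol(B_1^n)M/\vol(C)$. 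The discontinuity of $\widetilde{K^+}$ across $\partial C$ is harmless: first mollify the indicator of $C$ and take a limit.

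For the limit identity, the direction $\Delta_{k,\R^n}\le\liminf_r \frac{\vol(B_1^n)}{\vol(rC)}\Delta_k(rC)$ follows at once from the first inequality applied to $rC$. For the reverse inequality, I would fix a cube $Q_s$ of side $s$, tile $\R^n$ by disjoint translates of $Q_s$, and cover $rC$ by the cubes it meets. Jordan measurability of $C$ ensures that the number of cubes used is $\vol(rC)/s^n + O(sr^{n-1})$, so Lemma~\ref{lemma:subadditive} together with translation invariance yields
\[
\frac{\Delta_k(rC)}{\vol(rC)} \le \frac{\Delta_k(Q_s)}{s^n}\bigl(1 + o_r(1)\bigr).
\]
Sending $r\to\infty$ first and then $s\to\infty$ (the latter limit existing by a Fekete argument using $\Delta_k(Q_{js})\le j^n\Delta_k(Q_s)$) bounds $\limsup_r \vol(B_1^n)\Delta_k(rC)/\vol(rC)$ by $L_{\mathrm{cube}} := \lim_s \vol(B_1^n)\Delta_k(Q_s)/s^n$.

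The final step is to identify $L_{\mathrm{cube}} = \Delta_{k,\R^n}$. The direction $L_{\mathrm{cube}} \ge \Delta_{k,\R^n}$ is the first inequality applied to $Q_s$, and the reverse inequality is the hardest part of the proof. Given a near-optimal feasible $f$ for $\Delta_{k,\R^n}$ with value $V$ close to $\Delta_{k,\R^n}$, I would try to produce a feasible $K_s$ on $Q_s$ of value at most $Vs^n/\vol(B_1^n) + o(s^n)$ by rescaling $f$ and adding a positive definite correction on $Q_s$ that tightens the $B_k(\{x,y\})$-constraint from ``$\le 0$'' in the Euclidean setting to the strict ``$\le -2$'' required in the compact setting. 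Achieving this tightening without losing positive definiteness is the reverse of the $K\mapsto K^+$ move used in the first part of the proof, and I expect it to be the main technical obstacle---likely requiring a careful boundary-layer construction so that the correction contributes only $o(s^n)$ to the objective.
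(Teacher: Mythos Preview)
Your argument for the first inequality is essentially the paper's: add $1$ to the $\emptyset$-component, extend by zero, and average over translations. (The averaging already produces a continuous $f$, so the mollification you mention is unnecessary, though harmless.)

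For the limit, your reduction of general $C$ to cubes via subadditivity parallels the paper's final step for Jordan-measurable $C$. But you correctly flag the hard direction---producing a feasible $K$ on a large cube from a feasible Euclidean $f$---as a gap, and your suggested ``boundary-layer construction'' is not how this is done. The paper's move is direct: on the inner region $(rC)_R = \{x\in rC : d(x,\partial rC)\ge R\}$, set
\[
K(x,y,\emptyset) = \frac{\vol(rC)}{1+\varepsilon}\,f(x,y,\emptyset) - 1, \qquad K(x,y,J) = \frac{\vol(rC)}{1+\varepsilon}\,f(x,y,J) \quad (J\ne\emptyset).
\]
Subtracting the constant $1$ immediately converts $B_kf\le 0$ into $B_kK(\{x,y\})\le -2$ and $B_kK(\{x\})\le M-1$. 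The genuine issue is that $(x,y)\mapsto K(x,y,\emptyset)$ must remain positive definite after one subtracts the rank-one kernel $1$. Here the normalization $\int_{\R^n} f(x,0,\emptyset)\,dx = 1$ (condition~(4) in Definition~\ref{def:kpoint}) does real work: for weights $w_x$ on a finite $S\subseteq (rC)_R$, apply positive definiteness of $f(\cdot,\cdot,\emptyset)$ to the signed measure
\[
\nu = \sum_{x\in S} w_x\,\delta_x \;-\; \frac{\sum_{x\in S} w_x}{\vol(rC)}\,\mu_{rC},
\]
where $\mu_{rC}$ is Lebesgue measure on $rC$. Expanding $\iint f(x,y,\emptyset)\,d\nu(x)\,d\nu(y)\ge 0$ and using the integral estimates $\frac{1}{\vol(rC)}\iint_{rC\times rC} f \ge 1-\varepsilon/2$ and $\int_{rC} f(x',y,\emptyset)\,dy \approx 1$ for $x'\in (rC)_R$ (the latter guaranteed by choosing $R$ large via the $L^1$ tail of $f(\cdot,0,\emptyset)$) yields exactly the needed positive-definiteness inequality for $K(\cdot,\cdot,\emptyset)$. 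This signed-measure trick is the key idea your outline is missing; no boundary-layer correction is required.
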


\begin{proof}
This proof will be a generalization of the two-point Delsarte case, which is proved in \cite[Section~4]{rescaling2021}.

Suppose first we are given a $k$-point function $K$ on $C$ that proves an upper bound of $M$ for $\Delta_k(C)$.  We begin by defining an intermediate kernel $\widetilde{f}$ on all of $\mathbb{R}^n$ that is not translation invariant, based on which we will then define a kernel that is translation invariant and is feasible for the Euclidean $k$-point bound.

We define $\widetilde{f} \colon \R^n \times \R^n \times I_{k-2} \to \R$ by
\[
\widetilde{f}(x,y,J) = \begin{cases} \frac{1}{\vol(C)} (K(x,y,\emptyset) + 1) & \text{if $J =\emptyset$ and $x,y \in C$,}\\
\frac{1}{\vol(C)} K(x,y,J) & \text{if $\emptyset\ne J \subseteq C$ and $x,y \in C$, and}\\
0 & \text{otherwise.}\end{cases}
\]
We note the following about this intermediate kernel:
\begin{enumerate}
\item $\widetilde{f}(x,y,J)$ is a positive definite kernel in $x$ and $y$ for any fixed $J \in I_{k-2}$,
\item $B_k \widetilde{f}(\{ x \}) \le M/\!\vol(C)$ for all $x$, 
\item $B_k \widetilde{f}(S) \le 0$ for $|S| \ge 2$, and
\item $\frac{1}{\vol(C)} \iint_{\mathbb{R}^n\times\mathbb{R}^n} \widetilde{f}(x,y, \emptyset) \, dx\, dy\ge 1$.
\end{enumerate}
The first assertion is immediate from the definition and the positive definiteness of $K$, the second and third assertions follow from the corresponding statements for $K$, and the fourth assertion follows from the positive definite property of $K$ applied to the uniform measure on $C$. On the other hand, $\widetilde{f}$ may not be continuous at the boundary of $C$.

Now we define our continuous, integrable, translation-invariant kernel $f$ by 
\[
f(x,y,J) = \frac{1}{\vol(C)} \int_{\mathbb{R}^n} \widetilde{f}(z+x, z+y, z+J)\, dz.
\]
This definition preserves the inequalities $B_k f( \{ x \} ) \le M/\!\vol(C)$ 
and $B_k f(S) \le 0$ for $|S| \ge 2$, and the fourth condition above implies that
\[
\int_{\R^n} f(x,0,\emptyset) \, dx \ge 1.
\]
(If it is greater than $1$, then we can obtain an even better bound by rescaling $f$.)
To check positive definiteness, note that given weights $w_x$ for $x$ in a finite subset $S$ of $\mathbb{R}^n$,
\[ \sum_{x,y \in S} w_x w_y f(x,y,J) = \frac{1}{\vol(C)}\int_{\mathbb{R}^n} \sum_{x,y \in S} w_x w_y \widetilde{f}(z+x, z+y, z+J) \, dz \ge 0. \]

All that remains is the limit as $r \to \infty$, for which it will suffice to show that
\[
\limsup_{r \to \infty} \frac{\vol(B_{1}^n)}{\vol(rC)}\Delta_k(rC) \le \Delta_{k, \mathbb{R}^n}.
\]
Suppose $f$ satisfies the hypotheses of Definition~\ref{def:kpoint}, and we wish to construct a scaling factor $r$ and a kernel $K$ on $rC$ that nearly matches the bound obtained from $f$.
Let $0 < \varepsilon<1$, and choose $R >0$ so that
\begin{equation}\label{eq:secondepsbound}
\int_{\R^n \setminus B_R^n(0)} |f(0,y,\emptyset)| \, dy \le \varepsilon/4.
\end{equation}
Then let $r>R$ be large enough that
\begin{equation}\label{eq:firstepsbound}
\frac{1}{\vol(rC)} \iint_{rC \times rC} f(x,y,\emptyset) \,dx \,dy \ge 1-\varepsilon/2.
\end{equation} 

Let $(rC)_R = \{x \in rC : d(x, \partial rC) \ge R\}$; in other words, $(rC)_R$ consists of the points of $rC$ that are at least a distance $R$ from the boundary. Since $C$ is assumed Jordan measurable, we note that for fixed $R$, \[\lim_{r \to \infty} \frac{\vol((rC)_R)}{\vol(rC)} = 1.\]  
We will construct kernels on $(rC)_R$, which will suffice to prove the limiting result since the ratio of volumes of $(rC)_R$ and $rC$ approaches $1$ as $r \to \infty$.

We define a kernel $K$ on $(rC)_R$ by
\[ K(x,y, J) = \begin{cases}
f(x,y,\emptyset)\vol(rC)/(1+\varepsilon) - 1 & \text{if $J=\emptyset$, and}\\
f(x,y,J)\vol(rC)/(1+\varepsilon) & \text{otherwise}.
\end{cases}
\]
This kernel is continuous, and $(x,y) \mapsto K(x,y,J)$ is by assumption positive definite when $J \ne \emptyset$. Further, the inequalities $B_kK(S) \le 0$ for $|S| \ge 3$, $B_k K(\{ x,y \}) \le -2$, and $B_k K(\{ x \}) \le B_kf(\{0\}) \vol(rC)/(1+\varepsilon) -1$ follow immediately from the hypotheses on $f$. Thus, $K$ satisfies the hypotheses for the $k$-point bound on $(rC)_R$ with $M = B_kf(\{0\}) \vol(rC)/(1+\varepsilon)$, as long as $(x,y) \mapsto K(x,y,\emptyset)$ is positive definite.

To check this final condition, let $S \subseteq (rC)_R$ be an arbitrary finite set with associated real weights $w_x$ for $x \in S$. We must show that
\begin{equation} \label{eq:posdefK}
\sum_{x,y \in S} w_x w_y f(x,y,\emptyset) - \frac{\left( \sum_{x \in S} w_x \right)^2}{\vol(rC)}(1+\varepsilon) \ge 0.
\end{equation}
To do so, let $\nu$ be defined as $\sum_x w_x \delta_x - \mu_r \sum_x w_x/\vol(rC)$, where $\mu_r$ is the Lebesgue measure on $rC$ and $\delta_x$ is the measure of mass $1$ supported on $\{x\}$.  Positive definiteness of $f$ shows that
\[\iint f(x,y, \emptyset) \, d\nu(x) \, d\nu(y) \ge 0,\]
and the left side of this inequality equals
\[
\begin{split}
\sum_{x,y \in S} w_x w_y f(x,y,\emptyset) &+ \frac{\left( \sum_x w_x \right)^2}{\vol(rC)^2} \iint_{rC \times rC} f(x,y,\emptyset) \,dx \,dy\\
&\phantom{}- 2 \frac{\sum_x w_x}{\vol(rC)} \sum_{x' \in S} w_{x'} \int_{rC} f(x',y, \emptyset) \,dy.
\end{split}
\]
Since $S \subseteq (rC)_R$, we can combine this with \eqref{eq:firstepsbound} and \eqref{eq:secondepsbound} to get \eqref{eq:posdefK}, as desired.

Thus, $K$ is a feasible solution for the $k$-point bound on $(rC)_R$ with $M = B_kf(\{0\}) \vol(rC)/(1+\varepsilon)$, and therefore
\begin{align*}
\Delta_k((rC)_R) &\le  B_kf(\{0\}) \frac{\vol(rC)}{1+\varepsilon}\\
&= \frac{\vol(B_{1}^n)B_k f(\{0\})}{\vol(B_{1}^n)} \vol((rC)_R) \frac{\vol(rC)}{(1+\varepsilon) \vol((rC)_R)}
\end{align*}
We can choose $f$ so that $\vol(B_{1}^n)B_k f(\{0\})$ is arbitrarily close to $\Delta_{k, \mathbb{R}^n}$. If $\varepsilon$ is chosen to be small enough and $r$ large enough, then the factor
\[
\frac{\vol(rC)}{(1+\varepsilon) \vol((rC)_R)}
\]
can be made arbitrarily close to $1$. We can therefore conclude that
\[
\limsup_{r \to \infty} \frac{\vol(B_{1}^n)}{\vol((rC)_R)}\Delta_k((rC)_R) \le \Delta_{k, \mathbb{R}^n}.
\]
That inequality is almost what we want, with the only difference being the removal of the points within distance $R$ of the boundary of $rC$. If $C$ is a unit ball centered at the origin, then $(rC)_R = (r-R)C$ and we are done, but more general sets $C$ require additional argument.

Suppose $C$ is a convex set, which we can assume contains the origin. Then $r(1-\varepsilon)C \subseteq (rC)_R$ for $r$ sufficiently large with $\varepsilon$ and $R$ fixed, and so $K$ also gives a bound on packings in $r(1-\varepsilon)C$. It follows that
\begin{align*}
\limsup_{r \to \infty} \frac{\vol(B_{1}^n)}{\vol(rC)}\Delta_k(rC) &= \limsup_{r \to \infty} \frac{\vol(B_{1}^n)}{\vol(r(1-\varepsilon)C)}\Delta_k(r(1-\varepsilon)C)\\
&= \limsup_{r \to \infty} \frac{\vol(B_{1}^n)}{\vol(rC)}(1-\varepsilon)^{-n} \Delta_k(r(1-\varepsilon)C)
\\
&\le (1-\varepsilon)^{-n}\Delta_{k, \mathbb{R}^n},
\end{align*}
and taking $\varepsilon \to 0$ proves the desired inequality when $C$ is convex.

Finally, we can prove the result for Jordan-measurable $C$ by approximating it with cubes. Given $\varepsilon>0$, let $C_1,\dots,C_N$ be disjoint, closed cubes such that $C \subseteq \bigcup_i C_i$ and $\sum_i \vol(C_i) \le \vol(C)(1+\varepsilon)$. Then Lemma~\ref{lemma:subadditive} implies that
\begin{align*}
\limsup_{r \to \infty} \frac{\vol(B_{1}^n)}{\vol(rC)}\Delta_k(rC) &\le 
\sum_{i=1}^N \limsup_{r \to \infty} \frac{\vol(B_{1}^n)}{\vol(rC)}\Delta_k(rC_i)\\
&=  \Delta_{k, \mathbb{R}^n} \sum_{i=1}^N \frac{\vol(C_i)}{\vol(C)}\\
&\le (1+\varepsilon) \Delta_{k, \mathbb{R}^n},
\end{align*}
and again we let $\varepsilon \to 0$.
\end{proof}

\begin{theorem}
The infimum in Theorem~\ref{thm:threepoint} with $f_3(x,y) \le 0$ replaced by \eqref{eq:weaker} is $\Delta_{3,\R^n}$.
\end{theorem}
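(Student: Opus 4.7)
My plan is to prove equality of infima by exhibiting an explicit correspondence between feasible solutions of the two formulations. Throughout I take $r=2$ and $R=\infty$ as in the setup of this subsection. In the direction from the strengthened Theorem~\ref{thm:threepoint} to $\Delta_{3,\R^n}$, given feasible $(f_2, f_3)$ I would build a translation-invariant kernel $f$ on $\R^n \times \R^n \times I_1$ by
\[
f(x,y,\emptyset) = f_2(y-x) \quad \text{and} \quad f(x,y,\{p\}) = f_3(x-p,\,y-p).
\]
Conversely, given feasible $f$ for $\Delta_{3,\R^n}$, I would set $f_2(u)=f(0,u,\emptyset)$ and take $f_3$ to be the central symmetrization
\[
f_3(x,y) = \tfrac{1}{2}\bigl(f(x,y,\{0\}) + f(-x,-y,\{0\})\bigr).
\]
Central symmetrization preserves positive definiteness, since $(x,y) \mapsto f(-x,-y,\{0\})$ is positive definite by reindexing the defining inequality, and it leaves the value at $(0,0)$ unchanged. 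Translation invariance, positive definiteness of each slice, integrability, and the normalization $\int_{\R^n} f(x,0,\emptyset)\,dx = \widehat{f_2}(0) = 1$ transfer straightforwardly, as does the objective via $B_3 f(\{0\}) = f(0,0,\emptyset) + f(0,0,\{0\}) = f_2(0) + f_3(0,0)$.

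The main content is verifying the inequality conditions. For an independent set $S$ with $|S| \ge 4$, one has $B_3 f(S) = 0$ automatically because $|\{x,y\} \cup J| \le 3$. A direct expansion of the remaining cases yields
\[
B_3 f(\{0,p\}) = 2f_2(p) + f_3(p,p) + f_3(-p,-p) + 2f_3(p,0) + 2f_3(-p,0)
\]
and
\[
B_3 f(\{0,p,q\}) = 2\bigl(f_3(p,q) + f_3(-p,q-p) + f_3(-q,p-q)\bigr),
\]
using $f_3(x,y) = f_3(y,x)$, which is forced by positive definiteness. The three-point inequality $B_3 f(\{0,p,q\}) \le 0$ thus matches \eqref{eq:weaker} exactly up to a factor of $2$, and it transfers under central symmetrization by averaging the inequalities at $(p,q)$ and $(-p,-q)$. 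For the two-point condition, in the Theorem-to-$\Delta_{3,\R^n}$ direction I would sum the strengthened Theorem's condition~(a) at $p$ and at $-p$ to obtain $B_3 f(\{0,p\}) \le 0$. In the reverse direction, central symmetry forces $f_3(-p,0) = f_3(p,0)$ and $f_3(-p,-p) = f_3(p,p)$, which collapses $B_3 f(\{0,p\}) \le 0$ to $f_2(p) + f_3(p,p) + 2f_3(p,0) \le 0$, matching condition~(a).

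The delicate step, and the main conceptual obstacle, is understanding why central symmetrization is needed in one direction but not the other. The strengthened Theorem's pair condition is strictly stronger than the $\Delta_{3,\R^n}$ pair condition for a general $f_3$, but the two become equivalent once $f_3$ is centrally symmetric. This loses nothing for the $\Delta_{3,\R^n}$ optimum because the involution $(x,y,J) \mapsto (-x,-y,-J)$ is a symmetry of the feasible set and of the objective, so every feasible $f$ may be replaced by its symmetrization at no cost. Once this point is seen, both inequalities between the infima are immediate, and the two formulations yield the same bound.
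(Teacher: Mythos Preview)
Your approach is essentially the same as the paper's: both identify $f_2(x)=f(x,0,\emptyset)$ and $f_3(x,y)=f(x,y,\{0\})$ and exploit the symmetry $(x,y,J)\mapsto(-x,-y,-J)$, which the paper invokes as ``without loss of generality $f$ is even'' and you implement as explicit central symmetrization. Your expansions of $B_3f$ on two- and three-element sets are correct and more detailed than what the paper writes out, and your observation that the pair condition in the strengthened Theorem~\ref{thm:threepoint} is genuinely stronger than $B_3f(\{0,p\})\le 0$ unless $f_3$ is centrally symmetric is exactly the reason the paper passes to even $f$ first.

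There is, however, one technical point you have not addressed and which the paper flags explicitly. Theorem~\ref{thm:threepoint} imposes no continuity hypothesis on $f_3$, whereas Definition~\ref{def:kpoint} requires $f$ to be continuous on $\R^n\times\R^n\times I_1$. Your correspondence therefore only shows that $\Delta_{3,\R^n}$ equals the infimum in the strengthened Theorem~\ref{thm:threepoint} \emph{restricted to continuous $f_3$}; it does not rule out that allowing discontinuous $f_3$ lowers the infimum strictly below $\Delta_{3,\R^n}$. The paper closes this gap by appealing to \cite[Lemma~5.6]{rescaling2021} and the sandwich-function strategy of \cite[Theorem~6.13]{rescaling2021}, arguing that the bound with possibly discontinuous $f_3$ sits between the topological and discrete versions of the three-point bound and that these coincide. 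This is not a triviality, and your proposal should at least acknowledge the issue.
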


\begin{proof}
Let $f$ be feasible for $\Delta_{3,\R^n}$. Without loss of generality, we can assume that $f$ is even, i.e., $f(-x,-y,-J)=f(x,y,J)$, since we can replace $f$ with $(x,y,J) \mapsto (f(x,y,J)+f(-x,-y,-J))/2$. Let $f_2(x) = f(x,0,\emptyset)$ and $f_3(x,y) = f(x,y,\{0\})$. Then the inequality $B_3f(S) \le 0$ for the set $S = \{0,x\}$ is equivalent to 
\[
f_2(x) + f_3(x,0) + f_3(0,x) + f_3(x,x)\leq 0 \text{ for $|x| \geq 2$}
\] 
and for $S = \{0,x,y\}$ it is equivalent to \eqref{eq:weaker}. By translation invariance we can always assume $0 \in S$, and so the inequalities in Definition~\ref{def:kpoint} are equivalent to those in Theorem~\ref{thm:threepoint} with  $f_3(x,y) \le 0$ replaced by \eqref{eq:weaker} and with $r=2$ and $R = \infty$, which we can assume without loss of generality.

The only discrepancy is that Theorem~\ref{thm:threepoint} allows $f_3$ to be discontinuous, while Definition~\ref{def:kpoint} does not.
Using the machinery from \cite[Lemma~5.6]{rescaling2021}, one can show that the continuity assumption in Theorem~\ref{thm:threepoint} does not affect the bound. The bound using a continuous $f_2$ and possibly discontinuous $f_3$ sits in between the topological and discrete versions of the three-point bound.  One can show that the discrete $k$-point bound is a sandwich function and can use the same strategy as in \cite[Theorem~6.13]{rescaling2021} to show the necessary inequalities to satisfy the hypotheses of \cite[Lemma~5.6]{rescaling2021}.
\end{proof}

\subsection{Reduction to spherical code bounds}

As evidence that our notion of the three-point bound for sphere packing is the right one, we can relate $k$-point bounds on the sphere with $k$-point bounds on Euclidean space through a generalization of \cite[Theorem~3.4]{cohn2014sphere}.

\begin{theorem}
Let $\pi/3 \le \theta \le \pi$, and let $\Delta_k(S^{n-1}, \theta)$ be the $k$-point bound for spherical codes in $S^{n-1} \subseteq \R^n$ such that any two points must be separated by angle at least $\theta$.  Then
\[ \Delta_{k, \mathbb{R}^n} \le \sin^n(\theta/2) \Delta_k(S^{n-1}, \theta). \]
\end{theorem}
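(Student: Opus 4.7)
The plan is to combine Theorem~\ref{thm:limit} with a radial pullback construction. Let $r_0 = 1/\sin(\theta/2)$, and for each small $\varepsilon \in (0, r_0)$ consider the closed annulus $C_\varepsilon = \{x \in \R^n : \varepsilon \le |x| \le r_0\}$. This $C_\varepsilon$ is compact, Jordan-measurable, and has volume $(r_0^n - \varepsilon^n)\vol(B_1^n)$, so Theorem~\ref{thm:limit} yields
\[
\Delta_{k,\R^n} \le \frac{\vol(B_1^n)}{\vol(C_\varepsilon)} \Delta_k(C_\varepsilon) = \frac{\Delta_k(C_\varepsilon)}{r_0^n - \varepsilon^n}.
\]
Passing to the limit $\varepsilon \to 0^+$ will produce the desired factor $\sin^n(\theta/2) = 1/r_0^n$, so it suffices to establish $\Delta_k(C_\varepsilon) \le \Delta_k(S^{n-1}, \theta)$ uniformly in $\varepsilon$.

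To prove this, I would take any kernel $K$ feasible for $\Delta_k(S^{n-1}, \theta)$ with bound $M$ and pull it back to $C_\varepsilon$ by radial projection. Writing $p(x) = x/|x|$, set
\[
K'(x,y,J) = K(p(x), p(y), p(J))
\]
for $x, y \in C_\varepsilon$ and $J \in I_{k-2}(C_\varepsilon)$, where $p(J) = \{p(j) : j \in J\}$. Since $p$ is continuous on $C_\varepsilon$ (the origin having been excluded), $K'$ is continuous, and the positive definiteness of $(x,y) \mapsto K'(x,y,J)$ follows from that of $K(\cdot,\cdot,p(J))$ by aggregating weights along the fibers of $p$.

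The key geometric ingredient, and the reason the hypothesis $\theta \ge \pi/3$ is needed, is that radial projection carries the packing graph on $C_\varepsilon$ into the spherical code graph with angular separation $\theta$. For $x, y \in C_\varepsilon$ with $a = |x|$, $b = |y|$, and $\alpha$ the angle between them, $|x-y|^2 = a^2 + b^2 - 2ab\cos\alpha$, so $\alpha < \theta$ would force $|x-y|^2 < a^2 + b^2 - 2ab\cos\theta$. A direct optimization on $[0, r_0]^2$ shows this upper bound is maximized at $a = b = r_0$ with value $4r_0^2\sin^2(\theta/2) = 4$, where the relevant boundary comparison $r_0^2 \le 4r_0^2\sin^2(\theta/2)$ uses exactly $\theta \ge \pi/3$ (equivalently $r_0 \le 2$). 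Hence $|x - y| \ge 2$ implies $\alpha \ge \theta$, and an analogous estimate along a radial line shows that $p$ is injective on every independent set in $C_\varepsilon$. Consequently $S \mapsto p(S)$ is a size-preserving bijection between independent sets in $C_\varepsilon$ and independent sets in $(S^{n-1}, \theta)$, the operator $B_k$ commutes with this bijection, and the three feasibility inequalities $B_kK'(\{x\}) \le M-1$, $B_kK'(\{x,y\}) \le -2$, and $B_kK'(S) \le 0$ transfer from the corresponding inequalities for $K$.

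The main obstacle is the geometric lemma above, together with the choice to work on the annulus rather than the full ball $B_{r_0}^n$: passing to $C_\varepsilon$ sidesteps the singularity of $p$ at the origin at the cost of the harmless final limit $\varepsilon \to 0^+$, and the hypothesis $\theta \ge \pi/3$ enters precisely through the bound $r_0 \le 2$ that makes the radial projection admissible.
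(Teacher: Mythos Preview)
Your argument is correct and takes a genuinely different route from the paper. Both proofs rest on the same geometric fact---radial projection from the ball of radius $r_0 = 1/\sin(\theta/2)$ sends Euclidean packings to $\theta$-separated spherical codes, which is Lemma~2.2 of \cite{cohn2014sphere}---but they package it differently. The paper works directly at the level of Definition~\ref{def:kpoint}: it pulls the spherical kernel $K$ back to the punctured ball, adds the constant~$1$ when $J=\emptyset$, and then averages over translations by hand to produce a translation-invariant $f$ feasible for $\Delta_{k,\R^n}$, checking the normalization $\int f(x,0,\emptyset)\,dx \ge \vol(B_{r_0}^n)^2$ via an explicit double-integral computation. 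You instead invoke Theorem~\ref{thm:limit} as a black box, so that all of the translation-averaging has already been absorbed, and reduce the problem to the purely compact comparison $\Delta_k(C_\varepsilon) \le \Delta_k(S^{n-1},\theta)$, for which the pullback $K'(x,y,J) = K(p(x),p(y),p(J))$ is immediately feasible once one checks that $p$ is injective on independent sets and carries them to spherical independent sets. Your route is more modular and arguably cleaner, at the small cost of excising the annulus and taking $\varepsilon \to 0^+$ to dodge the singularity of $p$ at the origin; the paper's direct construction avoids that limit but essentially reproduces machinery already contained in the proof of Theorem~\ref{thm:limit}. One phrasing nit: your ``size-preserving bijection between independent sets'' overstates what is true and what you need---the map on independent sets is neither globally injective nor surjective; what you actually use (and have established) is that for each independent $S \subseteq C_\varepsilon$ the restriction $p|_S$ is injective and $p(S)$ is independent in the spherical graph, which is exactly what makes $B_kK'(S) = B_kK(p(S))$.
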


\begin{proof}
We begin with a transformation from independent sets in $\R^n$ (i.e., sets with minimal distance at least~$2$) to those in $S^{n-1}$ (sets with minimal angle at least~$\theta$) via rescaling.
Let $R = 1/\sin(\theta/2)$. By Lemma~2.2 in \cite{cohn2014sphere}, if $J$ is an independent subset of $\R^n$ and $J \subseteq B_R^n(0) \setminus \{0\}$, then the set
\[
\widetilde{J} = \bigg\{ \frac{x}{|x|} : x \in J\bigg\}
\]
is an independent subset of $S^{n-1}$.

Given a kernel $K$ that proves an upper bound of $M$ for the $k$-point bound on $S^{n-1}$ with minimal angle $\theta$, let
\[
F(x,y,J) = K\bigg(\frac{x}{|x|},\frac{y}{|y|}, \widetilde{J}\bigg) + \begin{cases} 1 & \text{if $J = \emptyset$, and }\\
0 & \text{otherwise}
\end{cases}
\]
whenever $J$ is an independent subset of $\R^n$ and $\{x,y\} \cup J \subseteq B_r^n(0) \setminus\{0\}$.
Then we can define a function $f \colon \R^n \times \R^n \times I_{k-2} \to \R^n$ for the $k$-point sphere packing bound by
\[
f(x,y,J) = \int_{A_{\{x,y\} \cup J}} F(x-z,y-z,J-z) \, dz,
\]
where $A_S = \{z \in \R^n : S\subseteq B_R^n(z) \setminus\{z\}\}$.

By construction, $f$ is translation-invariant, and the positive definiteness of $(x,y) \mapsto f(x,y,J)$ for each $J$ follows from the analogous property for $K$. Furthermore, for each nonempty independent subset $S$ of $\R^n$ of size at most $k$,
\[
B_kf(S) = \int_{A_S} \big(B_kK\big(\widetilde{S-z}\big) + c_S\big) \, dz,
\]
where
\[
c_S = \begin{cases} 1 & \text{if $c=1$,}\\
2 & \text{if $c=2$, and}\\
0 & \text{otherwise,}
\end{cases}
\]
and therefore $B_kf(S) \le 0$ for $|S| \ge 2$, while
\[
B_kf(\{0\}) = M \vol(B_R^n).
\]

Finally, we can write
\[
\int_{\R^n} f(x,0,\emptyset) \, dx = \int_{\R^n} \int_{B_R^n(x) \cap B_R^n(0) \setminus \{0,x\}} 1 + K\bigg(\frac{x-z}{|x-z|},\frac{-z}{|z|},\emptyset\bigg) \, dz\, dx.
\]
This integral is finite because the integrand is bounded, as is the domain of integration. The contribution from $K$ is nonnegative, and therefore
\begin{align*}
\int_{\R^n} f(x,0,\emptyset) \, dx &\ge \int_{\R^n} \int_{B_R^n(x) \cap B_R^n(0) \setminus \{0,x\}} dz\, dx\\
&= \vol(\{(x,z) \in \R^n \times \R^n : |z| \le R \text{ and } |x-z| \le R\})\\
&= \vol(B_R^n)^2.
\end{align*}
Rescaling by a factor of $\vol(B_R^n)^2$ therefore yields a function satisfying the conditions in Definition~\ref{def:kpoint} and proving that
\[
\Delta_{k,\R^n} \le \frac{\vol(B_1^n) \vol(B_R^n) M}{\vol(B_R^n)^2} = \sin^n(\theta/2) M,
\]
as desired.
\end{proof}

\section{Duality theory for linear programming bounds}\label{sec:duality}

In our study of the three-point lattice bound, we will use dual bounds for infinite-dimensional linear programs. This theory is subtle, because unlike the finite-dimensional case, there can be a duality gap between the optimal primal and dual bounds. In this section, we will show that there is no duality gap in the linear programs that arise in our work. One consequence of this analysis is that the ordinary linear programming bound has no duality gap, which was conjectured in \cite[Section~4]{cohn2002new} but has not previously been proved.

Let $C$ be a closed subset of $\R^n$. The \emph{$C$-constrained linear programming bound} is the infimum of $f(0)$ over all Schwartz functions $f \colon \R^n \to \R$ satisfying $f(x) \le 0$ for all $x \in C$, $\widehat{f}(y) \ge 0$ for all $y$, and $\widehat{f}(0)=1$. Aside from the restriction to Schwartz functions, which we will address in Proposition~\ref{prop:mollify}, the ordinary linear programming bound for sphere packing is the case $C = \{x \in \R^n : |x| \ge 1\}$, and the three-point lattice bound and the disjunctive product bound from Section~\ref{sec:lattice} are also special cases. Furthermore, when $\R^n \setminus C$ is a convex body that is symmetric with respect to the origin, $f(0) \vol(\R^n \setminus C)/2^n$ is an upper bound for the packing density in $\R^n$ using translates of $\R^n \setminus C$ (see Theorem~B.1 in \cite{cohn2003new}). We will assume $0 \not\in C$, since otherwise no such $f$ exists. Conversely, a feasible $f$ always exists when $0 \not\in C$, for example by rescaling the input variable to a feasible $f$ in the linear programming bound for sphere packing.

The \emph{dual $C$-constrained linear programming bound} is the supremum over all real numbers $c$ such that there exists a tempered distribution $T$ on $\R^n$ that satisfies $T = \delta_0 + \mu$ with $\mu \ge 0$, $\supp(\mu) \subseteq C$, and $\widehat{T} \ge c \delta_0$. For all auxiliary functions $f$ as above and distributions $T$,
\[
f(0) \ge \int T f = \int \widehat{T} \widehat{f} \ge c,
\]
and we say there is no duality gap if the infimum of $f(0)$ and the supremum of $c$ are equal. Note that we make no assertion as to whether the infimum and supremum are attained.

\begin{theorem} \label{theorem:nodualitygap}
For every closed subset $C$ of $\R^n$ with $0 \not\in C$, the $C$-constrained linear programming bound has no duality gap.
\end{theorem}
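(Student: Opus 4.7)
My plan is to prove strong duality by a Hahn--Banach separation argument in the Schwartz space $\mathcal{S}(\R^n)$, whose continuous dual is the space of tempered distributions where the dual variable $T$ lives. Weak duality $d^* \le p^*$ is already in the text, so the task is to construct, for every $\varepsilon > 0$, a dual-feasible pair $(T, c)$ with $c \ge p^* - \varepsilon$.

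First I would recast the primal as a conic program. Let $\mathcal{K} := \{f \in \mathcal{S}(\R^n) : f|_C \le 0 \text{ and } \widehat{f} \ge 0\}$; this is a closed convex cone. By positive homogeneity of $\mathcal{K}$, the primal value $p^*$ is the largest real $c$ such that $f(0) - c\widehat{f}(0) \ge 0$ holds for every $f \in \mathcal{K}$ --- equivalently, the tempered distribution $\Lambda := \delta_0 - p^* \mathbf{1}$, where $\mathbf{1}$ denotes the constant function $1$ as a tempered distribution (so that $\langle \mathbf{1}, f\rangle = \widehat{f}(0)$), lies in the dual cone $\mathcal{K}^*$.

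Second I would describe $\mathcal{K}^*$ via the bipolar theorem. Writing $\mathcal{K} = \mathcal{K}_1 \cap \mathcal{K}_2$ with $\mathcal{K}_1 = \{f : f|_C \le 0\}$ and $\mathcal{K}_2 = \{f : \widehat{f} \ge 0\}$, direct computation shows $\mathcal{K}_1^* = \{-\mu : \mu \ge 0,\ \supp\mu \subseteq C\}$ (positive tempered distributions supported in $C$) and $\mathcal{K}_2^* = \{\nu : \widehat{\nu} \ge 0\}$, so $\mathcal{K}^* = \overline{\mathcal{K}_1^* + \mathcal{K}_2^*}$ in the weak-$*$ topology. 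Hence $\Lambda$ is a weak-$*$ limit of approximations $-\mu_\alpha + \nu_\alpha$ with $\mu_\alpha \ge 0$ supported in $C$ and $\widehat{\nu_\alpha} \ge 0$. Setting $T_\alpha := \delta_0 + \mu_\alpha$ and taking Fourier transforms in $\Lambda \approx -\mu_\alpha + \nu_\alpha$, one obtains $\widehat{T_\alpha} - p^* \delta_0 \approx \widehat{\nu_\alpha} \ge 0$; with an $\varepsilon$-slack this yields a dual-feasible pair with value $\ge p^* - \varepsilon$.

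The main obstacle is precisely this closure step: $\mathcal{K}_1^* + \mathcal{K}_2^*$ need not be weak-$*$ closed, so the components $\mu_\alpha$ and $\nu_\alpha$ may diverge individually while only their difference converges. I expect to control this by exploiting the hypothesis $0 \notin C$ --- which provides a positive distance between $\supp\mu$ and the $\delta_0$ contribution, allowing any pathology concentrated near the origin to be absorbed into the $\delta_0$ piece of $T$ --- together with an $\varepsilon$-relaxation of the positivity constraint $\widehat{T_\alpha} - c\delta_0 \ge 0$. An alternative route, parallel to the compactification argument in Section~\ref{sec:truncation}, would truncate $C$ to $C \cap B_R^n(0)$, invoke a strong duality statement on the truncated problem (where compactness makes Slater-type conditions easier to arrange), and then let $R \to \infty$; the mollification machinery of Proposition~\ref{prop:mollify} then bridges the Schwartz primal and more general auxiliary function spaces.
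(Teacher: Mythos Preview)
Your framework is sound and, at the level of ``use Hahn--Banach separation in $\mathcal{S}(\R^n)$ to produce a dual distribution,'' it matches the paper's approach. You have also correctly located the entire difficulty: the closure in $\mathcal{K}^* = \overline{\mathcal{K}_1^* + \mathcal{K}_2^*}$. But you have not resolved that difficulty, and the two fixes you sketch do not work as stated.

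Concretely: knowing that $\Lambda = \delta_0 - p^*\mathbf{1}$ is a weak-$*$ limit of $-\mu_\alpha + \nu_\alpha$ gives you \emph{no} control on $\mu_\alpha$ and $\nu_\alpha$ separately. The error $(\delta_0 + \mu_\alpha) - \nu_\alpha - p^*\mathbf{1}$ tends to zero weak-$*$, but after taking Fourier transforms this error can be an arbitrary tempered distribution tending to zero weak-$*$; there is no reason it should be dominated by (or absorbable into) a multiple of $\delta_0$. The hypothesis $0\notin C$ separates $\supp\mu_\alpha$ from the origin in physical space, but the obstruction lives on the Fourier side, where $\widehat{\mu_\alpha}$ is spread out and no such separation is available. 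Your alternative route via truncation to $C\cap B_R^n(0)$ just relocates the problem: you would still need a strong-duality statement for the truncated program, and ``compactness makes Slater-type conditions easier'' is not an argument---the primal cone $\mathcal{K}$ has empty interior in $\mathcal{S}(\R^n)$ regardless of whether $C$ is compact, so standard Slater conditions do not apply.

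The paper closes exactly this gap by a concrete analytic construction rather than an abstract closedness argument. It works on the primal side: given a sequence $f_j$ that is only \emph{approximately} feasible (i.e., $f_j|_C \le a_j$ and $\widehat{f_j} \ge b_j$ with $a_j,b_j\to 0$ in $\mathcal{S}$), it builds explicit Schwartz corrections $g_j\to 0$ such that $f_j+g_j$ is \emph{exactly} feasible. The key tool (Lemmas~\ref{lemma:envelope} and~\ref{lemma:Schwartzconvergence}) is a Schwartz envelope: for any rapidly decreasing $g$ one can find a Schwartz $h\ge |g|$ with $\supp\widehat{h}$ compact, and this can be done continuously. This is what lets one pass from the closure $K$ back to $K_0$, and it is the substantive content of the proof. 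Your proposal contains nothing playing this role.
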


To prove this theorem, we will adapt a finite-dimensional proof based on hyperplane separation. The main technical obstacle will be showing that the point used for hyperplane separation lies outside of the closure of the convex set encoding the $C$-constrained linear programming bound.

Let $S(\R^n)$ be the Schwartz space on $\R^n$, with the usual topology defined by the Schwartz seminorms, and let $K$ be the closure $\overline{K_0}$ of the convex subset $K_0$ of $S(\R^n) \times S(\R^n)$ given by
\[
\begin{split}
K_0 &= \{(a,b) : \text{there exists $f \in S(\R^n)$ with $\widehat{f}(0)=1$ such that}\\
& \qquad \text{$a(x) \ge f(x)$ for all $x \in C \cup \{0\}$ and $\widehat{f}(y) \ge b(y)$ for all $y \in \R^n$}\}.
\end{split}
\]
This convex set encodes the $C$-constrained linear programming bound in terms of the points $(g,0)$, where $g \in S(\R^n)$ satisfies $g|_C = 0$. If such a point $(g,0)$ lies in $K_0$, then the function $f$ from the definition of $K_0$ is an auxiliary function satisfying $f(0) \le g(0)$.

To prove Theorem~\ref{theorem:nodualitygap}, let $c$ be any real number that is strictly less than the infimum in the $C$-constrained linear programming bound. Then our goal is to construct a tempered distribution $T$ satisfying $T = \delta_0 + \mu$ with $\mu \ge 0$, $\supp(\mu) \subseteq C$, and $\widehat{T} \ge c' \delta_0$ with $c' \ge c$.

Let $g$ be any Schwartz function on $\R^n$ such that $g(0)=c$ and $g|_C = 0$. We have seen that then $(g,0) \not\in K_0$. In fact, we will show that it is not even contained in the closure $K$ of $K_0$. This assertion is the most technical part of the proof. Before checking it, we will first show how to deduce Theorem~\ref{theorem:nodualitygap} from it.

\begin{proof}[Proof of Theorem~\ref{theorem:nodualitygap} assuming $(g,0) \not\in K$]
By hyperplane separation (for example, Theorem~III.3.4 in \cite{barvinok02}), the point $(g,0)$ is separated from $K$ by a closed hyperplane. In other words, there exist tempered distributions $T_1$ and $T_2$ such that for all $(a,b) \in K$,
\[
\int T_1 a + \int T_2 b > \int T_1 g.
\]
Because this inequality holds regardless of what $a$ does in $\R^n \setminus (C \cup \{0\})$, we must have $\supp(T_1) \subseteq \{0\} \cup C$. Furthermore, increasing $a$ or decreasing $b$ does not affect the inequality, and so we must have $T_1 \ge 0$ and $T_2 \le 0$. In particular, $T_1 = \lambda \delta_0 + \mu$ and $T_2 = -\nu$ for some real number $\lambda\ge0$ and nonnegative Radon measures $\mu$ and $\nu$ on $\R^n$ with $\supp(\mu) \subseteq C$. In other words,
\[
\lambda a(0) + \int a \, d\mu - \int b \, d\nu > \lambda c
\]
for all $(a,b) \in K$.

First, we will check that $\lambda$ cannot be zero. If it were, then we would have
\[
\int f \, d\mu > \int \widehat{f} \, d\nu
\]
whenever $f \in S(\R^n)$ satisfies $\widehat{f}(0)=1$, because $(f,\widehat{f}) \in K$. However, that is impossible, since it would rule out the existence of feasible auxiliary functions for the $C$-constrained linear programming bound. Thus, $\lambda > 0$, and by rescaling we can assume that $\lambda = 1$.

Now let $T = \delta_0 + \mu$. Then for $f \in S(\R^n)$,
\begin{equation} \label{eq:Thatineq}
\int (\widehat{T}-\nu) \widehat{f} > c
\end{equation}
whenever $\widehat{f}(0)=1$. Because \eqref{eq:Thatineq} holds regardless of what $\widehat{f}$ does away from the origin, $\widehat{T}-\nu$ must be supported only at the origin, so it is a finite linear combination of derivatives of $\delta_0$. The only way such a linear combination can satisfy \eqref{eq:Thatineq} is if $\widehat{T}-\nu = c' \delta_0$ with $c' > c$, as desired.
\end{proof}

All that remains is to prove that $(g,0) \not\in K$. Suppose instead that a sequence of points $(a_j,b_j) \in K_0$ converges to $(g,0)$ as $j \to \infty$. Each point $(a_j,b_j)$ has a corresponding function $f_j \in S(\R^n)$ satisfying $f_j(x) \le a_j(x)$ for $x \in C \cup \{0\}$, $\widehat{f}_j(y) \ge b_j(y)$ for all $y \in \R^n$, and $\widehat{f}(0)=1$. We can think of these functions $f_j$ as feasible points in a perturbed version of the $C$-constrained linear programming bound, where instead of the inequalities $f_j(x) \le 0$ and $\widehat{f}_j(y) \ge 0$, we perturb the right sides by functions $a_j$ and $b_j$. What we must verify is a form of continuity, namely that these perturbations do not change the optimum in the limit as $j \to \infty$. We will complete the proof with Lemma~\ref{lemma:closure} below.

To analyze the perturbations, we study Schwartz envelopes of rapidly decreasing functions, by using small modifications of the constructions in \cite{Garrett} and \cite[Lemma~18]{Francis} to prove Lemma~\ref{lemma:envelope} below.
We call a function $g \colon \R^n \to \R$ \emph{rapidly decreasing} if $\sup_{x \in \R^n} |x|^k |g(x)| < \infty$ for all integers $k \ge 0$, and we say that a sequence $g_1,g_2,\dots$ of rapidly decreasing functions converges to zero if
\[
\lim_{j \to \infty} \sup_{x \in \R^n} |x|^k |g_j(x)| = 0
\]
for each integer $k \ge 0$. Note that this notion of convergence differs from the Schwartz topology, because it does not take derivatives into account. When we say a sequence of Schwartz functions converges, we will always mean it converges in the Schwartz topology, while this notion will be reserved for functions that are not assumed to be Schwartz functions.

\begin{lemma} \label{lemma:envelope}
For every rapidly decreasing function $g \colon \R^n \to \R$, there exists a Schwartz function $f \colon \R^n \to \R$ such that $f(x) \ge |g(x)|$ for all $x \in \R^n$ and $\supp(\widehat{f}) \subseteq B_1^n(0)$. Furthermore, this construction can be done continuously in the appropriate topologies. In other words, given a sequence $g_1,g_2,\dots$ of rapidly decreasing functions with $g_j \to 0$,
the corresponding functions $f_j$ can be chosen so that $f_j \to 0$ in the Schwartz topology.
\end{lemma}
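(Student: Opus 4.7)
The plan is to produce $f$ as a convergent series $f(x) = \sum_j c_j \varphi(x - x_j)$ of translates of a single fixed nonnegative Schwartz function $\varphi$ with $\supp(\widehat{\varphi}) \subseteq B_1^n(0)$, where the coefficients $c_j$ are chosen to capture the local size of $|g|$ near $x_j$. Since translation preserves the support of the Fourier transform, the resulting $f$ will automatically satisfy $\supp(\widehat{f}) \subseteq B_1^n(0)$, while the dependence on $g$ will be continuous because $c_j$ depends continuously on $g$ in the rapidly decreasing topology.

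For the auxiliary function I would take $\varphi = (\widehat{\psi})^2$ for a smooth, real, even bump $\psi \in C_c^\infty(\R^n)$ supported in $B_{1/2}^n(0)$ with $\int \psi = 1$: this makes $\varphi \ge 0$, $\varphi(0) = 1$, and $\widehat{\varphi} = \psi * \psi$ supported in $B_1^n(0)$. Pick $\rho > 0$ so that $\varphi \ge 1/2$ on $B_\rho^n(0)$, and let $\{x_j\}$ be a lattice with covering radius less than $\rho$, so that $\R^n = \bigcup_j B_\rho^n(x_j)$. For the given rapidly decreasing $g$, set
\[
c_j = 2 \sup_{y \in B_\rho^n(x_j)} |g(y)|,
\]
which decays faster than any polynomial in $|x_j|$ because $g$ does. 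Then for every $x$, picking any $j$ with $x \in B_\rho^n(x_j)$ gives $f(x) \ge c_j \varphi(x - x_j) \ge |g(x)|$ from that single term (all others being nonnegative), while termwise Fourier transform produces $\widehat{f}(y) = \widehat{\varphi}(y) \sum_j c_j e^{-2\pi i \langle y, x_j\rangle}$, which inherits the support of $\widehat{\varphi}$.

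The remaining verifications are that $f$ lies in $S(\R^n)$ and that the construction is continuous. For the Schwartz property I would estimate $|x|^\beta |\partial^\alpha f(x)|$ termwise, using the splitting $|x|^\beta \le (1 + |x_j|)^\beta (1 + |x - x_j|)^\beta$ to distribute the polynomial factor between the coefficient $(1 + |x_j|)^\beta c_j$ (summable over the lattice by rapid decay of $c_j$) and the Schwartz decay of $\partial^\alpha \varphi$. The main technical obstacle is continuity: given $g_k \to 0$ in the rapidly decreasing sense, I must show that the corresponding $f_k$ tend to $0$ in the Schwartz topology. For this I would extract from $\sup_k \sup_x |x|^N |g_k(x)| < \infty$ (valid for every $N$) a single rapidly decreasing majorant $C_j$ with $c_j^{(k)} \le C_j$ uniformly in $k$, and then feed $c_j^{(k)} \to 0$ for each fixed $j$ into the seminorm estimate above via dominated convergence.
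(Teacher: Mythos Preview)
Your proposal is correct, but it takes a different route from the paper's proof. The paper builds the envelope via convolution: it first replaces $|g|$ by the radial, radially nonincreasing majorant $G(y)=\sup_{|x|\ge |y|-1}|g(x)|$, and then sets $f=(G*h)/(\varepsilon\,\vol(B^n_\varepsilon))$ for a fixed nonnegative Schwartz function $h$ with $\supp(\widehat h)\subseteq B_1^n(0)$ and $h\ge\varepsilon$ on $B^n_\varepsilon(0)$. The Schwartz property and Schwartz-topology continuity are then nearly automatic, since $\partial^\alpha f = G*(\partial^\alpha h)/(\varepsilon\,\vol(B^n_\varepsilon))$ and the map $g\mapsto G$ is continuous for the rapidly decreasing topology. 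Your construction is the discrete analogue of this: instead of integrating $G(y)h(x-y)$ over $y\in\R^n$, you sum $c_j\,\varphi(x-x_j)$ over a lattice, with $c_j$ playing the role of a sampled $G$. What the convolution approach buys is that the seminorm estimates and the continuity step are one-liners (derivatives fall on $h$, and $G_j\to 0$ gives $G_j*\partial^\alpha h\to 0$ directly); your approach is equally valid but requires the explicit splitting $(1+|x|)^\beta\le(1+|x_j|)^\beta(1+|x-x_j|)^\beta$ and the dominated-convergence argument on the lattice that you outline. Both yield the same conclusion, and your plan contains all the ingredients needed to carry it through.
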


\begin{proof}
Define $G \colon \R^n \to \R$ by
\[
G(y) = \sup_{|x| \ge |y|-1} |g(x)|.
\]
Then $G$ is measurable because it is radially decreasing, and it is rapidly decreasing because
\[
\begin{split}
\sup_y |y|^k |g(y)| &= \sup_y |y|^k \sup_{|x| \ge |y|-1} |g(x)|\\
&\le \sup_x (1+|x|)^k |g(x)|\\
&\le \sum_{j=0}^k \binom{k}{j} \sup_x |x|^j |g(x)| < \infty.
\end{split}
\]

Now let $h \colon \R^n \to \R$ be a Schwartz function such that $h(x) \ge 0$ for all $x$, $h(0)>0$, and $\supp(\widehat{h}) \subseteq B_1^n(0)$. For example, we can take $h = \widehat{\varphi}^2$, where $\varphi \colon \R^n \to \R$ is a smooth, nonnegative function with $\supp(\varphi) \subseteq B^n_{1/2}(0)$ (and such that $\varphi$ is not identically zero). Given such a function $h$, let $\varepsilon \in (0,1]$ be such that $h(x) \ge \varepsilon$ whenever $|x| \le \varepsilon$, and let $f = (G * h)/(\varepsilon \vol(B^n_\varepsilon(0)))$. Then $f$ is rapidly decreasing because $G$ and $h$ are rapidly decreasing, and the same holds for all derivatives of $f$ because they are convolutions of $G/(\varepsilon \vol(B^n_\varepsilon(0)))$ with the same derivatives of $h$. Thus, $f$ is a Schwartz function. For all $x \in \R^n$,
\[
f(x) = \frac{\int_{\R^n} G(y) h(x-y) \, dy}{\varepsilon \vol(B^n_\varepsilon(0))} \ge
 \frac{\int_{B_\varepsilon^n(x)} G(y) h(x-y) \, dy}{\varepsilon \vol(B^n_\varepsilon(0))} \ge |g(x)|,
\]
because $G(y) \ge |g(x)|$ and $h(x-y) \ge \varepsilon$ whenever $|x-y| \le \varepsilon$. Finally, $\widehat{f} = \widehat{G} \, \widehat{h} / (\varepsilon \vol(B^n_\varepsilon(0)))$, and so $\supp(\widehat{f}) \subseteq \supp(\widehat{h}) \subseteq B^n_1(0)$.

All that remains is to check continuity. Given a sequence $g_1,g_2,\dots$ of functions, we set $G_j(y) = \sup_{|x| \ge |y|-1} |g_j(x)|$, and then $g_j \to 0$ implies $G_j \to 0$. If we take $f_j = (G_j * h)/(\varepsilon \vol(B^n_\varepsilon(0)))$, then the derivatives of $f_j$ all converge to $0$ in the topology on rapidly decreasing functions, and thus $f_j \to 0$ in the Schwartz topology.
\end{proof}

\begin{lemma} \label{lemma:Schwartzconvergence}
For all rapidly decreasing functions $g,h \colon \R^n \to \R$, there exists a Schwartz function $f \colon \R^n \to \R$ such that $f(x) \le -|g(x)|$ for $|x| \ge 1$ and $\widehat{f}(y) \ge |h(y)|$ for all $y$.
Furthermore, this construction can be done continuously: given a sequence of rapidly decreasing functions $g_j,h_j$ with $g_j \to 0$ and $h_j \to 0$, 
the corresponding functions $f_j$ can be chosen so that $f_j \to 0$ in the Schwartz topology.
\end{lemma}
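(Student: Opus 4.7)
The plan is to decompose $f = -F + f_2$ into two Schwartz pieces built from Lemma~\ref{lemma:envelope}: one piece $F$ with $F \ge |g|$ that handles the spatial decay bound, and one piece $f_2$ that is spatially supported in $B_1^n(0)$ (so it contributes nothing to the spatial bound on $|x| \ge 1$) but whose Fourier transform can be made arbitrarily large to dominate $|h|$ on the frequency side.

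Concretely, first I would apply Lemma~\ref{lemma:envelope} to the rapidly decreasing function $|g|$ to obtain a Schwartz $F$ with $F \ge |g|$ pointwise and $\supp(\widehat F) \subseteq B_1^n(0)$. Then $\widehat F$ is a compactly supported smooth function, in particular rapidly decreasing, so $|h| + |\widehat F|$ is rapidly decreasing as well. A second application of Lemma~\ref{lemma:envelope}, this time to $|h| + |\widehat F|$, produces a Schwartz function $H$ with $H \ge |h| + |\widehat F|$ and $\supp(\widehat H) \subseteq B_1^n(0)$. Set $f_2(x) = \widehat H(-x)$, which is Schwartz with $\supp(f_2) \subseteq B_1^n(0)$ and $\widehat{f_2} = H$ by Fourier inversion. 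Take $f = -F + f_2$. For $|x| \ge 1$ the $f_2$ contribution vanishes, so $f(x) = -F(x) \le -|g(x)|$; on the frequency side $\widehat f(y) = -\widehat F(y) + H(y) \ge -|\widehat F(y)| + |h(y)| + |\widehat F(y)| = |h(y)|$, as required.

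For the continuity clause, assume $g_j, h_j \to 0$ in the rapidly decreasing sense. The second half of Lemma~\ref{lemma:envelope} applied to $|g_j|$ yields $F_j \to 0$ in the Schwartz topology; continuity of the Fourier transform on $S(\R^n)$ gives $\widehat{F_j} \to 0$ in Schwartz, and in particular $|\widehat{F_j}| \to 0$ in the rapidly decreasing sense. Thus $|h_j| + |\widehat{F_j}| \to 0$ rapidly, so a second application of Lemma~\ref{lemma:envelope} produces $H_j \to 0$ in Schwartz, and the reflected inverse Fourier transform $f_{2,j}(x) = \widehat{H_j}(-x)$ goes to zero in Schwartz as well. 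Therefore $f_j = -F_j + f_{2,j} \to 0$ in the Schwartz topology. The main subtlety to navigate is balancing the two constraints simultaneously: the second envelope $H$ must be chosen large enough to absorb the potentially uncontrolled values of $\widehat F$ on $B_1^n(0)$, which is what forces the two-stage use of Lemma~\ref{lemma:envelope}, but this absorption does not disturb the limit because the first envelope already converges in Schwartz and that convergence propagates through the Fourier transform into the input of the second envelope.
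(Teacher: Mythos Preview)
Your proof is correct and follows essentially the same two-stage construction as the paper: the paper's $g_0$ is your $-F$ (the negated envelope of $|g|$), and the paper's $h_0$ is your $f_2$ (the inverse Fourier transform of a second envelope built over $|h|+|\widehat{g}_0|$), giving $f=g_0+h_0=-F+f_2$. Your continuity argument also matches, just spelled out in more detail than the paper's one-line proof.
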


\begin{proof}
By Lemma~\ref{lemma:envelope}, there exist Schwartz functions $g_0,h_0 \colon \R^n \to \R$ such that $g_0(x) \le -|g(x)|$ for all $x$, $\widehat{h}_0(y) \ge |h(y)|+|\widehat{g}_0(y)|$ for all $y$, and $\supp(h_0) \subseteq B^n_1(0)$. Then we can take $f = g_0 + h_0$.
\end{proof}

\begin{lemma} \label{lemma:closure}
Let $C$ be a closed subset of $\R^n$ not containing $0$, let $c$ be strictly less than the infimum in the $C$-constrained linear programming bound, and let $g \in S(\R^n)$ satisfy $g(0)=c$ and $g|_{C} = 0$. Then $(g,0) \not\in K$.
\end{lemma}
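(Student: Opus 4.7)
My plan is to argue by contradiction: assume $(g,0) \in K$, so that some sequence $(a_j, b_j) \in K_0$ converges to $(g,0)$ in the product Schwartz topology, and then use the ``approximately feasible'' Schwartz witnesses to membership in $K_0$ to build genuinely feasible test functions $\tilde F_j$ for the $C$-constrained linear programming bound with $\tilde F_j(0) \to c$. Since every feasible test function $F$ satisfies $F(0) \ge \inf > c$, this will give the desired contradiction.

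First I would unwind the definition of $K_0$ to extract, for each $j$, a Schwartz function $f_j$ with $\widehat{f_j}(0)=1$, $f_j(x) \le a_j(x)$ for $x \in C \cup \{0\}$, and $\widehat{f_j}(y) \ge b_j(y)$ for all $y$. Writing $\varepsilon_j := a_j - g$ and $\delta_j := -b_j$, both sequences tend to zero in the Schwartz topology. Because $g|_C = 0$ and $g(0) = c$, the function $f_j$ is feasible for the $C$-constrained LP up to the small errors $f_j(x) \le \varepsilon_j(x)$ on $C$ (instead of $\le 0$), $\widehat{f_j}(y) \ge -\delta_j(y)$ (instead of $\ge 0$), and $f_j(0) \le c + \varepsilon_j(0)$.

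The heart of the argument is to cancel both defects simultaneously with a single Schwartz correction $h_j$. Since $C$ is closed with $0 \notin C$, pick $r > 0$ with $C \subseteq \{x : |x| \ge r\}$. Adapting Lemma~\ref{lemma:Schwartzconvergence} by dilating the band-limited auxiliary function of Lemma~\ref{lemma:envelope} (so that the space-side support of the correction shrinks from $B_1^n(0)$ to $B_r^n(0)$), I would obtain Schwartz functions $h_j$ satisfying $h_j(x) \le -|\varepsilon_j(x)|$ for $|x| \ge r$, $\widehat{h_j}(y) \ge |\delta_j(y)|$ for all $y$, and $h_j \to 0$ in the Schwartz topology via the continuity assertion of Lemma~\ref{lemma:Schwartzconvergence}. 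Setting $F_j := f_j + h_j$ then makes $F_j|_C \le 0$ and $\widehat{F_j} \ge 0$ everywhere, with $\widehat{F_j}(0) = 1 + \widehat{h_j}(0) \to 1$.

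For $j$ large enough $\widehat{F_j}(0) > 0$, so $\tilde F_j := F_j/\widehat{F_j}(0)$ is a bona fide feasible Schwartz function for the $C$-constrained LP, and
\[
\tilde F_j(0) \le \frac{c + \varepsilon_j(0) + h_j(0)}{1 + \widehat{h_j}(0)} \longrightarrow c,
\]
contradicting $c < \inf$. The step I expect to be the main obstacle is arranging $h_j$ so that it simultaneously cancels the space-side defect $\varepsilon_j$ on $C$ and the frequency-side defect $\delta_j$ on all of $\R^n$ while still shrinking in the Schwartz topology; this is precisely what Lemma~\ref{lemma:Schwartzconvergence} is engineered for, once its unit distance is replaced by $r$ via the harmless dilation described above.
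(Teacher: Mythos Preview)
Your proposal is correct and follows essentially the same approach as the paper's proof: both assume $(g,0)\in K$, extract the near-feasible Schwartz witnesses $f_j$, and then invoke Lemma~\ref{lemma:Schwartzconvergence} to produce a correction term that absorbs the space-side and frequency-side defects simultaneously, yielding genuinely feasible functions whose values at $0$ approach $c$. The only cosmetic differences are that you introduce $\varepsilon_j = a_j - g$ explicitly and handle the radius $r$ by dilating the auxiliary function (the paper rescales $\R^n$ so that $r=1$), and you normalize by $\widehat{F_j}(0)$ explicitly whereas the paper leaves this implicit in the final contradiction.
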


\begin{proof}
Without loss of generality, we can assume that $C \subseteq \{x \in \R^n : |x| \ge 1\}$, by rescaling $\R^n$.
Now suppose $(g,0) \in K$. That means there exist sequences $a_j, b_j, f_j$ of Schwartz functions on $\R^n$ such that $a_j(0) \ge f_j(0)$, $a_j(x) \ge f_j(x)$ for $x \in C$, $\widehat{f}_j(y) \ge b_j(y)$ for all $y$, and $\widehat{f}_j(0) = 1$, with $a_j \to g$ and $b_j \to 0$ in the Schwartz topology. By Lemma~\ref{lemma:Schwartzconvergence}, there exist Schwartz functions $g_j \colon \R^n \to \R$ such that $g_j(x) \le -|a_j(x)|$ for $x \in C$, $\widehat{g}_j(y) \ge |b_j(y)|$ for all $y$, and $g_j \to 0$ in the Schwartz topology. Now let $F_j = f_j + g_j$. Then $F_j$ is a Schwartz function such that $F_j(x) \le 0$ for $x \in C$, $\widehat{F}_j(y) \ge 0$ for all $y$, $\widehat{F}_j(0) \to 1$ as $j \to \infty$, and $\liminf_{j \to \infty} F_j(0) = \liminf_{j \to \infty} f_j(0) \le c$, which contradicts the hypothesis that $c$ is strictly less than the infimum in the linear programming bound.
\end{proof}

Now that we have a proof of strong duality, we can deduce that continuous, integrable functions do no better than Schwartz functions in the $C$-constrained linear programming bound:

\begin{proposition} \label{prop:mollify}
Suppose $T$ is a tempered distribution on $\R^n$ such that $T = \delta_0+\mu$ with $\mu \ge 0$ and $\widehat{T} = c \delta_0 + \nu$ with $\nu \ge 0$, and suppose $f \colon \R^n \to \R$ is a continuous, integrable function such that $f(x) \le 0$ for $x \in \supp(\mu)$, $\widehat{f}(y) \ge 0$ for all $y$, and $\widehat{f}(0)=1$. Then $f(0) \ge c$.
\end{proposition}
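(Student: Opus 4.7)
The plan is to prove $f(0) \geq c$ by applying the tempered Parseval identity to Schwartz approximations of $f$ and passing to the limit via monotone and dominated convergence. For any Schwartz $\phi$ with $\widehat \phi \geq 0$, pairing $T$ with $\phi$ and using the decomposition $\widehat T = c\delta_0 + \nu$ with $\nu \geq 0$ gives
\[
\phi(0) + \int \phi \, d\mu = c\,\widehat \phi(0) + \int \widehat \phi(-y)\, d\nu(y) \geq c\,\widehat \phi(0),
\]
so the task reduces to realizing $f$ as a suitable limit of Schwartz functions with the required sign properties.

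For the approximation I would set $F_{\varepsilon,\delta}(x) = (f * g_\delta)(x)\, \phi_\varepsilon(x)$, where $g_\delta(x) = \delta^{-n/2} e^{-\pi|x|^2/\delta}$ and $\phi_\varepsilon(x) = e^{-\varepsilon \pi |x|^2}$ are Gaussians. Both $g_\delta$ and $\phi_\varepsilon$ have nonnegative Fourier transforms, so $\widehat{F_{\varepsilon,\delta}} = (\widehat f \, \widehat{g_\delta}) * \widehat{\phi_\varepsilon}$ is a convolution of nonnegative integrable functions and is thus nonnegative. Convolution with $g_\delta$ makes $f * g_\delta$ smooth with bounded derivatives, while multiplication by the Gaussian $\phi_\varepsilon$ imposes rapid decay, so $F_{\varepsilon,\delta}$ lies in $S(\R^n)$; applying the Schwartz inequality above produces
\[
F_{\varepsilon,\delta}(0) + \int F_{\varepsilon,\delta}\, d\mu \geq c\,\widehat{F_{\varepsilon,\delta}}(0).
\]

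With $\varepsilon > 0$ fixed I would first send $\delta \to 0$. Pointwise $F_{\varepsilon,\delta}(x) \to f(x)\phi_\varepsilon(x)$ by continuity of $f$, and $|F_{\varepsilon,\delta}| \leq \|f\|_\infty \phi_\varepsilon$; here boundedness of $f$ follows from Fourier inversion applied to the integrable nonnegative function $\widehat f$, which yields $|f(x)| \leq \int \widehat f(y)\,dy = f(0)$. The dominating function $\|f\|_\infty \phi_\varepsilon$ is Schwartz and hence $\mu$-integrable (since $\mu$ is tempered), so dominated convergence gives
\[
f(0) + \int f\phi_\varepsilon \, d\mu \geq c \int f\phi_\varepsilon\, dx.
\]

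Finally, letting $\varepsilon \to 0$, the right-hand side tends to $c\,\widehat f(0) = c$ by dominated convergence with the integrable bound $|f|$, and on the left $f\phi_\varepsilon \leq 0$ on $\supp \mu$ with $|f\phi_\varepsilon|$ increasing monotonically to $|f|$ as $\varepsilon \downarrow 0$, so monotone convergence gives $\int f\phi_\varepsilon \, d\mu \to \int f\,d\mu \in [-\infty, 0]$. The limit inequality $f(0) + \int f \, d\mu \geq c$ together with $\int f\,d\mu \leq 0$ delivers $f(0) \geq c$ and incidentally forces $\int f\,d\mu$ to be finite (otherwise the inequality would fail). The main obstacle I anticipate is the control of $\int F_{\varepsilon,\delta}\, d\mu$ as $\delta \to 0$, which demands a dominating function that is both Schwartz (for $\mu$-integrability) and independent of $\delta$; this is precisely the role of the factor $\phi_\varepsilon$ in the definition of $F_{\varepsilon,\delta}$, since a straight mollification $f * g_\delta$ would not be $\mu$-integrable when $\mu$ is not a finite measure.
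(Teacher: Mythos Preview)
Your proof is correct and follows essentially the same two-parameter mollification strategy as the paper: both arguments construct Schwartz approximations of $f$ via one convolution and one pointwise multiplication (you use Gaussians and convolve-then-multiply, the paper uses compactly supported bumps and multiplies-then-convolves), apply the Parseval identity for tempered distributions, and pass to the limit in two stages. One minor simplification at the end: you can drop the nonpositive term $\int f\phi_\varepsilon\,d\mu$ \emph{before} sending $\varepsilon\to 0$, as the paper does, which avoids the monotone-convergence detour and any concern about whether $\int f\,d\mu$ is finite.
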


\begin{proof}
Let $\varphi \colon \R^n \to \R$ be a smooth, even, nonnegative function with $\varphi \ge 0$, $\widehat{\varphi} \ge 0$, $\supp(\varphi) \subseteq B^n_1(0)$, and $\widehat{\varphi}(0)=1$, and for $\varepsilon>0$ let $\varphi_\varepsilon(x) = \varepsilon^{-n} \varphi(x/\varepsilon)$, so that $\widehat{\varphi}_\varepsilon(y) = \widehat{\varphi}(\varepsilon y)$. Then $\varphi_\varepsilon$ forms an approximate identity, i.e., $g * \varphi_\varepsilon \to g$ pointwise as $\varepsilon \to 0$ for every continuous function $g \colon \R^n \to \R$.

Because $f$ is positive definite, $|f|$ is bounded above by $f(0)$. It follows that $f \cdot \widehat{\varphi}_{\varepsilon_1}$ is rapidly decreasing and $h := (f \cdot \widehat{\varphi}_{\varepsilon_1}) * \varphi_{\varepsilon_2}$ is a Schwartz function for every $\varepsilon_1, \varepsilon_2 > 0$.

Every nonnegative tempered distribution is a Borel measure for which $x \mapsto 1/(1+|x|)^k$ is integrable for some $k$, by Chapter~I, \S4, Theorem~V and Chapter~7, \S4, Theorem~VII in \cite{Schwartz}. Thus, $f \cdot \widehat{\varphi}_{\varepsilon_1}$ is integrable with respect to $\mu$.

By the Plancherel theorem,
\[
\int_{\R^n} T h = \int_{\R^n} \widehat{T} \,\widehat{h};
\]
equivalently,
\begin{equation} \label{eq:plancherel}
h(0) + \int h \, d\mu = c\widehat{h}(0) + \int \widehat{h} \, d\nu.
\end{equation}
Because $\widehat{h} = (\widehat{f} * \varphi_{\varepsilon_1}) \cdot \widehat{\varphi}_{\varepsilon_2} \ge 0$, the right side of \eqref{eq:plancherel} is greater than or equal to $c\widehat{h}(0) = c(\widehat{f} * \varphi_{\varepsilon_1})(0)$. The left side of \eqref{eq:plancherel} converges to $f(0) + \int f \cdot \widehat{\varphi}_{\varepsilon_1} \, d\mu$ as $\varepsilon_2 \to 0$ by dominated convergence, since the functions $\varphi_{\varepsilon_2}$ form an approximate identity. (Note that domination holds because $f \cdot \widehat{\varphi}_{\varepsilon_1}$ is rapidly decreasing and $h(x) = ((f \cdot \widehat{\varphi}_{\varepsilon_1}) * \varphi_{\varepsilon_2})(x)$ is a weighted average of $(f \cdot \widehat{\varphi}_{\varepsilon_1})(y)$ over $y \in B^n_{\varepsilon_2}(x)$.) Thus,
\[
f(0) + \int f \cdot \widehat{\varphi}_{\varepsilon_1} \, d\mu \ge c(\widehat{f} * \varphi_{\varepsilon_1})(0).
\]
Now the left side is at most $f(0)$ because $f(x) \le 0$ for $x \in \supp(\mu)$, while the right side converges to $c \widehat{f}(0) = c$ as $\varepsilon_1 \to 0$, again because the functions $\varphi_{\varepsilon_1}$ form an approximate identity. We conclude that $f(0) \ge c$, as desired.
\end{proof}

For the ordinary linear programming bound for sphere packing, one can convert a continuous, integrable auxiliary function to a rapidly decreasing function via direct mollification (see, for example, Section~4 in \cite{cohn2002new}). However, for $C$-constrained linear programming bounds, and in particular the three-point lattice bound, we do not know how to carry out a direct mollification. Fortunately, Proposition~\ref{prop:mollify} provides a substitute.

\begin{proposition} \label{prop:dualattain}
Let $C$ be a closed subset of $\R^n$ with $0 \not\in C$. Then the supremum in the dual $C$-constrained linear programming bound is attained by a tempered distribution.
\end{proposition}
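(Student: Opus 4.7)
The plan is to take a maximizing sequence of dual-feasible distributions and extract a weak-$*$ convergent subsequence in $S'(\R^n)$ whose limit attains the supremum $c^*$. Specifically, choose $c_j \to c^*$ with corresponding feasible $T_j = \delta_0 + \mu_j$, where $\mu_j \geq 0$, $\supp \mu_j \subseteq C$, and $\widehat{T}_j = c_j \delta_0 + \nu_j$ with $\nu_j \geq 0$. Because $S(\R^n)$ is a separable Fr\'echet space, the sequential Banach--Alaoglu theorem produces a weak-$*$ convergent subsequence of $\{T_j\}$ as soon as $\sup_j |\langle T_j, \phi\rangle| < \infty$ for every Schwartz function $\phi$.

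The main obstacle is establishing this pointwise boundedness, since a priori the measures $\mu_j$ could concentrate mass where the available primal test functions decay too rapidly. The plan is to exploit the envelope construction of Lemma~\ref{lemma:Schwartzconvergence}: after rescaling so that $C \subseteq \{|x| \geq 1\}$, given any Schwartz $\phi$, produce an even Schwartz $g$ with $g(x) \leq -|\phi(x)|$ for $|x| \geq 1$ and $\widehat{g} \geq 0$ everywhere. Plancherel then yields
\[
g(0) + \int_C g \, d\mu_j = c_j \widehat{g}(0) + \int \widehat{g}\, d\nu_j,
\]
and since the rightmost integral is nonnegative we obtain
\[
\int_C |\phi|\, d\mu_j \leq \int_C (-g) \, d\mu_j \leq g(0) - c_j \widehat{g}(0).
\]
The right-hand side is bounded uniformly in $j$ because $c_j$ is bounded, so $|\langle T_j, \phi\rangle| \leq |\phi(0)| + \int_C |\phi|\, d\mu_j$ is uniformly bounded, as desired.

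Once a subsequence converges weak-$*$, say $T_j \to T$, the weak-$*$ continuity of the Fourier transform on $S'(\R^n)$ gives $\widehat{T}_j \to \widehat{T}$. Setting $\mu := T - \delta_0$ and $\nu := \widehat{T} - c^* \delta_0$, feasibility follows by routine limits on nonnegative Schwartz test functions $\psi$: $\langle \mu, \psi\rangle = \lim_j \langle \mu_j, \psi\rangle \geq 0$, and $\langle \nu, \psi\rangle = \lim_j \langle \nu_j, \psi\rangle + \lim_j (c_j - c^*) \psi(0) \geq 0$, while $\supp \mu \subseteq C$ holds because each $\mu_j$ annihilates test functions supported away from $C$. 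Hence $T$ is dual feasible with value $c^*$, attaining the supremum.
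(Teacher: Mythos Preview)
Your proof is correct and follows the same compactness strategy as the paper: take a maximizing sequence, extract a weak-$*$ convergent subsequence in $S'(\R^n)$ via Banach--Alaoglu, and check that the limit is feasible with value $c^*$. The paper's version, however, is actually incomplete as written: it introduces the polar set $U^\circ$ of the neighborhood $U = \{f \in S(\R^n) : \sup_x (1+|x|^2)^{(n+1)/2}|f(x)| < 1\}$ and observes its sequential compactness, but stops short of showing that the $T_j$ lie in a fixed multiple of $U^\circ$ and never verifies feasibility of the limit. Your use of Lemma~\ref{lemma:Schwartzconvergence} to produce, for each Schwartz $\phi$, an even $g$ with $g \le -|\phi|$ on $\{|x|\ge 1\}$ and $\widehat g \ge 0$, together with the identity $\langle T_j, g\rangle = \langle \widehat{T}_j, \widehat g\rangle$, supplies precisely the uniform bound $\int_C |\phi|\,d\mu_j \le g(0) - c_j\,\widehat g(0)$ that the paper omits; your feasibility check for the limit is routine. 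So your argument is essentially the intended one, with the missing steps filled in.
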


\begin{proof}
Let $T_1,T_2,\dots$ be a sequence of tempered distributions with $\widehat{T}_j \ge c_j \delta_0$, where the sequence $c_1,c_2,\dots$ converges to the dual $C$-constrained linear programming bound $c$. We will obtain a tempered distribution $T$ satisfying the hypotheses of the bound and $\widehat{T} \ge c \delta_0$ via a compactness argument.

Let $U$ be set of Schwartz functions $f \colon \R^n \to \R$ satisfying
\[
\sup_{x \in \R^n} (1+|x|^2)^{(n+1)/2} |f(x)| < 1.
\]
Thanks to the Schwartz seminorms, $U$ is an open subset of $S(\R^n)$. By the Banach-Alaoglu theorem (e.g., Theorem~3.15 in \cite{RudinFA}), the set
\[
U^\circ = \left\{D \in S'(\R^n) : \left|\int D f\right| \le 1 \text{ for all $f \in U$}\right\}
\]
is a compact subset of the space $S'(\R^n)$ of tempered distributions on $\R^n$. Furthermore, it is sequentially compact, since $S(\R^n)$ is separable. (Using a countable, dense subset of $S(\R^n)$, one can put a metric on $S'(\R^n)$ that defines a subset of the usual topology. Then the identity map on $U^\circ$ is a continuous bijection from the usual topology to the metric topology, and it is therefore a homeomorphism. However, compactness is equivalent to sequential compactness in metrizable spaces.)
\end{proof}

We do not know whether there is an analogue of Proposition~\ref{prop:dualattain} for the primal bound. In the special case of the linear programming bound for sphere packing, adapting the proof of Theorem~3 in \cite{gonccalves2017hermite} shows that there exists a continuous, integrable function that attains the optimum (see Proposition~\ref{prop:LPattain}). However, we do not know whether there exists an optimal Schwartz function, or how to generalize this proof to the $C$-constrained linear programming bound.

\section{Three-point bound for the lattice sphere packing problem}\label{sec:lattice}

\begin{proof}[Proof of Theorem~\ref{thm:latticethreepoint}]
By Theorem~\ref{theorem:nodualitygap} and Proposition~\ref{prop:mollify}, Schwartz functions come arbitrarily close to the optimal bound, and so we can assume without loss of generality that $f$ is a Schwartz function.

Let $\Lambda$ be the set of sphere centers of a lattice sphere packing in $\R^n$ by spheres of radius $r/2$. Then $\Lambda$ is a lattice with minimal vector length $r$. By Poisson summation,
\[
\sum_{x,y \in \Lambda} f(x,y) = \frac{1}{\vol(\R^n / \Lambda)^2} \sum_{x,y \in \Lambda^*} \widehat f(x,y) \geq \frac{1}{\vol(\R^n / \Lambda)^2}\widehat f(0) = \frac{1}{\vol(\R^n / \Lambda)^2}.
\]
For $x \in \Lambda \setminus \{0\}$ we have $|x| \ge r$, so $f(x,\pm x) \leq 0$, $f(x,0) \leq 0$, and $f(0,x) \leq 0$. For $x,y \in \Lambda \setminus \{0\}$ with $x \neq \pm y$ we have $|x|,|y|,|x-y|,|x+y| \ge r$, so $f(x,y) \le 0$. It thus follows that
\[
\frac{1}{\vol(\R^n / \Lambda)^2} \le \sum_{x,y \in \Lambda} f(x,y) \le f(0,0),
\]
and the result follows by noting that the sphere packing density of $\Lambda$ is
\[ \frac{\vol(B^n_{r/2})}{\vol(\R^n / \Lambda)}. \qedhere \]
\end{proof}

By symmetrization it follows that the same infimum is obtained if we restrict to functions $f$ satisfying
\[
f(x_1, x_2) = f(s_1 \gamma x_{\sigma(1)}, s_2 \gamma x_{\sigma(2)}).
\]
for all $x_1,x_2 \in \R^{n}$, $\sigma$ in the symmetric group $S_2$, $s \in \{\pm 1\}^2$, and $\gamma$ in the orthogonal group $O(n)$. 

\begin{remark} \label{rem:stronger}
We note that the bound in Theorem~\ref{thm:latticethreepoint} can be strengthened by shrinking the set $S_{\mathrm{lat},n}$ by imposing conditions of the form $|ax + by| \in \{ 0 \} \cup [r, \infty)$ for integers $a$ and $b$.  If we add all such conditions for coprime integers $(a,b)$ to $S_{\mathrm{lat}}$, then the problem acquires a symmetry under the action of $\mathrm{GL}_2(\Z)$.  While one cannot naively average over this infinite group to produce feasible solutions, it is possible to produce dual solutions that are invariant under $\mathrm{GL}_2(\Z)$ by using genus $2$ Siegel modular forms to produce summation formulas.  This is a generalization of how modular forms produce feasible dual solutions to the linear programming problem for sphere packing \cite{cohn2019dual}.
\end{remark}

\subsection{Interpretation via disjunctive products}

In this subsection, we show that the three-point lattice bound strengthens the linear programming bound as an upper bound on lattice packing.  Let $\Delta_2(n)$ be the linear programming bound bound for the $n$-dimensional sphere packing density and $\Delta_{\mathrm{lat}}(n)$ the upper bound on $n$-dimensional lattice packing density given by Theorem~\ref{thm:latticethreepoint}.

\begin{theorem}\label{thm:latticecomparison}
The lattice three-point bound is at least as strong as the two-point bound:
\[ 
\Delta_{\mathrm{lat}}(n) \le \Delta_2(n). 
\]
\end{theorem}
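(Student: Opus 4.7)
The plan is to introduce an intermediate ``disjunctive product'' bound $\Delta_\lor(n)$ and prove the chain $\Delta_{\mathrm{lat}}(n) \le \Delta_\lor(n) \le \Delta_2(n)$. Define $\Delta_\lor(n)$ as the infimum of $\vol(B_{r/2}^n)\sqrt{f(0,0)}$ over continuous, integrable $f \colon \R^n \times \R^n \to \R$ with $\widehat f \ge 0$ everywhere, $\widehat f(0,0) = 1$, and
\[
f(x,y) \le 0 \text{ whenever $(x,y) \ne (0,0)$ and $|x|,|y| \in \{0\} \cup [r,\infty)$.}
\]
The set on which this sign constraint is imposed contains $S_{\mathrm{lat},n} \setminus \{(0,0)\}$, so every $f$ feasible for $\Delta_\lor(n)$ is automatically feasible for the LP of Theorem~\ref{thm:latticethreepoint}. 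Hence $\Delta_{\mathrm{lat}}(n) \le \Delta_\lor(n)$ follows immediately by constraint inclusion.

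For the harder inequality $\Delta_\lor(n) \le \Delta_2(n)$, the idea is to pass through compact Cayley graphs. Fix a torus $T = \R^n/\Lambda_0$, let $G_T$ be the packing graph on $T$ with edge set $\{\{x,y\} : 0 < |x-y| < r\}$, and let $G_T \lor G_T$ be its disjunctive product on $T \times T$. The compact analog of Theorem~\ref{theorem:nodualitygap} (combined with Fourier analysis on $T$) identifies the Cohn--Elkies LP on $T$ with $\theta'(G_T)$, and identifies the two-variable disjunctive LP on $T \times T$ with $\theta'(G_T \lor G_T)$. The multiplicativity $\theta'(G_T \lor G_T) = \theta'(G_T)^2$, obtained by the standard tensor construction on primal feasible solutions together with strong duality, then yields the squared relation between these two torus LP values. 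Rescaling $\Lambda_0$ to become arbitrarily sparse in the spirit of Theorem~\ref{thm:limit}, one obtains $\Delta_\lor(n)^2 \le \Delta_2(n)^2$ in the Euclidean limit, and taking square roots completes the argument.

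The main technical obstacle will be rigorously justifying the Euclidean limit for the two-variable bound $\Delta_\lor(n)$: one must approximate a feasible function on $\R^n \times \R^n$ by functions periodic under $\Lambda_0 \times \Lambda_0$ while preserving the sign constraints, and use Proposition~\ref{prop:mollify} to move between the continuous integrable and Schwartz regimes needed for duality. This is a delicate two-variable version of the rescaling in Theorem~\ref{thm:limit}. Extending multiplicativity from finite graphs to compact Cayley graphs is by comparison routine, since the tensor construction on primal feasible solutions carries over and strong duality then supplies the matching upper bound.
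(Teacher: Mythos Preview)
Your strategy matches the paper's: introduce the disjunctive-product bound (the paper's $\Delta_{\mathrm{prod}}(n,n)$, which is your $\Delta_\lor(n)^2$), get $\Delta_{\mathrm{lat}}(n)^2 \le \Delta_{\mathrm{prod}}(n,n)$ by constraint inclusion, prove that $\vartheta'$ is multiplicative under disjunctive products of compact packing graphs, and combine this with a scaling limit to obtain $\Delta_{\mathrm{prod}}(n,n) \le \Delta_2(n)^2$. The paper works with large cubes $rI^n$ rather than flat tori, but that difference is cosmetic.

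One correction about the technical obstacle you flag. For the inequality $\Delta_\lor(n) \le \Delta_2(n)$ you do \emph{not} need to periodize a two-variable Euclidean function; you need the opposite passage, from a feasible kernel on the compact product to a feasible function on $\R^n\times\R^n$. The paper does this in Lemma~\ref{lemma:thetaprimedisj} by averaging over translations, and it is the easier direction. Combined with multiplicativity and the known one-variable limit $\vol(B_1^n)\,\vartheta'(rI^n)/r^n \to \Delta_2(n)$ from \cite{rescaling2021} (whose hard half is indeed a one-variable Euclidean-to-compact approximation), this already yields $\Delta_\lor(n)\le\Delta_2(n)$. The two-variable periodization you describe would instead give the reverse inequality $\Delta_2(n)^2 \le \Delta_\lor(n)^2$; the paper obtains that reverse inequality by tensoring dual solutions and invoking Theorem~\ref{theorem:nodualitygap}, but it is not needed for Theorem~\ref{thm:latticecomparison} itself. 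Finally, note that the direction $\vartheta'(G*G)\le\vartheta'(G)^2$ does not follow from tensoring plus strong duality (tensoring kernels fails because the product of two nonpositive numbers is nonnegative); the paper proves it by a pushforward argument on primal measures in Theorem~\ref{theorem:multiplicative}.
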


The proof will follow by constructing a bound $\Delta_{\mathrm{prod}}(n, n)$ that is manifestly an upper bound on $\Delta_{\mathrm{lat}}(n)^2$ and is precisely equal to $\Delta_2(n)^2$.  We do this by relating the Delsarte program on compact spaces $G$ and $H$ to the theta prime number $\vartheta'$ of the disjunctive product of these spaces $G * H$, viewed as a topological packing graph.  The Cohn-Elkies program is a scaling limit of the Delsarte program in a precise way, while $\Delta_{\mathrm{prod}}(n, m)$ is a scaling limit of disjunctive product.

Recall that the \emph{disjunctive product} of graphs $G_1$ and $G_2$, denoted $G_1 * G_2$, has vertex set $V(G_1) \times V(G_2)$, and two distinct vertices $(x_1, x_2)$ and $(y_1, y_2)$ are adjacent if $x_1 y_1 \in E(G_1)$ or $x_2 y_2 \in E(G_2)$, inclusively.  In particular, the disjunctive product of topological packing graphs is a topological packing graph.  The name ``disjunctive product'' simply distinguishes it from other graph products by how the conditions $x_1 y_1 \in E(G_1)$ and $x_2 y_2 \in E(G_2)$ are combined.

The independence number $\alpha$ is multiplicative under the disjunctive product:

\begin{proposition}
For all graphs $G_1$ and $G_2$,
\[\alpha(G_1 * G_2) = \alpha(G_1) \alpha(G_2).\]
\end{proposition}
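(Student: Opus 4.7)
The plan is to prove equality by showing the two inequalities $\alpha(G_1 * G_2) \ge \alpha(G_1)\alpha(G_2)$ and $\alpha(G_1 * G_2) \le \alpha(G_1)\alpha(G_2)$ separately, both by elementary direct arguments on vertex sets. The definition of the disjunctive product is engineered precisely so that this multiplicativity works, so I do not expect serious obstacles; the only thing to watch is to correctly handle distinct versus coincident first (or second) coordinates.

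For the lower bound, I would take independent sets $I_1 \subseteq V(G_1)$ and $I_2 \subseteq V(G_2)$ of sizes $\alpha(G_1)$ and $\alpha(G_2)$, and check that $I_1 \times I_2$ is independent in $G_1 * G_2$. Given two distinct pairs $(x_1,x_2), (y_1,y_2) \in I_1 \times I_2$, either $x_1 \ne y_1$, in which case $x_1 y_1 \notin E(G_1)$ by independence of $I_1$ and also $x_2 y_2 \notin E(G_2)$ (either $x_2 = y_2$, or $x_2 \ne y_2$ and independence of $I_2$ applies), or $x_1 = y_1$, in which case $x_2 \ne y_2$ and the same reasoning applies to the second coordinate. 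In all cases the adjacency condition of the disjunctive product fails.

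For the upper bound, let $S$ be an independent set in $G_1 * G_2$ and consider its coordinate projections $\pi_i(S) \subseteq V(G_i)$. I would argue that each $\pi_i(S)$ is independent in $G_i$: if $x_1, y_1 \in \pi_1(S)$ are distinct with $x_1 y_1 \in E(G_1)$, choose lifts $(x_1,x_2), (y_1,y_2) \in S$; these are distinct vertices (since $x_1 \ne y_1$) and adjacent in $G_1 * G_2$ since the first-coordinate edge condition holds, contradicting independence of $S$. Thus $|\pi_1(S)| \le \alpha(G_1)$, and symmetrically $|\pi_2(S)| \le \alpha(G_2)$. Since $S \subseteq \pi_1(S) \times \pi_2(S)$, we conclude $|S| \le \alpha(G_1)\alpha(G_2)$.

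The main (minor) pitfall is the handling of the case $x_1 = y_1$ in the lower bound, where one must remember that distinctness of the pairs forces $x_2 \ne y_2$ so that independence of $I_2$ actually applies; this is just the standard care required in products of simple graphs, and nothing deeper is needed.
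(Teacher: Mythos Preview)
Your proof is correct and follows essentially the same approach as the paper: the Cartesian product of independent sets for the lower bound, and the projections $\pi_i(S)$ together with $S \subseteq \pi_1(S)\times\pi_2(S)$ for the upper bound. You simply spell out in more detail the verifications that the paper leaves implicit.
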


\begin{proof}
We can take the Cartesian product of independent sets to prove that $\alpha(G_1 * G_2) \ge \alpha(G_1) \alpha(G_2)$.  For the other direction, if $C$ is an independent set of the disjunctive product, then the projections $\pi_i(C)$ onto the factors are independent sets, and $C \subseteq \pi_1(C) \times \pi_2(C)$.
\end{proof}

We now prove the analogous result for $\vartheta'$, the theta prime number for compact topological packing graphs \cite{LovaszShannon,MRR,SchrijverComparison,BNOV,de2015semidefinite}. It is noteworthy that this construction lifts to $\vartheta'$ (and, moreover, should lift to the whole Lasserre hierarchy, although we do not consider this).

Recall that for a topological packing graph $G$ with vertex set $V$, $\vartheta'(G)$ considers Borel measures over $V \times V$. The vertex set $V$ embeds diagonally into the product, and its image is denoted $\Delta(V)$.  The theta prime number $\vartheta'(G)$ is defined as the maximum value of $\mu(V \times V)$ over all Borel measures $\mu$ on $V \times V$ that are positive and positive definite, that satisfy $\mu(\Delta(V)) = 1$, and for which the support of $\mu$ consists of pairs $(x,y)$ such that $xy \not \in E(G)$.

The dual bound $\vartheta'(G)^*$ is defined as the infimum of $\max_x K(x,x)$ over all continuous kernels $K$ on $V^2$ such that $K(x,y) - 1$ is a positive definite kernel and $K(x,y) \le 0$ for all $xy \not \in E(G)$.  For infinite graphs, this can be viewed the value of as an infinite-dimensional semidefinite program, whose optimal value is an upper bound on the largest cardinality of an independent set in $V$.  It follows from \cite[Theorem~1]{de2015semidefinite} that the maximum in the definition of $\vartheta'(G)$ is actually achieved and that strong duality holds, i.e., $\vartheta'(G) = \vartheta'(G)^*$.

\begin{theorem} \label{theorem:multiplicative}
The $\vartheta'$ number is multiplicative for the disjunctive product: for compact topological packing graphs $G_1$ and $G_2$,
\[
\vartheta'(G_1 * G_2) = \vartheta'(G_1) \vartheta'(G_2).
\]
\end{theorem}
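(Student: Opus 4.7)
The plan is to show both inequalities separately, using strong duality from \cite[Theorem~1]{de2015semidefinite} to pass freely between primal and dual formulations.

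For the lower bound $\vartheta'(G_1 * G_2) \geq \vartheta'(G_1) \vartheta'(G_2)$, I would take primal optima $\mu_1, \mu_2$ for the two factor graphs (which exist by strong duality) and form the product measure $\mu_1 \otimes \mu_2$ on $(V_1 \times V_2)^2 \cong V_1^2 \times V_2^2$ via the natural identification. Positivity, the normalization $\mu_1(\Delta(V_1)) \mu_2(\Delta(V_2)) = 1$, and the objective value $\vartheta'(G_1) \vartheta'(G_2)$ are immediate. The support condition transfers directly from the factors, since a pair $((x_1, x_2), (y_1, y_2))$ has $x_1 y_1 \notin E(G_1)$ and $x_2 y_2 \notin E(G_2)$ precisely when it is either a non-edge of $G_1 * G_2$ or lies on the diagonal. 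Positive definiteness of $\mu_1 \otimes \mu_2$ as a kernel follows by approximating continuous $f \colon V_1 \times V_2 \to \R$ by finite sums of tensor products via Stone--Weierstrass; on such tensors the quadratic form against $\mu_1 \otimes \mu_2$ decomposes into a Frobenius inner product of two PSD Gram matrices arising from $\mu_1$ and $\mu_2$, which is nonnegative.

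For the upper bound $\vartheta'(G_1 * G_2) \leq \vartheta'(G_1) \vartheta'(G_2)$, I would tensor dual solutions, but not naively. The candidate $K_1 \otimes K_2$ from the paper's dual formulation does satisfy $K - 1 \succeq 0$ (expand as $(K_1 - 1) \otimes 1 + 1 \otimes (K_2 - 1) + (K_1 - 1) \otimes (K_2 - 1)$, a sum of three pointwise-PSD kernels) and has $\max_x K(x, x) = \vartheta'(G_1) \vartheta'(G_2)$, but it can fail the sign condition on non-edges off diagonal, since the product of two negative numbers is positive. To fix this, I would translate $\vartheta'$ into a Schrijver-style orthonormal representation formulation (as in \cite{SchrijverComparison}): $\vartheta'(G)$ equals the infimum of $\max_x 1/\langle c, u_x \rangle^2$ over continuous unit vector fields $u \colon V \to H$ into a Hilbert space with $\langle u_x, u_y \rangle \geq 0$ on all pairs and $\langle u_x, u_y \rangle = 0$ when $xy \in E(G)$, together with a unit handle $c$ satisfying $\langle c, u_x \rangle \geq 0$. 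The tensor $(c^{(1)} \otimes c^{(2)}, \, (x_1, x_2) \mapsto u^{(1)}_{x_1} \otimes u^{(2)}_{x_2})$ is then manifestly feasible for $G_1 * G_2$: tensors of units are unit, products of nonnegatives are nonnegative, and orthogonality on edges of $G_1 * G_2$ holds because an edge in $*$ forces at least one tensor factor to vanish; the value multiplies to $\vartheta'(G_1) \vartheta'(G_2)$.

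The principal obstacle is establishing the equivalence of the paper's dual formulation with the Schrijver orthonormal representation formulation in the compact topological setting. For finite graphs this is a classical consequence of SDP/LP duality together with Mercer/Cholesky decomposition and complementary slackness, the latter forcing an optimal dual kernel to vanish on the support of the optimal primal within the non-edges; after rescaling and adjoining a handle coordinate, this reveals the required nonnegative orthonormal representation structure. For compact topological packing graphs I would apply Mercer's theorem to the continuous kernel $K - 1 \succeq 0$ to extract a continuous Hilbert-space Gram representation, and then combine strong duality from \cite{de2015semidefinite} with a continuity and density argument on the compact space $V$ to promote vanishing of $K$ on the primal support within non-edges to vanishing on all non-edges, which is the structural ingredient needed for the orthonormal representation.
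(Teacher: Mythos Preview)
Your lower bound argument via the tensor product of primal measures matches the paper's exactly.

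For the upper bound, however, you take a genuinely different route and it has a gap. You propose to pass through a Schrijver-style orthonormal representation formulation and tensor those, and you correctly identify the principal obstacle as establishing that this formulation agrees with the kernel dual $\vartheta'(G)^*$ for compact topological packing graphs. But your sketch for resolving this obstacle does not work: complementary slackness only tells you that an optimal dual kernel $K$ vanishes on the \emph{support} of an optimal primal measure, not on all non-edges, and there is no density argument that promotes the former to the latter in general (the primal optimum need not be supported on every non-edge pair). Even in the finite case, the passage from the kernel dual to the nonnegative orthonormal representation is not a direct Mercer/Cholesky unwinding of $K-1$; it goes through a separate primal-dual equivalence, and porting that to the topological setting would require real work that you have not done.

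The paper avoids this entirely by staying on the primal side for \emph{both} inequalities. Given any feasible measure $\mu$ for $\vartheta'(G_1 * G_2)$, it constructs two factor measures: the pushforward $(\pi_1)_*\mu$ to $V_1 \times V_1$, normalized by $\mu(\Delta(V_1) \times V_2^2)$, which is feasible for $\vartheta'(G_1)$; and the diagonal restriction $\mu_2(B) = \mu(\Delta(V_1) \times B)$ on $V_2 \times V_2$, which is feasible for $\vartheta'(G_2)$. Positive definiteness of each is checked by integrating a positive definite kernel (tensored with the constant $1$ or with the diagonal indicator $\Delta$, both positive definite) against $\mu$. The product of the two objective values telescopes to $\mu((V_1 \times V_2)^2)$, giving $\vartheta'(G_1)\vartheta'(G_2) \ge \vartheta'(G_1 * G_2)$ with no appeal to duality or orthonormal representations at all. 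This decomposition is the idea you are missing.
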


\begin{proof}
For one direction, taking the outer tensor product of measures shows that $\vartheta'(G_1) \vartheta'(G_2) \le \vartheta'(G_1 * G_2)$.

For the other direction, consider any feasible measure $\mu$ for $G_1 * G_2$.  Let $V_1 = V(G_1)$ and $V_2 = V(G_2)$, and consider the projections $\pi_i \colon V_1 \times V_2 \times V_1 \times V_2 \to V_i \times V_i$ for $i=1,2$. If $\mu$ is feasible for $\vartheta'(G_1 * G_2)$, then the pushforward measure $(\pi_1)_* \mu$ (defined by $((\pi_1)_* \mu)(A) = \mu(\pi_1^{-1}(A))$) is a positive measure, and $(x_1,y_1) \in \mathrm{supp}((\pi_1)_* \mu)$ only when $x_1 y_1 \not \in E$.  Furthermore, it is positive definite, because
\[\int K(x_1, y_1) \,d(\pi_1)_* \mu(x_1, y_1) = \int K(x_1, y_1) 1(x_2,y_2) \,d\mu(x_1,x_2,y_1,y_2) \ge 0\]
for any positive definite kernel $K$, where $1(x_2,y_2)$ denotes the constant function $1$, and so $(\pi_1)_* \mu / \mu(\Delta(V_1) \times V_2 \times V_2)$ is a feasible measure for $\vartheta'(G_1)$.  (Technically the product $\Delta(V_1) \times V_2 \times V_2$ is an abuse of notation, since it reorders the factors relative to $V_1 \times V_2 \times V_1 \times V_2$, but we will identify these spaces as needed to simplify the notation.)

Define $\mu_2$ by $\mu_2(B) = \mu(\Delta(V_1) \times B)$.  For a positive definite kernel $K$ on $V_2 \times V_2$,
\[\int K(x_2, y_2) \,d\mu_2(x_2, y_2) = \int K(x_2, y_2) \Delta(x_1, y_1) \, d\mu(x_1, x_2, y_1, y_2),\]
where 
\[
\Delta(x_1,y_1) = \begin{cases} 1 & \text{if $x_1=y_1$, and}\\
0 & \text{otherwise.}
\end{cases}
\]
This integral is nonnegative, because the Kronecker product of positive definite kernels is positive definite, and therefore $\mu_2$ is feasible as well.

To summarize, we have constructed two measures $(\pi_1)_* \mu / \mu(\Delta(V_1) \times V_2^2)$ and $\mu_2$ that give feasible solutions for $\vartheta'(G_1)$ and $\vartheta'(G_2)$ such that the product of their values is $\mu(V_1^2 \times V_2^2)$.
We conclude that $\vartheta'(G_1) \vartheta'(G_2) \ge \vartheta'(G_1 * G_2)$, as desired.
\end{proof}

The linear programming bound for sphere packing is the scaling limit of $\vartheta'$ in Euclidean space of dimension $n$ by Theorem~5.7 in \cite{rescaling2021}, and it is natural to consider the scaling limit of $\vartheta'$ for disjunctive products $\vartheta'(G_1 * G_2)$.

\begin{definition} \label{def:Deltaprod}
Let $\Delta_{\mathrm{prod}}(m, n)$ be the infimum of
\[ \vol(B^m_{1}) \vol(B^n_{1}) \frac{f(0,0)}{\widehat{f}(0,0)} \]
over all continuous, integrable functions $f \colon \R^{m} \times \R^n \to \R$ such that
\begin{enumerate}
    \item $f$ is positive definite, i.e, $\widehat{f}(x,y) \ge 0$ for all $x \in \R^m$ and $y \in \R^n$,
    \item $\widehat{f}(0,0)>0$,
    \item $f(x,y) \le 0$ for all $x$ and $y$ such that $|x| \ge 2$ and $|y| \ge 2$, and
    \item $f(x,0) \le 0$ and $f(0,y) \le 0$ for all $|x| \ge 2$ and $|y| \ge 2$.
\end{enumerate}
\end{definition}

Note that without loss of generality, we can assume these functions $f$ are invariant under $O(m) \times O(n)$ (by averaging under these rotations), because the constraints and objective functions are invariant under this action. Furthermore, we can assume that $f$ has compact support by mollifying a feasible function $f$ as follows. The convolution $1_{B_{R/2}^n(0)} * 1_{B_{R/2}^n(0)}$ is a continuous function supported in $B_R^n(0)$, and it is positive semidefinite since its Fourier transform is $\big(\widehat{1}_{B_{R/2}^n(0)}\big)^2$. If we normalize it by setting $g_{R,n} := \big(1_{B_{R/2}^n(0)} * 1_{B_{R/2}^n(0)}\big)/\vol(B_{R/2}^n(0))$ so that $g_{R,n}(0)=1$, then $g_{R,n}$ converges pointwise to $1$ everywhere as $R \to \infty$. Now the product $f_R := f \cdot g_{R,m} g_{R,n}$ is continuous and supported in $B_R^m(0) \times B_R^n(0)$, it is positive semidefinite by the Schur product theorem (Theorem~7.5.3 in \cite{HJ}), and $\lim_{R \to \infty} \widehat{f}_R(0) = \widehat{f}(0)$ by dominated convergence. By replacing $f$ with $f_R$, we can come arbitrarily close to the ratio $f(0)/\widehat{f}(0)$ using compactly supported functions.

Any function $f$ that satisfies the hypotheses of Definition~\ref{def:Deltaprod} with $m=n$ automatically satisfies the hypotheses of Theorem~\ref{thm:latticethreepoint} with $r=2$. Thus, we obtain the following upper bound for $\Delta_{\mathrm{lat}}(n)$:

\begin{proposition}
The lattice three-point bound satisfies
\[
\Delta_{\mathrm{lat}}(n)^2  \le \Delta_{\mathrm{prod}}(n, n).
\]
\end{proposition}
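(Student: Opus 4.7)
The plan is to show that every function $f$ feasible for $\Delta_{\mathrm{prod}}(n,n)$ in Definition~\ref{def:Deltaprod} is, after a harmless rescaling, feasible for Theorem~\ref{thm:latticethreepoint} with $r = 2$. By condition~(2) of Definition~\ref{def:Deltaprod} we have $\widehat{f}(0,0) > 0$, so we may replace $f$ by $f/\widehat{f}(0,0)$ to arrange $\widehat{f}(0,0) = 1$; this preserves continuity, integrability, pointwise nonnegativity of $\widehat{f}$, and all of the sign inequalities in (3) and~(4).

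Next I would verify the sign condition $f(x,y) \le 0$ for $(x,y) \in S_{\mathrm{lat},n} \setminus \{(0,0)\}$ demanded by Theorem~\ref{thm:latticethreepoint} with $r=2$. Any nonzero point $(x,y) \in S_{\mathrm{lat},n}$ falls into one of three cases: if $x = 0$ then $|y| \ge 2$ and $f(0,y) \le 0$ by~(4); if $y = 0$ then $|x| \ge 2$ and $f(x,0) \le 0$ by~(4); and if both $x$ and $y$ are nonzero then $|x|, |y| \ge 2$, so (3) gives $f(x,y) \le 0$. In particular the additional conditions on $|x+y|$ and $|x-y|$ built into $S_{\mathrm{lat},n}$ are not needed to inherit the inequality; the feasible set for Theorem~\ref{thm:latticethreepoint} with $r=2$ simply contains the (normalized) feasible set for $\Delta_{\mathrm{prod}}(n,n)$.

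Applying Theorem~\ref{thm:latticethreepoint} to the rescaled function and then undoing the rescaling yields
\[
\Delta_{\mathrm{lat}}(n) \le \vol(B_1^n) \sqrt{f(0,0)/\widehat{f}(0,0)}
\]
for every feasible $f$. (The square root makes sense because Fourier inversion together with $\widehat{f} \ge 0$ gives $f(0,0) = \int \widehat{f} \ge 0$.) Squaring both sides and taking the infimum over feasible $f$ produces the claimed inequality $\Delta_{\mathrm{lat}}(n)^2 \le \Delta_{\mathrm{prod}}(n,n)$. There is essentially no obstacle in this argument: the only step requiring any care is the case analysis in the previous paragraph, and even there the matching of constraints is straightforward.
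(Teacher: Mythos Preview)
Your proposal is correct and takes essentially the same approach as the paper: the paper's proof is the single observation that any $f$ feasible for $\Delta_{\mathrm{prod}}(n,n)$ automatically satisfies the hypotheses of Theorem~\ref{thm:latticethreepoint} with $r=2$, and you have simply spelled out the case analysis on $S_{\mathrm{lat},n}\setminus\{(0,0)\}$ that makes this explicit. One minor quibble: your justification that $f(0,0)\ge 0$ via Fourier inversion presumes $\widehat f$ is integrable, which is not directly assumed; it is cleaner to note that $\widehat f\ge 0$ makes $f$ a positive definite function on $\R^{2n}$, whence $f(0,0)\ge |f(z)|$ for all $z$.
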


Theorem~\ref{thm:latticecomparison} follows immediately from this upper bound and the following proposition:

\begin{proposition} \label{prop:Deltaprod}
For all positive integers $m$ and $n$,
\[
\Delta_{\mathrm{prod}}(m,n) = \Delta_2(m) \Delta_2(n).
\]
\end{proposition}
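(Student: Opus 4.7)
My plan is to realize $\Delta_{\mathrm{prod}}(m,n)$ as a scaling limit of Lovász theta prime numbers $\vartheta'(rC_1 * rC_2)$ for disjunctive products of compact sets, and then to invoke the multiplicativity of $\vartheta'$ from Theorem~\ref{theorem:multiplicative}. Concretely, I would prove that for any compact Jordan-measurable $C_1 \subseteq \R^m$ and $C_2 \subseteq \R^n$ of nonzero volume,
\[
\Delta_{\mathrm{prod}}(m,n) = \lim_{r \to \infty} \frac{\vol(B_1^m)\vol(B_1^n)}{\vol(rC_1)\vol(rC_2)}\,\vartheta'(rC_1 * rC_2),
\]
where each $rC_i$ is viewed as a topological packing graph whose edges join pairs at distance less than~$2$. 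Granting this, Theorem~\ref{theorem:multiplicative} gives $\vartheta'(rC_1 * rC_2) = \vartheta'(rC_1)\vartheta'(rC_2)$, while the two-point scaling limit of \cite[Theorem~5.7]{rescaling2021} gives $\Delta_2(n) = \lim_{r\to\infty}\frac{\vol(B_1^n)}{\vol(rC)}\vartheta'(rC)$, and multiplying these two limits proves the proposition.

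The compatibility that makes the scaling limit work is that non-adjacent pairs in $rC_1 * rC_2$ correspond under differencing to pairs $(x,y) \in \R^m \times \R^n$ satisfying $(|x|\ge 2$ or $x=0)$ and $(|y|\ge 2$ or $y=0)$, which, after excluding $(0,0)$, is precisely the region where conditions (3) and (4) of Definition~\ref{def:Deltaprod} demand $f \le 0$. The two directions of the scaling limit then follow the template of the proof of Theorem~\ref{thm:limit}. For the inequality $\limsup \le \Delta_{\mathrm{prod}}(m,n)$, I would start with a feasible $f$ for $\Delta_{\mathrm{prod}}$, reduced to have compact support via the mollification described after Definition~\ref{def:Deltaprod}, restrict it to $rC_1 \times rC_2$, rescale, and subtract a constant on the diagonal to produce a feasible kernel for the dual of $\vartheta'$ on slightly shrunken sets $(rC_1)_R \times (rC_2)_R$; the Jordan measurability of $C_1$ and $C_2$ makes the boundary shrinkage negligible in the limit. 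For the inequality $\Delta_{\mathrm{prod}}(m,n) \le \liminf$, I would start with a feasible dual kernel $K$ for $\vartheta'(rC_1 * rC_2)$, extend by zero outside $rC_1 \times rC_2$, and average over translations $z \in \R^m \times \R^n$ to obtain a continuous, integrable, translation-invariant function on $\R^m \times \R^n$ satisfying conditions (1)--(4). The reduction from Jordan-measurable $C_i$ to disjoint unions of cubes uses the obvious disjunctive-product extension of Lemma~\ref{lemma:subadditive}, which follows by applying that lemma coordinate-by-coordinate together with Theorem~\ref{theorem:multiplicative}.

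The main obstacle is ensuring positive definiteness of the kernel constructed by truncation in the $\limsup$ direction: the truncation introduces a boundary error that must be absorbed into a factor $1+\varepsilon$ multiplying the objective, exactly as in the verification of \eqref{eq:posdefK} during the proof of Theorem~\ref{thm:limit}. Aside from this, the argument is a direct two-coordinate extension of the two-point scaling limit of \cite{rescaling2021}, since conditions (1)--(4) of Definition~\ref{def:Deltaprod} are precisely the Euclidean shadow of the dual constraints of $\vartheta'(G_1 * G_2)$.
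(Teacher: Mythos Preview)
Your proposal is correct, and the direction $\Delta_{\mathrm{prod}}(m,n) \le \Delta_2(m)\Delta_2(n)$ matches the paper exactly: Lemma~\ref{lemma:thetaprimedisj} is precisely your ``average a feasible dual kernel for $\vartheta'(rC_1*rC_2)$ over translations'' step, and the paper then combines it with Theorem~\ref{theorem:multiplicative} and the two-point scaling limit from \cite{rescaling2021} just as you do.

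For the reverse inequality $\Delta_2(m)\Delta_2(n) \le \Delta_{\mathrm{prod}}(m,n)$, however, you take a genuinely different route. The paper does not prove the $\limsup$ half of the scaling limit at all; instead it invokes the strong duality of Theorem~\ref{theorem:nodualitygap}. Observing that $\Delta_{\mathrm{prod}}(m,n)$ is the $C$-constrained linear programming bound for $C = \{(x,y) : |x|,|y| \in \{0\}\cup[1,\infty)\}\setminus\{(0,0)\}$, one simply tensors feasible dual measures $\nu_1,\nu_2$ for $\Delta_2(m)$ and $\Delta_2(n)$ to obtain a feasible dual measure $\nu_1\otimes\nu_2$ for $\Delta_{\mathrm{prod}}(m,n)$. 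Your approach instead adapts the truncation construction from the proof of Theorem~\ref{thm:limit}: compactify $f$, restrict to $(rC_1)_R\times(rC_2)_R$, subtract a constant, and absorb the boundary error into a $(1+\varepsilon)$ factor to recover positive definiteness. This works (the verification of \eqref{eq:posdefK} carries over verbatim with $rC$ replaced by $rC_1\times rC_2\subseteq\R^{m+n}$, since $\widehat f\ge 0$ makes the translation kernel positive definite), and it has the advantage of being self-contained, avoiding the machinery of Section~\ref{sec:duality}. The paper's duality argument is shorter once Theorem~\ref{theorem:nodualitygap} is in hand, and it makes the product structure transparent at the level of dual measures rather than primal kernels. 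Note also that for your purposes it suffices to take $C_1$ and $C_2$ to be cubes, so the subadditivity detour through Lemma~\ref{lemma:subadditive} is unnecessary.
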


To prove Proposition~\ref{prop:Deltaprod}, we will need the following lemma, which shows that $\Delta_{\mathrm{prod}}$ is related to $\vartheta'(C_1 * C_2)$ for compact spaces $C_1$ and $C_2$ in the same way that $\Delta_2$ is related to $\vartheta'(C_1)$ or $\vartheta'(C_2)$ individually.
Recall that we give a compact subset $C$ of $\R^n$ a topological packing graph structure by letting $xy \in E(C)$ if and only if $0 < |x-y| < 1$.

\begin{lemma} \label{lemma:thetaprimedisj}
Let $C_1$ and $C_2$ be compact, Jordan-measurable subsets of $\mathbb{R}^m$ and $\mathbb{R}^n$, respectively.  Then
\[\vol(B^m_{1}) \vol(B^n_{1}) \frac{\vartheta'(C_1*C_2)}{\vol(C_1) \vol(C_2)} \ge \Delta_{\mathrm{prod}}(m,n).\]
\end{lemma}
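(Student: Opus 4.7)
The plan is to adapt the two-point case of the construction from the first half of the proof of Theorem~\ref{thm:limit} to the disjunctive-product setting, extracting from a near-optimal dual feasible kernel for $\vartheta'(C_1*C_2)$ a translation-invariant function that is feasible for $\Delta_{\mathrm{prod}}(m,n)$ with the right ratio. By strong duality for $\vartheta'$ (as stated in the excerpt), for every $M>\vartheta'(C_1*C_2)$ there is a continuous kernel $K$ on $(C_1\times C_2)^2$ such that $K-1$ is positive definite, $K(X,Y)\le 0$ for distinct non-adjacent vertices $X,Y$ of $C_1*C_2$, and $K(X,X)\le M$ for all $X$. Write $V=\vol(C_1)\vol(C_2)$, extend $K$ by zero to a bounded function $\tilde K$ on $(\R^{m+n})^2$, and set
\[
f(u,v)=\frac{1}{V}\int_{\R^{m+n}}\tilde K\bigl(Z+(u,v),\,Z\bigr)\,dZ
\]
for $(u,v)\in\R^m\times\R^n$. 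Because the integrand is bounded and vanishes outside a compact region in $Z$, the function $f$ has compact support, is integrable, and is continuous by dominated convergence.

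For positive definiteness, translating the $Z$ variable gives, for any finite set $\{X_i\}\subseteq\R^{m+n}$ with weights $w_i$,
\[
\sum_{i,j}w_iw_j f(X_i-X_j)=\frac{1}{V}\int_{\R^{m+n}}\sum_{i,j}w_iw_j\tilde K(Z+X_i,Z+X_j)\,dZ,
\]
and for each $Z$ the inner sum equals the quadratic form of $K$ on the subset of indices with $Z+X_i\in C_1\times C_2$, which by positive definiteness of $K-1$ is bounded below by $\bigl(\sum_{i:Z+X_i\in C_1\times C_2}w_i\bigr)^2\ge 0$. Hence $f$ is positive definite as a translation-invariant kernel, so $\widehat f\ge 0$ by Bochner's theorem.

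To check the sign conditions of Definition~\ref{def:Deltaprod}, take $(x,y)\in\R^m\times\R^n$ with $|x|,|y|\ge 2$. Wherever the integrand $\tilde K(Z+(x,y),Z)$ is nonzero, both $Z$ and $Z+(x,y)$ lie in $C_1\times C_2$ and are distinct, and each coordinate difference $|x|$ and $|y|$ exceeds the edge-length cutoff on the respective factor, so $(Z+(x,y),Z)$ is a non-edge of the disjunctive product and the integrand is $\le 0$. The same argument with $y=0$ or $x=0$ handles $f(x,0)\le 0$ and $f(0,y)\le 0$, since a zero coordinate trivially satisfies the non-adjacency cutoff on that factor. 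Moreover $f(0,0)=V^{-1}\int_{C_1\times C_2}K(Z,Z)\,dZ\le M$, and
\[
\widehat f(0,0)=\int_{\R^{m+n}}f(u,v)\,du\,dv=\frac{1}{V}\iint_{(C_1\times C_2)^2}K(X,Z)\,dX\,dZ\ge V,
\]
where the final inequality comes from applying positive definiteness of $K-1$ to the Lebesgue measure on $C_1\times C_2$.

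Since $\widehat f(0,0)\ge V>0$, the function $f$ is feasible for $\Delta_{\mathrm{prod}}(m,n)$, and
\[
\Delta_{\mathrm{prod}}(m,n)\le \vol(B_1^m)\vol(B_1^n)\frac{f(0,0)}{\widehat f(0,0)}\le \vol(B_1^m)\vol(B_1^n)\frac{M}{V}.
\]
Letting $M\downarrow\vartheta'(C_1*C_2)$ yields the lemma. The most delicate point is the positive-definiteness step for $f$, since $\tilde K$ is discontinuous along $\partial(C_1\times C_2)^2$; the key observation is that the zero extension of a positive definite kernel remains positive definite pointwise, and this property survives the translation-average. The remaining work is bookkeeping that matches the disjunctive-product non-adjacency condition on $K$ to the coordinatewise sign conditions in Definition~\ref{def:Deltaprod}.
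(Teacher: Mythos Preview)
Your proof is correct and follows essentially the same route as the paper's: take a near-optimal dual kernel $K$ for $\vartheta'(C_1*C_2)$, extend by zero, average over translations to produce a translation-invariant function, and then verify the feasibility conditions of Definition~\ref{def:Deltaprod} together with the bounds $f(0,0)\le M$ and $\widehat f(0,0)\ge V$. Your normalization by $1/V$ up front and your explicit remarks on continuity and on why the zero extension stays positive definite are cosmetic additions, not a different argument.
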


\begin{proof}
Let $K$ be a feasible kernel for $\vartheta'(C_1 * C_2)$ satisfying $K((x_1, x_2), (x_1, x_2)) \le M$ for $x_i \in C_i$, and define $f \colon (\R^m \times \R^n)^2 \to \R$ by $f(x,y) = K(x,y)$ when this is defined and $0$ otherwise.  To produce a feasible function for $\Delta_{\mathrm{prod}}(m,n)$, we define $g \colon \R^m \times \R^n \to \R$ by
\[
g(x) = \int_{\mathbb{R}^{m+n}} f(z+x,z) \,dz.
\]

To show that $g$ is feasible, we first note that it is positive definite because $f$ is; specifically,
\[
\sum_{j,k} c_j c_k g(x_j-x_k) = \int_{\R^{m+n}} \sum_{j,k} c_j c_k f(z+x_j,z+x_k) \, dz \ge 0.
\]
The inequality $g(x_1,x_2) \le 0$ if both $|x_1| \ge 2$ and $|x_2| \ge 2$ follows from the corresponding fact for $f(z+x, z)$, since in that case $(z+x, z)$ is never an edge of the disjunctive product graph $C_1 * C_2$. Similarly, $g(x_1,0) \le 0$ and $g(0,x_2) \le 0$.

The solution satisfies
\[g(0,0) =\int f(z,z) \,dz \le M \vol(C_1) \vol(C_2)\]
and
\[\widehat{g}(0, 0) = \int_{\mathbb{R}^{m+n}} g(x) \,dx = \iint f(x,y) \,dx \,dy \ge \vol(C_1) \vol(C_2),\]
because $K-1$ is by assumption a positive definite kernel on $C_1 \times C_2$.
Thus, $\vol(B^m_{1}) \vol(B^n_{1}) M$ is an upper bound for $\Delta_{\mathrm{prod}}(m,n)$.
\end{proof}

\begin{proof}[Proof of Proposition~\ref{prop:Deltaprod}]
To prove that $\Delta_2(m) \Delta_2(n) \le \Delta_{\mathrm{prod}}(m,n)$, we will use the framework of Section~\ref{sec:duality}. If we let
\[
C = \{(x,y) \in \R^m \times \R^n : |x|,|y| \in \{ 0 \} \cup [1,\infty)\},
\]
then $\Delta_{\mathrm{prod}}(m,n)$ is the $C \setminus \{ (0,0) \}$-constrained linear programming bound.  A feasible dual solution is a measure $\mu$ supported on $C$ such that $\mu(\{(0,0)\}) = 1$ and $\widehat{\mu} \ge c \delta_{(0,0)}$.  In particular, feasible solutions $\nu_1$, $\nu_2$ for the dual linear programming bounds in $\R^m$ and $\R^n$ give a feasible solution for the dual of $\Delta_{\mathrm{prod}}(m,n)$ by letting $\mu = \nu_1 \otimes \nu_2$.   Because there is no duality gap (Theorem~\ref{theorem:nodualitygap}), this shows that $\Delta_2(m) \Delta_2(n) \le \Delta_{\mathrm{prod}}(m,n)$.

To prove the other direction, we will combine Lemma~\ref{lemma:thetaprimedisj} with Theorem~5.7 from \cite{rescaling2021}, which tells us that
\[
\Delta_2(n) = \vol(B_1^n) \lim_{r \to \infty} \frac{\vartheta'(rI^n)}{r^n},
\]
where $I^n$ is a unit cube in $\R^n$ and as usual $rI^n = \{rx : x \in I^n\}$. By Lemma~~\ref{lemma:thetaprimedisj},
\[
\vol(B^m_{1}) \vol(B^n_{1}) \frac{\vartheta'(r I^m*r I^n)}{r^{m+n}} \ge \Delta_{\mathrm{prod}}(m,n).
\]
By Theorem~\ref{theorem:multiplicative},
\[
\vartheta'(r I^m*r I^n) = \vartheta'(r I^m) \vartheta'(r I^n),
\]
and taking the limit as $r \to \infty$ shows that $\Delta_2(m) \Delta_2(n) \ge \Delta_{\mathrm{prod}}(m,n)$.
\end{proof}

\begin{remark}
For $m,n \in \{1,8,24\}$, the exact optimum is known for the linear programming bound, and therefore $\Delta_{\mathrm{prod}}(m,n)$ can also be determined exactly. We do not know whether there is a single feasible function that attains the optimum in each case. If so, it would be interesting to construct these functions explicitly.
\end{remark}

\section{Semidefinite programming formulations}\label{sec:sdp}

To obtain numerical results, we produce good feasible solutions to the three-point and lattice bounds using semidefinite programming. To do so, we parametrize the functions by polynomials times Gaussians and use sums of squares to model the inequality constraints. In what follows, $L_n^{\alpha}$ is the Laguerre polynomial of degree $n$ with parameter $\alpha$ (see, for example, \cite[Section~6.2]{andrews1999special}), and $P_\ell^n$ is the Jacobi polynomial $P_\ell^{((n-3)/2,(n-3)/2)}$ of degree $\ell$ with parameters $((n-3)/2,(n-3)/2)$ (see \cite[Section~6.3]{andrews1999special}).

\subsection{Sum-of-squares formulation for the truncated three-point bound}\label{sec:param-trunc}

We parametrize the two-point function $f_2$ by 
\[
f_2(x) = p_2(|x|^2) e^{-\pi |x|^2}
\]
for a single-variable polynomial $p_2$ with
\[
p_2(w) = -s_1(w) - (w - R^2) s_2(w),
\]
where $s_1$ and $s_2$ are sum-of-squares polynomials of degree $2d_2-2$. By construction $f_2(x)$ is nonpositive for $|x| \geq R$. To ensure that the Fourier transform of $f_2$ is nonnegative we add linear constraints (by equating coefficients in the monomial basis) for the identity
\[
\mathcal F(p_2) = s_3(u) + w \, s_4(u),
\]
where $s_3$ and $s_4$ are sum-of-squares polynomials of degree $2d_2-2$, and where $\mathcal F$ is the linear operator defined by
\[
\mathcal F(u^k) = \frac{k!}{\pi^k} L_k^{n/2-1}(\pi u),
\]
so that $x \mapsto \mathcal F(p)(|x|^2) e^{-\pi |x|^2}$ is the Fourier transform of $x \mapsto p(|x|^2) e^{-\pi|x|^2}$.

For the parametrization of the three-point function $f_3$ we use the following lemma.

\begin{lemma}\label{polynomials}
Let $p \in \R[x,y]$ be a polynomial of degree $4d_3-2$ in variables $x_1,\dots,x_n,y_1,\dots,y_n$. Suppose $p$ is positive definite as a kernel on $\R^n \times \R^n$ and is invariant under the diagonal action of $O(n)$. Then there are positive semidefinite matrices $A^{(0)},\dots,A^{(2d_3-1)}$ such that
\[
p(x,y) = \sum_{\ell=0}^{2d_3-1} \sum_{k,k'=0}^{d_3-\lfloor \ell/2 \rfloor-1} A_{k,k'}^{(\ell)} u^{k+\ell/2} v^{k'+\ell/2} P_\ell^n\mathopen{}\left(\frac{t}{\sqrt{uv}}\right)\mathclose{},
\]
where $u=|x|^2$, $v= |y|^2$, and $t = \langle x, y\rangle$.
\end{lemma}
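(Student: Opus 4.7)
The plan is to combine two classical tools: the spectral/Mercer decomposition used earlier in the paper (in the proof that $f_3 = p(x,y)e^{-\alpha(|x|^2+|y|^2)}$ with polynomial $p$ forces $f_3 = 0$), and the harmonic decomposition of polynomials on $\R^n$ together with the addition formula for spherical harmonics. Since $p$ is positive definite as a kernel, Mercer's theorem (or, since we are dealing with polynomials of bounded degree, the finite-dimensional spectral theorem applied to the Gram matrix of $p$ in a polynomial basis) gives a finite decomposition $p(x,y) = \sum_i f_i(x)f_i(y)$ with polynomials $f_i$, and because $\deg p \le 4d_3 - 2$, evaluating at $x=y$ forces $\deg f_i \le 2d_3 - 1$.

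Next I would decompose each $f_i$ via the classical harmonic decomposition $\R[x_1,\ldots,x_n] = \bigoplus_{k,\ell \ge 0} |x|^{2k} \mathcal{H}_\ell$, where $\mathcal{H}_\ell$ denotes the space of harmonic polynomials homogeneous of degree $\ell$. Fixing for each $\ell$ an orthonormal basis $\{H_\alpha^{(\ell)}\}_\alpha$ of $\mathcal{H}_\ell$ with respect to the $L^2(S^{n-1})$ inner product, write $f_i(x) = \sum_{k,\ell,\alpha} c^{(\ell)}_{i;k,\alpha} \, |x|^{2k} H_\alpha^{(\ell)}(x)$. Since $\deg f_i \le 2d_3-1$, only those $(k,\ell)$ with $2k + \ell \le 2d_3 - 1$ contribute, which is equivalent to $k \le d_3 - \lfloor \ell/2 \rfloor - 1$ (and $\ell \le 2d_3-1$), exactly the index range asserted in the lemma. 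Substituting this expansion back and collecting, one obtains
\[
p(x,y) = \sum_{\ell,\ell'} \sum_{k,k'} \sum_{\alpha,\beta} M^{(\ell,\ell')}_{(k,\alpha),(k',\beta)} \, |x|^{2k}H_\alpha^{(\ell)}(x) \, |y|^{2k'}H_\beta^{(\ell')}(y),
\]
where the matrix $M = (c^{(\ell)}_{i;k,\alpha})^\top (c^{(\ell')}_{i;k',\beta})$ is symmetric positive semidefinite.

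Now I would impose the diagonal $O(n)$-invariance of $p$. Averaging the above expression over $O(n)$ acting diagonally, Schur orthogonality for the spaces $\mathcal{H}_\ell$ (distinct irreducible representations of $O(n)$) forces all blocks with $\ell \ne \ell'$ to vanish, and within a fixed block the matrix must be $O(n)$-equivariant, hence proportional to $\delta_{\alpha\beta}$: there exist symmetric positive semidefinite matrices $A^{(\ell)} = (A^{(\ell)}_{k,k'})$ with
\[
p(x,y) = \sum_{\ell=0}^{2d_3-1} \sum_{k,k'} A^{(\ell)}_{k,k'} |x|^{2k}|y|^{2k'} \sum_\alpha H_\alpha^{(\ell)}(x) H_\alpha^{(\ell)}(y).
\]
The final step is the addition formula for spherical harmonics, which gives $\sum_\alpha H_\alpha^{(\ell)}(\xi)H_\alpha^{(\ell)}(\eta) = c_{n,\ell} P_\ell^n(\langle \xi,\eta\rangle)$ for unit vectors $\xi,\eta$; by homogeneity,
\[
\sum_\alpha H_\alpha^{(\ell)}(x)H_\alpha^{(\ell)}(y) = c_{n,\ell}\, (uv)^{\ell/2} P_\ell^n\!\left(\frac{t}{\sqrt{uv}}\right),
\]
(which is indeed a polynomial in $x,y$ because $P_\ell^n$ has the same parity as $\ell$). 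Absorbing $c_{n,\ell}$ into $A^{(\ell)}$ (which preserves positive semidefiniteness) yields exactly the claimed expansion.

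The only real subtlety is the degree-counting step that pins down the ranges of $k,k'$, and the verification that averaging over $O(n)$ indeed collapses the matrix to the block-diagonal form with the $\delta_{\alpha\beta}$ structure; this last point is a direct consequence of Schur's lemma applied to the inequivalent $O(n)$-representations $\mathcal{H}_\ell$, but it is the one place where one has to be careful, since one must check that the $O(n)$-average of a positive semidefinite matrix remains positive semidefinite (which it does, being a convex combination of conjugates) so that the averaged $A^{(\ell)}$ inherits positive semidefiniteness from the unaveraged $M$.
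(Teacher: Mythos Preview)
Your proposal is correct and follows essentially the same route as the paper's proof: both use a Mercer/spectral decomposition $p(x,y)=\sum_i f_i(x)f_i(y)$ with $\deg f_i\le 2d_3-1$ obtained from $\deg p(x,x)\le 4d_3-2$, expand the $f_i$ in the basis $|x|^{2k}H^{(\ell)}_\alpha(x)$ with $2k+\ell\le 2d_3-1$, and then exploit diagonal $O(n)$-invariance together with the addition formula to collapse to the stated block form. Your Schur-lemma phrasing of the averaging step and your explicit remark that the $O(n)$-average of the coefficient matrix (an average of orthogonal conjugates) stays positive semidefinite are exactly the content of the paper's integral identity $\int_{O(n)} Y_\ell^m(\gamma x)Y_{\ell'}^{m'}(\gamma y)\,d\gamma = c_\ell\,\delta_{\ell,\ell'}\delta_{m,m'}P_\ell^n(\langle x,y\rangle)$, just packaged differently.
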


Here $A^{(\ell)}$ is a $(d_3-\lfloor \ell/2 \rfloor) \times (d_3-\lfloor \ell/2 \rfloor)$ matrix, whose entries are indexed starting with $0$.

\begin{proof}
Consider the operator $T_p \colon \mathcal C([0,1]^n) \to \mathcal C([0,1]^n)$ defined by
\[
T_p f(x) = \int_{[0,1]^n} p(x,y) f(y) \, dy. 
\]
Since $p$ defines a continuous, positive definite kernel $[0,1]^n \times [0,1]^n \to \R$, by Mercer's theorem \cite[Theorem~3.11.9]{SimonVol4} there exists an orthonormal basis $\{e_i\}$ of $L^2([0,1]^n)$ consisting of eigenfunctions of $T_p$ with nonnegative eigenvalues $\lambda_i$. The eigenfunctions corresponding to nonzero eigenvalues are continuous, and 
\begin{equation}\label{eq:mercer}
p(x,y) = \sum_{\lambda_i >0} \lambda_i e_i(x) e_i(y),
\end{equation}
where convergence is uniform and absolute.

Since $p$ is a polynomial of degree $4d_3-2$, the image of $T_p$ consists of polynomials of degree at most $4d_3-2$, and so the eigenfunctions corresponding to nonzero eigenvalues are polynomials of degree at most $4d_3-2$. This implies the terms on the right hand side of \eqref{eq:mercer} are polynomials of degree at most $4d_3-2$, which shows that \eqref{eq:mercer} holds for all $x, y \in \R^n$. Since $p(x,x)$ has degree $4d_3-2$, it follows that each $e_i$ has degree at most $2d_3-1$.  

Let $Y_\ell^1,\dots,Y_\ell^{m_\ell}$ be a basis for the real spherical harmonics of degree $\ell$, with
\[
\int_{S^{n-1}} Y_{\ell}^m(x) Y_{\ell'}^{m'}(x) \, dx = \delta_{\ell,\ell'} \delta_{m,m'}.
\]
Since $e_i$ is a polynomial of degree $2d_3-1$, it is a linear combination of functions of the form
\[
\phi_{\ell,m,k}(x) = |x|^{2k+\ell} Y_{\ell}^m\mathopen{}\left(\frac{x}{|x|}\right)\mathclose{}
\]
for nonnegative integers $k$ and $\ell$ satisfying $2k+\ell \le 2d_3-1$ and $m=1,\dots,m_\ell$. 
The conditions on $k$ and $\ell$ amount to $0\leq \ell \leq 2d_3-1$ and $0 \leq k \leq d_3-\lfloor \ell/2 \rfloor - 1$. This shows $p$ is of the form
\[
p(x,y) = \sum_{\ell,\ell'=0}^{2d_3-1} \sum_{m=1}^{m_\ell} \sum_{m'=1}^{m_{\ell'}} \sum_{k=0}^{d_3-\lfloor \ell/2 \rfloor - 1} \sum_{k'=0}^{d_3-\lfloor \ell'/2 \rfloor - 1}  A_{(\ell,m,k),(\ell',m',k')} \phi_{\ell,m,k}(x) \phi_{\ell',k',m'}(y)
\]
for some positive semidefinite matrix $A$.

By $O(n)$-invariance of $p$,
\[
p(x,y) = \sum_{\ell,m,k}  \sum_{\ell',m',k'} A_{(\ell,m,k),(\ell',m',k')} \frac{\int_{O(n)} \phi_{\ell,m,k}(\gamma x) \phi_{\ell',k',m'}(\gamma y)\, d\gamma}{\int_{O(n)} d\gamma},
\]
and so the result follows from the identity
\[
\int_{O(n)} Y_\ell^m(\gamma x) Y_{\ell'}^{m'}(\gamma y)\, d\gamma = c_\ell  \delta_{\ell,\ell'} \delta_{m,m'} P_\ell^n(\langle x, y\rangle),
\]
which holds for some constant $c_\ell$.
\end{proof}

Define $f_3$ by 
\[
f_3(x,y) = p_3(|x|^2, |y|^2, \langle x, y \rangle) e^{-\pi (|x|^2 + |y|^2)},
\]
where
\[
p_3(u,v,t) = \sum_{\ell=0}^{2d_3-1} \sum_{k,k'=0}^{d_3-\lfloor \ell/2 \rfloor-1} A_{k,k'}^{(\ell)} u^{k+\ell/2} v^{k'+\ell/2} P_\ell^n\mathopen{}\left(\frac{t}{\sqrt{uv}}\right)\mathclose{},
\]
and where $A^{(\ell)}$ is a positive semidefinite matrix for each $\ell$. 

Let $\phi \colon \R^{2n} \to \R^3$ be the map $\phi(x,y) = (|x|^2, |y|^2, \langle x, y\rangle)$. In the following lemma we give polynomials describing the semialgebraic set 
\[
S = \phi(\{(x,y) : r \leq|x|,|y| \leq R,\, |x-y| \ge r\}).
\]
These polynomials are invariant under permutation of $u$ and $v$, which  allows us to use invariant sum-of-squares polynomials (see Section~\ref{sec:sdpformulation}).

\begin{lemma}
The set $S$ consists of the tuples $(u,v,t)$ satisfying 
\begin{align*}
&(u-r^2)(v-r^2) \ge 0,\\
&u+v - 2r^2 \ge 0,\\
&uv - t^2 \ge 0,\\
&u+v-2t-r^2 \ge 0,\\
&(R^2-u)(R^2-v) \ge 0, \text{and}\\
&2R^2 - u - v \ge 0.
\end{align*}
\end{lemma}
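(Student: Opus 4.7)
The plan is to prove both inclusions by showing that the six listed inequalities are equivalent to the more direct constraints $r^2 \le u,v \le R^2$, $uv - t^2 \ge 0$, and $u + v - 2t - r^2 \ge 0$, and then that this latter system precisely characterizes the image of $\phi$.

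First I would handle the forward inclusion $S \subseteq \{(u,v,t) : \text{six inequalities}\}$. Given $(x,y)$ with $r \le |x|,|y| \le R$ and $|x-y|\ge r$, the assignments $u = |x|^2$, $v = |y|^2$, $t = \langle x,y\rangle$ immediately give $u,v \in [r^2,R^2]$, so each factor of $(u-r^2)(v-r^2)$ and $(R^2-u)(R^2-v)$ is nonnegative and $u+v-2r^2 \ge 0$, $2R^2-u-v \ge 0$ hold. The inequality $uv - t^2 \ge 0$ is Cauchy-Schwarz, and $u+v-2t-r^2 = |x-y|^2 - r^2 \ge 0$ by hypothesis.

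For the reverse inclusion I would argue as follows. Assume $(u,v,t)$ satisfies the six inequalities. The key observation is that $(u-r^2)(v-r^2) \ge 0$ together with $u+v-2r^2 \ge 0$ forces both $u \ge r^2$ and $v \ge r^2$: if both were less than $r^2$ their sum would fall below $2r^2$, and if one were less and the other greater the product $(u-r^2)(v-r^2)$ would be negative. The symmetric argument with $R^2$ replacing $r^2$ yields $u,v \le R^2$. Thus in particular $u,v > 0$ and $uv \ge t^2$, so I can realize $(u,v,t)$ by choosing $x = (\sqrt{u},0,\dots,0)$ and $y = (t/\sqrt{u},\sqrt{v - t^2/u},0,\dots,0)$ in $\R^n$ (using $n\ge 2$). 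Then $|x|^2=u$, $|y|^2=v$, $\langle x,y\rangle=t$, and the conditions $r \le |x|,|y| \le R$ are automatic, while $|x-y|^2 = u+v-2t \ge r^2$ follows from the remaining inequality. Hence $(u,v,t) \in S$.

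This is essentially routine; there is no real obstacle. The only design choice worth flagging is why we prefer the pair $\{(u-r^2)(v-r^2) \ge 0,\ u+v-2r^2 \ge 0\}$ over the simpler pair $\{u \ge r^2,\ v \ge r^2\}$ (and analogously at $R^2$): the former pair is invariant under swapping $u$ and $v$, which is what makes the set eligible for an $S_2$-invariant sum-of-squares description and thereby meshes with the symmetrized parametrization of $f_3$ used in Section~\ref{sec:sdpformulation}. I would mention this in a short closing remark so the reader understands the otherwise puzzling choice of generators.
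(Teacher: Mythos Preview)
Your proof is correct and follows essentially the same route as the paper's: verify the six inequalities directly for the forward inclusion, then for the reverse inclusion deduce $u,v\in[r^2,R^2]$ from the paired product-and-sum constraints, use $uv\ge t^2$ to realize $(u,v,t)$ by an explicit pair $(x,y)$, and read off $|x-y|\ge r$ from the remaining inequality. Your added remark on the $S_2$-invariance of the chosen generators and the explicit two-coordinate construction (with the tacit assumption $n\ge 2$) are helpful elaborations beyond what the paper spells out, but the underlying argument is the same.
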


\begin{proof}
If $x,y \in \R^n$ with $r \leq|x|,|y| \leq R$ and $|x-y| \ge r$, then it is straightforward to check that
\[
(|x|^2, |y|^2, \langle x, y\rangle) \in S.
\]
Conversely, if $(u,v,t) \in S$, then the first two constraints imply that $u, v \ge r^2$. The third constraint then implies that there exist points $x,y \in \R^n$ with $u = |x|^2$, $v = |y|^2$, and $t = \langle x, y\rangle$. The fourth constraint implies that $|x-y| \geq r$, and the final two constraints imply that $|x|, |y| \leq R$.
\end{proof}

The constraint $f_3(x,y) \leq 0$ for $r \leq |x|,|y| \leq R$ and $|x-y| \ge r$ is equivalent to $p_3(u,v,t) \leq 0$ for $(u,v,t) \in S$, which can be modeled by adding  linear constraints that ensure the identity 
\begin{align*}
p_3(u,v,t) &= - S_1(u,v,y) - (u-r^2)(v-r^2)S_2(u,v,t) - (u+v-2r^2)S_3(u,v,t)\\
&\qquad - (uv-t^2) S_4(u,v,y) - (u+v-2t-r^2)S_5(u,v,t) \\
&\qquad- (R^2-u)(R^2-v)S_6(u,v,t) - (2R^2 -u-v)S_7(u,v,t)
\end{align*}
holds,  where $S_1,\dots,S_7$ are sum-of-squares polynomials with $S_i(u,v,t) = S_i(v,u,t)$ such that $S_1$ has total degree $2d_3$ and $S_2,\dots,S_7$ have total degree $2d_3-2$.

The constraint 
\[
f_2(x) + f_3(x,0) + f_3(0,x) + f_3(x,x) \leq 0
\]
for $r \leq |x| \leq R$ is equivalent to 
\begin{equation} \label{eq:nonpoly}
p_2(u) + p_3(u,0,0) + p_3(0,u,0) + p_3(u,u,u) e^{-\pi u} \leq 0
\end{equation}
for $r^2 \leq u \leq R^2$. This is not a polynomial inequality, but we can choose a nonnegative integer $d_{\mathrm{taylor}}$ and use the inequality
\[
e^{-\pi u} \le T(u) := \sum_{k=0}^{2d_\mathrm{taylor}} \frac{(-\pi)^k}{k!} e^{-\pi r^2} (u-r^2)^k,
\]
which holds for $u\ge r^2$.
Since $p_3(u,u,u) \ge 0$ (the diagonal of a positive definite kernel is nonnegative), the constraint \eqref{eq:nonpoly} is implied by 
\[
p_2(u) + p_3(u,0,0) + p_3(0,u,0) + p_3(u,u,u) T(u) = -s_5(u) - us_6(u),
\]
where $s_5$ and $s_6$ are sum-of-squares polynomials of degree $\max\{2d_2, 2d_3+2d_\mathrm{taylor}\}$ and $\max\{2d_2, 2d_3+2d_\mathrm{taylor}\}-2$, respectively.

\subsubsection{Alternative parametrizations}

For Theorem~\ref{thm:threepoint} the values of $f_3$ outside of the compact region $\{(x,y) : |x|, |y| \leq R\}$ are not relevant. This means we could model $f_3$ by a polynomial $p_3(|x|^2,|y|^2,\langle x,y\rangle)$, as opposed to a polynomial times a Gaussian, while keeping $f_2(x) = p_2(|x|^2) e^{-\pi |x|^2}$. Modeling the constraint
\[
f_2(x) + f_3(x,0) + f_3(0,x) + f_3(x,x) \leq 0,
\]
however, becomes less convenient: in the computations the degree of $p_2$ will be much higher than the degree of $p_3$, and for this reason it is preferable to multiply the constraint by $e^{\pi |x|^2}$, which leads to the constraint
\[
p_2(u) + (p_3(u,0,0) + p_3(0,u,0) + p_3(u,u,u)) e^{\pi u} \leq 0.
\]
We can then use a Taylor expansion of $e^{\pi u}$, and although this will not give rigorous upper bounds, we can use it to obtain a good indication of the strength of this bound. In dimension $3$ with $d_2 = d_\mathrm{taylor} = 24$, $d_3=7$, and $r=1$ we get the bound $0.774026$ after optimizing for $R$. This is worse than the bound $0.772312$ we get with the method described earlier for the same parameters.

For the alternative three-point bound using the weaker constraint 
\[
f_3(x,y) + f_3(-x,y-x) + f_3(-y,x-y) \leq 0
\]
we also parametrize $f_3$ by a polynomial as opposed to a polynomial times a Gaussian, and computations with the same parameters as above give the bound $0.773862$. 

Since these two approaches seem to give worse results, we did not investigate them further.

\subsection{Sum-of-squares formulation for the lattice three-point bound}\label{sec:parlat}

To find good functions satisfying the hypothesis of Theorem~\ref{thm:latticethreepoint}, we optimize over functions of the form
\[
f(x,y) = p(|x|^2, |y|^2, \langle x, y\rangle^2) e^{- \pi(|x|^2 + |y|^2)},
\]
where $p(u,v,t) \in \bigoplus_{k=0}^{d} t^k \R[u,v]_{2d-2k}$ satisfies $p(u,v,t) = p(v,u,t)$ for all $u,v,t$. Here $\R[u,v]_s$ is the set of polynomials in $u$ and $v$ of total degree at most $s$.

\begin{lemma}
Define the map $\phi \colon \R^n \times \R^n \to \R^3$ by $\phi(x,y) = (|x|^2, |y|^2, \langle x, y \rangle^2)$, and consider the inequalities
\begin{enumerate}
\item $(u-r^2)(v-r^2) \ge 0$, 
\item $u+v-2r^2 \ge 0$,
\item $t(uv-t) \ge 0$,
\item $(u+v-r^2)^2-4t\ge 0$,
\item $(u+4v-r^2)^2+(4u+v-r^2)^2-32t\ge 0$,
\item $((u+4v-r^2)^2-16t)((4u+v-r^2)^2-16t)\ge 0$.
\end{enumerate}
For $r > 0$, the set $\phi(\{(x,y) : |x|, |y|, |x+y|, |x-y| \geq r\})$
consists of the points $(u,v,t)$ satisfying inequalities (1)--(4), and the set
\[
\phi(\{(x,y) : |x|, |y|, |x\pm y|, |x\pm 2y|, |2x \pm y| \geq r\})
\]
consists of the points $(u,v,t)$ satisfying inequalities (1)--(6).
\end{lemma}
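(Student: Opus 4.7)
The plan is to translate each geometric condition $|z|\geq r$ (for $z\in\{x,y,x\pm y,x\pm 2y,2x\pm y\}$) into a polynomial inequality on $(u,v,t)=(|x|^2,|y|^2,\langle x,y\rangle^2)$, and then verify both inclusions. I would first handle the conditions common to both parts. Inequalities (1) and (2) together are equivalent to $u,v\geq r^2$: (1) forces $u-r^2$ and $v-r^2$ to share a sign, while (2) rules out the case that they are both nonpositive (apart from the degenerate $u=v=r^2$). Inequality (3), in the presence of $u,v>0$, is equivalent to $0\leq t\leq uv$, which is exactly the Cauchy--Schwarz window that characterizes when a triple $(u,v,t)$ lies in the image of $\phi$. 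Given any $(u,v,t)$ satisfying (1)--(3), an explicit preimage is $x=(\sqrt u,0,\ldots,0)$ and $y=(\sqrt{t/u},\sqrt{v-t/u},0,\ldots,0)$, provided $n\geq 2$, so the image set is cut out by (1)--(3) plus whichever further polynomial inequalities encode the remaining conditions $|z|\geq r$.

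For the first statement, the key identity is $|x\pm y|^2=u+v\pm 2\langle x,y\rangle$. Requiring $|x\pm y|\geq r$ for both signs simultaneously is equivalent to $u+v-r^2\geq 2|\langle x,y\rangle|$, and since (1) and (2) force $u+v-r^2\geq r^2>0$, this is in turn equivalent to $(u+v-r^2)^2\geq 4\langle x,y\rangle^2=4t$, which is (4). The forward inclusion follows by applying this squaring argument to any $(x,y)$ in the preimage, and the reverse inclusion uses the same squaring step together with the explicit realization above.

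For the second statement, I would repeat the same squaring idea with $|x\pm 2y|^2=u+4v\pm 4\langle x,y\rangle$ and $|2x\pm y|^2=4u+v\pm 4\langle x,y\rangle$, producing the pair of inequalities $A\geq 0$ and $B\geq 0$, where $A=(u+4v-r^2)^2-16t$ and $B=(4u+v-r^2)^2-16t$; the nonnegativity of $u+4v-r^2$ and $4u+v-r^2$ needed to square back is immediate from $u,v\geq r^2$. The main obstacle is recognizing that the conjunction ``$A\geq 0$ and $B\geq 0$'' is captured exactly by (5) and (6) taken together, which assert $A+B\geq 0$ and $AB\geq 0$ respectively: (6) forces $A$ and $B$ to share a sign, and (5) then pins that sign to be nonnegative (in the alternative case both would be nonpositive, so $A+B\leq 0$ forces $A=B=0$). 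Neither (5) nor (6) alone suffices, but the two together close both inclusions for the second image set.
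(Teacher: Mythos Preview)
Your proposal is correct and takes essentially the same approach as the paper's proof. You are actually more explicit than the paper in two places: you give a concrete preimage under $\phi$, and you spell out why $A+B\geq 0$ and $AB\geq 0$ together force $A,B\geq 0$, whereas the paper simply asserts these conclusions.
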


Inequalities (5) and (6) are not needed for the lattice three-point bound from Theorem~\ref{thm:latticethreepoint}, but they become relevant for refinements of this bound.

\begin{proof}
If $ |x|, |y|, |x\pm y| \geq r$, then it follows immediately that inequalities (1)--(4) are satisfied for $(u,v,t) = \phi(x,y)$, and if in addition $|x \pm 2y|, |2x\pm y| \geq r$, then inequalities (5)--(6) are also satisfied. 

For the other direction, we first observe that the first two inequalities together imply $u \ge r^2$ and $v \ge r^2$. The third inequality then implies there exist points $x,y \in \R^n$ such that $u = |x|^2$, $v = |y|^2$, and $t = \langle x, y \rangle^2$. The fourth inequality gives $4 \langle x, y\rangle^2 \leq (|x|^2 + |y|^2 -r^2)^2$, which implies 
\[
|x|^2 + |y|^2 \pm 2 \langle x, y\rangle^2 \geq r^2
\]
since $|x|^2 + |y|^2 -r^2$ is nonnegative. This implies $|x+y|, |x-y| \geq r$.

Inequalities (5)--(6) together imply $(u+4v-r^2)^2 \geq 16t$ and $(4u+v-r^2)^2 \geq 16t$, which shows that $|x \pm 2y|, |2x \pm y| \geq r$. 
\end{proof}

For $f$ to satisfy the constraint $f(x,y) \leq 0$ for $|x|, |y| \geq r$ and $|x\pm y|\geq r$, we define $p$ by
\begin{align*}
p(u,v,t) &:= - s_1(u,v,t) - (u-r^2)(v-r^2) s_2(u,v,t)- (u+v-2r^2) s_3(u,v,t)\\
&\qquad - t(uv-t)s_4(u,v,t) - ((u+v-r^2)^2-4t) s_5(u,v,t),
\end{align*}
where $s_1,\dots,s_5$ are sum-of-squares polynomials of appropriate degrees that satisfy $s_i(u,v,t) = s_i(v,u,t)$ for all $u,v,t$. 

For $f$ to satisfy $f(x,x), f(x,-x),f(x,0), f(0,x) \le 0$ for $|x| \ge r$ we require that
\[
p(u,u,u^2) = - (u-r^2) s_6(u) 
\]
and
\[
p(u,0,0) = - (u-r^2) s_7(u) 
\]
where $s_6$ and $s_7$ are sum-of-squares polynomials.

For the constraint $\widehat f \geq 0$ we use the following lemma.

\begin{lemma}\label{lem:fourier}
The Fourier transform of the Schwartz function $f \colon \R^n \times \R^n \to \R$ defined by
\[
f(x,y) = |x|^{2a+2k} |y|^{2b+2k} P_{2k}^{n}\mathopen{}\left(\frac{\langle x,y \rangle}{|x|\,|y|}\right)\mathclose{} e^{-\pi (|x|^2 + |y|^2)}
\]
is given by
\begin{equation*}\label{eq:fourier}
\widehat f(x,y) = \frac{a!b!}{\pi^{a+b}} L_a^{2k+\frac{n}{2}-1}(\pi |x|^2) L_b^{2k+\frac{n}{2}-1}(\pi |y|^2) |x|^{2k} |y|^{2k} P_{2k}^{n}\mathopen{}\left(\frac{\langle x,y \rangle}{|x|\,|y|}\right)\mathclose{} e^{-\pi (|x|^2 + |y|^2)}.
\end{equation*}
\end{lemma}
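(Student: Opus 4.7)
The plan is to use the addition formula for spherical harmonics to turn $f$ into a finite sum of tensor products, apply a standard Hecke-Bochner-Laguerre identity in each factor, and recombine via the addition formula in reverse. Concretely, fix an orthonormal basis $\{Y_m\}$ of the space of homogeneous harmonic polynomials of degree $2k$ on $\R^n$. The addition formula for Gegenbauer/Jacobi polynomials yields a constant $c_{n,k}$ with
\[
|x|^{2k}|y|^{2k}\, P_{2k}^{n}\!\left(\frac{\langle x,y\rangle}{|x|\,|y|}\right) = c_{n,k}\sum_m Y_m(x)\, Y_m(y)
\]
for all $x,y\in\R^n\setminus\{0\}$, and hence for all $x,y$ by continuity (both sides are polynomials). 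Splitting $|x|^{2a+2k}=|x|^{2a}\cdot|x|^{2k}$ and similarly in $y$ rewrites
\[
f(x,y) = c_{n,k}\sum_m \bigl(|x|^{2a} Y_m(x) e^{-\pi|x|^2}\bigr)\bigl(|y|^{2b} Y_m(y) e^{-\pi|y|^2}\bigr),
\]
so $\widehat f(\xi,\eta)$ factors as the corresponding sum of tensor products of one-variable Fourier transforms.

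I would next invoke the Hecke-Bochner-Laguerre identity: for any homogeneous harmonic polynomial $Y$ of degree $\ell$ on $\R^n$ and any integer $a\ge 0$,
\[
\mathcal F\bigl[|x|^{2a} Y(x) e^{-\pi|x|^2}\bigr](\xi) = \frac{a!}{\pi^{a}}\, i^{-\ell}\, L_a^{\ell+n/2-1}(\pi|\xi|^2)\, Y(\xi)\, e^{-\pi|\xi|^2}.
\]
The case $a=0$ is the classical Hecke identity $\mathcal F[Y(x)e^{-\pi|x|^2}](\xi) = i^{-\ell} Y(\xi) e^{-\pi|\xi|^2}$. The general case follows by expanding $e^{-\pi s|x|^2}$ in powers of $s-1$, applying the Hecke identity with Gaussian width $s$, and matching coefficients against the Laguerre generating function; alternatively, one reduces to a Hankel transform via Bochner's formula and uses the known Hankel transform of $r^{2a+\ell} e^{-\pi r^2}$. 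Applying the identity with $\ell=2k$ in each factor, and using that $i^{-2k}\cdot i^{-2k}=i^{-4k}=1$, produces
\[
\widehat f(\xi,\eta) = \frac{a!\,b!}{\pi^{a+b}}\, L_a^{2k+n/2-1}(\pi|\xi|^2)\, L_b^{2k+n/2-1}(\pi|\eta|^2)\, e^{-\pi(|\xi|^2+|\eta|^2)}\, c_{n,k}\sum_m Y_m(\xi)\, Y_m(\eta).
\]

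Recombining the remaining sum using the addition formula in reverse turns $c_{n,k}\sum_m Y_m(\xi)Y_m(\eta)$ back into $|\xi|^{2k}|\eta|^{2k} P_{2k}^{n}(\langle\xi,\eta\rangle/(|\xi||\eta|))$, reproducing the claimed formula. The principal technical obstacle is pinning down the Hecke-Bochner-Laguerre identity in the paper's Fourier normalization, including the precise powers of $\pi$ on the right-hand side; once that identity is in hand, the clean cancellation of the combinatorial constant $c_{n,k}$ and the simplification $i^{-4k}=1$ make the remaining work essentially bookkeeping. A sanity check at $a=b=k=0$ reproduces $\mathcal F[e^{-\pi|x|^2}]=e^{-\pi|\xi|^2}$, and $a=1$, $b=k=0$ reproduces the familiar $\mathcal F[|x|^2 e^{-\pi|x|^2}] = \pi^{-1} L_1^{n/2-1}(\pi|\xi|^2) e^{-\pi|\xi|^2}$, confirming the normalization.
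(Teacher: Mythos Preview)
Your proposal is correct and follows essentially the same route as the paper: decompose via the addition formula into a finite sum of tensor products $F_m(x)F_m(y)$ with $F_m(x)=|x|^{2a}Y_{2k}^m(x)e^{-\pi|x|^2}$, compute $\widehat{F}_m$ individually, and recombine. The only difference is in how the key one-variable identity is justified: the paper reduces $\widehat{F}_m$ to a Hankel transform via the Bochner formula (Theorem~9.10.5 in Andrews--Askey--Roy) and then evaluates the resulting Bessel integral using a formula from Buchholz, whereas you name the identity and sketch both that derivation and an alternative via the Laguerre generating function; either works.
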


\begin{proof}
For $x,y \in S^{n-1}$, the addition theorem for Gegenbauer polynomials says that
\[
P_{2k}^n(x \cdot y) = \sum_m Y_{2k}^m(x) Y_{2k}^m(y),
\]
where the functions $Y_k^m$ are suitably normalized spherical harmonics in dimension $n$.
It follows that
\[
f(x,y) = \sum_m F_m(x) F_m(y),
\]
where
\[
F_m(x) = |x|^{2a+2k} Y_{2k}^l\mathopen{}\left(\frac{x}{|x|}\right)\mathclose{} e^{-\pi|x|^2} = |x|^{2a} Y_{2k}^m(x) e^{-\pi|x|^2}.
\]
The Fourier transform is therefore given by 
\[
\widehat f(x,y) = \sum_m \widehat F_m(x) \widehat F_m(y),
\]
where $\widehat F_m$ is the $n$-dimensional Fourier transform of $F_m$.

By Theorem~9.10.5 of \cite{andrews1999special}, the Fourier transform of $F_m(x)$ is
\[
\widehat F_m(x) = 2\pi i^{2k} |x|^{-2k-\frac{n}{2}+1} Y_{2k}^m(x) \int_0^\infty s^{2k+2a+\frac{n}{2}} e^{-\pi s^2} J_{2k+\frac{n}{2}-1}(2\pi|x|s) \, ds,
\]
where $J_\alpha$ is the Bessel function of the first kind of order $\alpha$.

By \cite[Section~12.1, Formula~(5)]{buchholz69},
\[
\int_0^\infty e^{-t^2} t^{2\xi+\mu+1} J_\mu(2t\sqrt{z}) \, dt = \frac{\xi!}{2} e^{-z} z^{\frac{\mu}{2}} L_\xi^\mu(z)
\]
for $\xi \in \N_0$ and $\mu \in \C$ with $\xi + \mathrm{Re}(\mu) > -1$. The variable substitutions $z = \pi |x|^2$ and $t = \sqrt\pi s$ give
\[
\int_0^\infty e^{-\pi s^2} s^{2\xi+\mu+1} J_\mu(2\pi |x|s) \, ds = \frac{\xi!}{2\pi^{\xi+1}} e^{-\pi |x|^2} |x|^\mu L_\xi^\mu(\pi |x|^2),
\]
and we then set
$\mu = 2k+\frac{n}{2}-1$ and $\xi = a$ to obtain
\begin{align*}
\int_0^\infty e^{-\pi s^2}& s^{2k+2a+\frac{n}{2}} J_{2k+\frac{n}{2}-1}(2\pi |x|s) \, ds \\
&= \frac{a!}{2\pi^{a+1}} e^{-\pi |x|^2} |x|^{2k+\frac{n}{2}-1} L_a^{2k+\frac{n}{2}-1}(\pi |x|^2).
\end{align*}
We conclude that
\begin{align*}
\widehat F_m(x) &= 2\pi i^{2k} |x|^{-2k-\frac{n}{2}+1} Y_{2k}^m(x) \frac{a!}{2\pi^{a+1}} e^{-\pi |x|^2} |x|^{2k+\frac{n}{2}-1} L_a^{2k+\frac{n}{2}-1}(\pi |x|^2)\\
&= i^{2k} \frac{a!}{\pi^a} L_a^{2k+\frac{n}{2}-1}(\pi |x|^2) Y_{2k}^m(x) e^{-\pi |x|^2}\\
&= (-1)^k \frac{a!}{\pi^a} L_a^{2k+\frac{n}{2}-1}(\pi |x|^2) |x|^{2k} Y_{2k}^m(x/|x|) e^{-\pi |x|^2},
\end{align*}
from which we can obtain the Fourier transform of $f$ by using the addition formula again.
\end{proof}

For the constraint $\widehat f \geq 0$ we require
\[
\mathcal F(p)(u,v,t) = s_8(u,v,t) + (u+v)s_{9}(u,v,t) + uv\,s_{10}(u,v,t) + t(uv-t)s_{11}(u,v,t),
\]
where $s_8,\dots,s_{11}$ are sum-of-squares polynomials of appropriate degree that satisfy $s_i(u,v,t) = s_i(v,u,t)$ for all $u,v,t$. Here $\mathcal F$ is the linear operator such that
\[
\mathcal F(p)(|x|^2,|y|^2,\langle x, y\rangle^2) e^{-\pi (|x|^2+|y|^2)}
\]
is the Fourier transform of 
\[
p(|x|^2,|y|^2,\langle x, y\rangle^2) e^{-\pi (|x|^2+|y|^2)},
\]
which is determined by Lemma~\ref{lem:fourier}.
Finally, for the constraint $\widehat f(0)=1$ we require
\[
\mathcal F(p)(0,0,0) = 1.
\]
Since $f$ is specified by positive semidefinite matrices and linear constraints, the problem of minimizing $f(0) = p(0,0,0)$ is a semidefinite program. As $d \to \infty$, we expect that this semidefinite program converges to the optimal bound $\Delta_{\mathrm{lat},3}$, and in any case it yields valid upper bounds for $\Delta_{\mathrm{lat},3}$.

\subsection{Semidefinite programming formulations}\label{sec:sdpformulation}

We can use standard techniques to write the sum-of-squares formulations from Sections~\ref{sec:param-trunc} and~\ref{sec:parlat} as semidefinite programs.  

For this we parametrize a univariate sum-of-squares polynomial of degree $2d$ as $v(x)^{\sf T} X v(x)$, where $X$ ranges over the positive semidefinite matrices of size $d+1$ and $v(x)$ is the column vector of monomials in $x$ up to degree $d$.

To parametrize a sum-of-squares polynomial $s(u,v,t)$ of degree $2d$ satisfying $s(u,v,t) = s(v,u,t)$ using positive semidefinite matrix variables, we note that
\[
\R[u,v,t] = \R[u+v, uv, t] \oplus (u-v) \R[u+v, uv, t],
\]
where $\R[u+v, uv, t]$ is the invariant ring.
Let $w_d(u,v,t)$ be a vector whose entries form a basis for the polynomials in $\R[u+v,uv,t]$ of total degree (in $u$, $v$, and $t$) at most $d$. It follows we can write a degree $2d$ sum-of-squares polynomial $q$ that satisfies $q(u,v,t) = q(v,u,t)$ for all $u,v,t$ as
\[
q(u,v,t) = w_d(u,v,t)^{\sf T} X_1 w_d(u,v,t) + (u-v)^2 w_{d-1}(u,v,t)^{\sf T} X_2 w_{d-1}(u,v,t),
\]
where $X_1$ and $X_2$ are positive semidefinite matrices, because the sum of the cross terms in $(u-v) \R[u+v, uv, t]$ must vanish.

Note that our semidefinite program for lattice bounds is completely different from that of Pendavingh and van Zwam \cite{pendavingh2007new}.  While their technique uses semidefinite relaxations for the problem of optimizing a polynomial over a finite-dimensional semi-algebraic set cut out by Korkin-Zolotarev inequalities, our technique uses semidefinite relaxation for an infinite-dimensional set of measures.

\section{Numerical results}\label{sec:numerical}

Our code and data are available from the MIT Libraries DSpace@MIT repository at \url{https://hdl.handle.net/1721.1/143590}.

\subsection{Results for the truncated three-point bounds}

To find good functions for the bound $\Delta^+_3$ we use the semidefinite programming formulation from Section~\ref{sec:param-trunc}. We fix $n$, $r$, $d_2$, $d_3$, and $d_\mathrm{taylor}$ and optimize for $R$ using Brent's method, which finds a local minimum. 
Table~\ref{table:summary} shows the results for $r=1$, $d_2=24$, $d_3=9$, and $d_\mathrm{taylor}=24$, while Table~\ref{table:3alldegrees} shows how much the bounds improve by increasing the degree $d_3$ from $6$ to $9$.

\begin{table}
\caption{The three-point bound $\Delta_3^+$ for different dimensions $n$ and polynomial degrees $d$. The row $\Delta_2$ gives the linear programming bound and the row LB (``lower bound'') gives the conjecturally optimal sphere packing density. All reported bounds are rigorous upper bounds on the sphere packing density.}
\label{table:3alldegrees}
\begin{center}
\begin{tabular}{ccccccc}
\toprule
$d$\textbackslash $n$ & $3$ & $4$ & $5$ & $6$ & $7$\\
\midrule
$\Delta_2$ & $0.779746762$ & $0.647704966$ & $0.524980022$ & $0.417673416$ & $0.327455611$\\
\midrule
$6$ & $0.773719142$ & $0.637500670$ & $0.513167025$ & $0.411786389$ & $0.322580299$\\
$7$ & $0.772302942$ & $0.636860131$ & $0.512732432$ & $0.410712688$ & $0.321690513$\\
$8$ & $0.770801756$ & $0.636332298$ & $0.512662102$ & $0.410354869$ & $0.321229615$\\
$9$ & $0.770270440$ & $0.636107333$ & $0.512645131$ & $0.410303282$ & $0.321147106$\\
\midrule
LB & $0.740480489$ & $0.616850275$ & $0.465257613$ & $0.372947545$ & $0.295297873$\\
\bottomrule
\end{tabular}
\end{center}
\end{table}

\subsection{Results for the lattice three-point bounds}

Table~\ref{table:summary} shows the  results for the lattice three-point bound computed using the parametrization described in Section~\ref{sec:parlat} with $d=13$. The table includes only dimensions $n \le 9$, because the values with $d=13$ become worse than the linear programming bound when $n > 9$.
Table~\ref{table:latalldegrees} and Figure~\ref{figure:latticedata} show how the bounds change for $d = 4,\dots,13$ and $3 \le n \le 7$.

\begin{table}
\caption{The lattice bound $\Delta_\mathrm{lat}$ for different dimensions $n$ and polynomial degrees $d$. The row $\Delta_2$ gives the linear programming bound and the row LB gives the optimal lattice sphere packing density. All reported bounds are rigorous upper bounds on the lattice sphere packing density.}
\label{table:latalldegrees}
\begin{center}
\begin{tabular}{ccccccc}
\toprule
$d$\textbackslash $n$ & $3$ & $4$ & $5$ & $6$ & $7$\\
\midrule
$\Delta_2$ & $0.779746762$ & $0.647704966$ & $0.524980022$ & $0.417673416$ & $0.327455611$\\
\midrule
$4$ & $0.788825992$ & $0.663869735$ & $0.544077206$ & $0.447400969$ & $0.375137855$\\
$5$ & $0.781889135$ & $0.653653322$ & $0.538153396$ & $0.435036532$ & $0.352071693$\\
$6$ & $0.767694937$ & $0.637355636$ & $0.528579742$ & $0.426524051$ & $0.341978846$\\
$7$ & $0.758431473$ & $0.624695218$ & $0.510775620$ & $0.420101441$ & $0.333233065$\\
$8$ & $0.752561887$ & $0.619935194$ & $0.499296524$ & $0.409441538$ & $0.329525254$\\
$9$ & $0.748788795$ & $0.617926232$ & $0.493598452$ & $0.397958350$ & $0.327756923$\\
$10$ & $0.746503469$ & $0.617173341$ & $0.490567118$ & $0.391751044$ & $0.320213657$\\
$11$ & $0.744364870$ & $0.616946076$ & $0.488407525$ & $0.388524317$ & $0.314748970$\\
$12$ & $0.742666838$ & $0.616879313$ & $0.486574603$ & $0.386245187$ & $0.311714989$\\
$13$ & $0.741505108$ & $0.616858175$ & $0.485079642$ & $0.383937277$ & $0.309974296$\\
\midrule
LB & $0.740480489$ & $0.616850275$ & $0.465257613$ & $0.372947545$ & $0.295297873$\\
\bottomrule
\end{tabular}
\end{center}
\end{table}

\begin{figure}
\begin{center}
\begin{tikzpicture}
\draw [decorate,decoration={brace,amplitude=10pt,raise=4pt},yshift=0pt]
(4.5,11.696201) -- (4.5,11.107207) node [black,midway,xshift=1cm] {$n=3$};
\draw [decorate,decoration={brace,amplitude=10pt,raise=4pt},yshift=0pt]
(4.5,9.7155745) -- (4.5,9.2527541) node [black,midway,xshift=1cm] {$n=4$};
\draw [decorate,decoration={brace,amplitude=10pt,raise=4pt},yshift=0pt]
(4.5,7.8747003) -- (4.5,6.9788642) node [black,midway,xshift=1cm] {$n=5$};
\draw [decorate,decoration={brace,amplitude=10pt,raise=4pt},yshift=0pt]
(4.5,6.2651012) -- (4.5,5.5942132) node [black,midway,xshift=1cm] {$n=6$};
\draw [decorate,decoration={brace,amplitude=10pt,raise=4pt},yshift=0pt]
(4.5,4.9118342) -- (4.5,4.4294681) node [black,midway,xshift=1cm] {$n=7$};
\draw (-0.1,11.832390)--(0.1,11.832390);
\draw (0,11.932390)--(0,11.732390);
\draw (0.4,11.728337)--(0.6,11.728337);
\draw (0.5,11.828337)--(0.5,11.628337);
\draw (0.9,11.515424)--(1.1,11.515424);
\draw (1,11.615424)--(1,11.415424);
\draw (1.4,11.376472)--(1.6,11.376472);
\draw (1.5,11.476472)--(1.5,11.276472);
\draw (1.9,11.288428)--(2.1,11.288428);
\draw (2,11.388428)--(2,11.188428);
\draw (2.4,11.231832)--(2.6,11.231832);
\draw (2.5,11.331832)--(2.5,11.131832);
\draw (2.9,11.197552)--(3.1,11.197552);
\draw (3,11.297552)--(3,11.097552);
\draw (3.4,11.165473)--(3.6,11.165473);
\draw (3.5,11.265473)--(3.5,11.065473);
\draw (3.9,11.140003)--(4.1,11.140003);
\draw (4,11.240003)--(4,11.040003);
\draw (4.4,11.122577)--(4.6,11.122577);
\draw (4.5,11.222577)--(4.5,11.022577);
\draw (-0.1,9.9580460)--(0.1,9.9580460);
\draw (0,10.058046)--(0,9.8580460);
\draw (0.4,9.8047998)--(0.6,9.8047998);
\draw (0.5,9.9047998)--(0.5,9.7047998);
\draw (0.9,9.5603345)--(1.1,9.5603345);
\draw (1,9.6603345)--(1,9.4603345);
\draw (1.4,9.3704283)--(1.6,9.3704283);
\draw (1.5,9.4704283)--(1.5,9.2704283);
\draw (1.9,9.2990279)--(2.1,9.2990279);
\draw (2,9.3990279)--(2,9.1990279);
\draw (2.4,9.2688935)--(2.6,9.2688935);
\draw (2.5,9.3688935)--(2.5,9.1688935);
\draw (2.9,9.2576001)--(3.1,9.2576001);
\draw (3,9.3576001)--(3,9.1576001);
\draw (3.4,9.2541911)--(3.6,9.2541911);
\draw (3.5,9.3541911)--(3.5,9.1541911);
\draw (3.9,9.2531897)--(4.1,9.2531897);
\draw (4,9.3531897)--(4,9.1531897);
\draw (4.4,9.2528726)--(4.6,9.2528726);
\draw (4.5,9.3528726)--(4.5,9.1528726);
\draw (-0.1,8.1611581)--(0.1,8.1611581);
\draw (0,8.2611581)--(0,8.0611581);
\draw (0.4,8.0723009)--(0.6,8.0723009);
\draw (0.5,8.1723009)--(0.5,7.9723009);
\draw (0.9,7.9286961)--(1.1,7.9286961);
\draw (1,8.0286961)--(1,7.8286961);
\draw (1.4,7.6616343)--(1.6,7.6616343);
\draw (1.5,7.7616343)--(1.5,7.5616343);
\draw (1.9,7.4894479)--(2.1,7.4894479);
\draw (2,7.5894479)--(2,7.3894479);
\draw (2.4,7.4039768)--(2.6,7.4039768);
\draw (2.5,7.5039768)--(2.5,7.3039768);
\draw (2.9,7.3585068)--(3.1,7.3585068);
\draw (3,7.4585068)--(3,7.2585068);
\draw (3.4,7.3261129)--(3.6,7.3261129);
\draw (3.5,7.4261129)--(3.5,7.2261129);
\draw (3.9,7.2986190)--(4.1,7.2986190);
\draw (4,7.3986190)--(4,7.1986190);
\draw (4.4,7.2761946)--(4.6,7.2761946);
\draw (4.5,7.3761946)--(4.5,7.1761946);
\draw (-0.1,6.7110145)--(0.1,6.7110145);
\draw (0,6.8110145)--(0,6.6110145);
\draw (0.4,6.5255480)--(0.6,6.5255480);
\draw (0.5,6.6255480)--(0.5,6.4255480);
\draw (0.9,6.3978608)--(1.1,6.3978608);
\draw (1,6.4978608)--(1,6.2978608);
\draw (1.4,6.3015216)--(1.6,6.3015216);
\draw (1.5,6.4015216)--(1.5,6.2015216);
\draw (1.9,6.1416231)--(2.1,6.1416231);
\draw (2,6.2416231)--(2,6.0416231);
\draw (2.4,5.9693753)--(2.6,5.9693753);
\draw (2.5,6.0693753)--(2.5,5.8693753);
\draw (2.9,5.8762657)--(3.1,5.8762657);
\draw (3,5.9762657)--(3,5.7762657);
\draw (3.4,5.8278648)--(3.6,5.8278648);
\draw (3.5,5.9278648)--(3.5,5.7278648);
\draw (3.9,5.7936778)--(4.1,5.7936778);
\draw (4,5.8936778)--(4,5.6936778);
\draw (4.4,5.7590592)--(4.6,5.7590592);
\draw (4.5,5.8590592)--(4.5,5.6590592);
\draw (-0.1,5.6270678)--(0.1,5.6270678);
\draw (0,5.7270678)--(0,5.5270678);
\draw (0.4,5.2810754)--(0.6,5.2810754);
\draw (0.5,5.3810754)--(0.5,5.1810754);
\draw (0.9,5.1296827)--(1.1,5.1296827);
\draw (1,5.2296827)--(1,5.0296827);
\draw (1.4,4.9984960)--(1.6,4.9984960);
\draw (1.5,5.0984960)--(1.5,4.8984960);
\draw (1.9,4.9428788)--(2.1,4.9428788);
\draw (2,5.0428788)--(2,4.8428788);
\draw (2.4,4.9163538)--(2.6,4.9163538);
\draw (2.5,5.0163538)--(2.5,4.8163538);
\draw (2.9,4.8032049)--(3.1,4.8032049);
\draw (3,4.9032049)--(3,4.7032049);
\draw (3.4,4.7212346)--(3.6,4.7212346);
\draw (3.5,4.8212346)--(3.5,4.6212346);
\draw (3.9,4.6757248)--(4.1,4.6757248);
\draw (4,4.7757248)--(4,4.5757248);
\draw (4.4,4.6496144)--(4.6,4.6496144);
\draw (4.5,4.7496144)--(4.5,4.5496144);
\draw[black!50!white](0,11.696201)--(4.5,11.696201);
\draw[black!50!white] (0,11.107207)--(4.5,11.107207);
\draw[black!50!white](0,9.7155745)--(4.5,9.7155745);
\draw[black!50!white] (0,9.2527541)--(4.5,9.2527541);
\draw[black!50!white](0,7.8747003)--(4.5,7.8747003);
\draw[black!50!white] (0,6.9788642)--(4.5,6.9788642);
\draw[black!50!white](0,6.2651012)--(4.5,6.2651012);
\draw[black!50!white] (0,5.5942132)--(4.5,5.5942132);
\draw[black!50!white](0,4.9118342)--(4.5,4.9118342);
\draw[black!50!white] (0,4.4294681)--(4.5,4.4294681);
\draw (-0.5,{15*0.275})--(-0.5,{15*0.825});
\draw (-0.55,{15*0.3})--(-0.45,{15*0.3});
\draw (-0.55,{15*0.4})--(-0.45,{15*0.4});
\draw (-0.55,{15*0.5})--(-0.45,{15*0.5});
\draw (-0.55,{15*0.6})--(-0.45,{15*0.6});
\draw (-0.55,{15*0.7})--(-0.45,{15*0.7});
\draw (-0.55,{15*0.8})--(-0.45,{15*0.8});
\draw (-0.55,{15*0.3}) node[left] {$0.3$};
\draw (-0.55,{15*0.4}) node[left] {$0.4$};
\draw (-0.55,{15*0.5}) node[left] {$0.5$};
\draw (-0.55,{15*0.6}) node[left] {$0.6$};
\draw (-0.55,{15*0.7}) node[left] {$0.7$};
\draw (-0.55,{15*0.8}) node[left] {$0.8$};
\draw (-1.4,{15*0.55}) node[rotate=90] {density};
\draw (0,{15*0.275}) node[below] {$d=4$};
\draw (4.5,{15*0.275}) node[below] {$d=13$};
\end{tikzpicture}
\end{center}
\caption{A graphical display of the data from Table~\ref{table:latalldegrees}. Each pair of horizontal lines shows the two-point bound and optimal lattice packing density for a given dimension $n$, while the crosses show the lattice bounds for degrees~$4$ through~$13$.}
\label{figure:latticedata}
\end{figure}
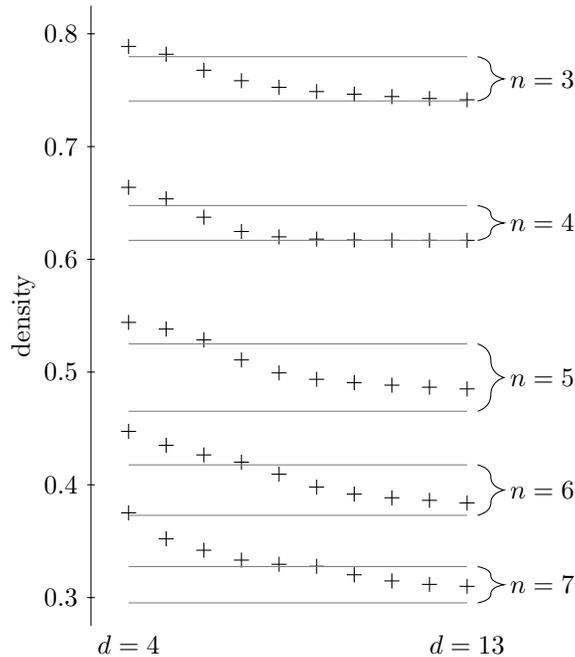

Note that the lattice three-point bound for dimension~$4$ agrees with the density of the $D_4$ lattice to  five decimal places when $d=13$, which is evidence that the bound is sharp in $\R^4$. While $D_4$ has long been known to be optimal among lattices, the existence of a special function producing a sharp bound would be remarkable. To formulate a detailed conjecture, let $D_4$ be the usual normalization as a root lattice, i.e., $\{x \in \Z^4 : x_1+x_2+x_3+x_4 \text{ is even}\}$, and let $D_4^*$ be its dual lattice.

\begin{conjecture}\label{conjectureR4}
There exists a Schwartz function $f \colon \R^4 \times \R^4 \to \R$ that satisfies the following conditions:
\begin{enumerate}
    \item $f(0,0) = 1$ and $\widehat{f}(0,0) = 4$,
    \item $f(x,y) \le 0$ whenever $(x,y) \ne (0,0)$ and each of $|x|$, $|y|$, $|x + y|$, and $|x-y|$ is either $0$ or at least $\sqrt{2}$,
    \item $\widehat{f}(x,y) \ge 0$ for all $x,y \in \R^4$,
    \item $f(x,y) = 0$ for $(x,y) \in D_4 \times D_4$, and \label{cond:4}
    \item $\widehat{f}(x,y) = 0$ for $(x,y) \in D_4^* \times D_4^*$. \label{cond:5}
\end{enumerate}
\end{conjecture}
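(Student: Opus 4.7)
The plan is to mimic Viazovska's proof of the $\R^8$ sphere packing problem \cite{viazovska2017}, but for two-variable Schwartz functions on $\R^4 \times \R^4 \cong \R^8$. The natural variables for an $O(4)$-invariant function of $(x,y)$ are the entries of the Gram matrix
\[
Z(x,y) = \begin{pmatrix} |x|^2 & \langle x,y\rangle \\ \langle x,y\rangle & |y|^2 \end{pmatrix},
\]
which is a positive semidefinite symmetric $2\times 2$ matrix. This is precisely the setting for Siegel modular forms of genus $2$, as suggested in Remark~\ref{rem:stronger}. The Siegel theta series
\[
\Theta^{(2)}_{D_4}(Z) = \sum_{(x,y) \in D_4 \times D_4} e^{\pi i\,\mathrm{tr}(Z(x,y)\,Z)}
\]
is a Siegel modular form of weight $2$ for a congruence subgroup of $\mathrm{Sp}_4(\Z)$, and the eight-dimensional Poisson summation identity for $D_4\times D_4$ manifests as its transformation under $Z\mapsto -Z^{-1}$.

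Concretely, I would seek $f = a f_+ + b f_-$, where $f_\pm$ are eigenfunctions of the $\R^8$ Fourier transform with eigenvalues $\pm 1$, each obtained by a Laplace-type integral transform applied to a weakly holomorphic Siegel modular form of genus $2$; the sign of the Fourier eigenvalue is controlled by the modular weight of the form and its behavior under the symplectic involution. The Fourier coefficients of these Siegel forms should be chosen so that after the Laplace transform, $f$ vanishes on $(D_4\times D_4)\setminus\{(0,0)\}$ and $\widehat f$ vanishes on $(D_4^*\times D_4^*)\setminus\{(0,0)\}$. These conditions amount to linear relations on the Fourier coefficients indexed by half-integral positive semidefinite $2\times 2$ matrices representable by $D_4$ or $D_4^*$. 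The two free parameters $a,b$ are then pinned down by $f(0,0)=1$ and $\widehat f(0,0)=4$, consistent with the covolume of $D_4\times D_4$ in $\R^8$.

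The main obstacle is verifying the two sign conditions: $\widehat f \ge 0$ everywhere and $f \le 0$ on the semialgebraic set $S_{\mathrm{lat},4}\setminus\{(0,0)\}$ from Theorem~\ref{thm:latticethreepoint}. Following \cite{viazovska2017,CKMRV2017}, the plan is to exhibit contour integral representations of $f_\pm$ over (pieces of) the Siegel upper half-space and extract sign information from the Fourier expansions at each of the cusps of the relevant congruence subgroup. Two complications relative to the classical case are that one is now integrating over a two-dimensional moduli space rather than a one-dimensional path, so deforming contours to expose positivity is considerably more delicate; and the inequality set $S_{\mathrm{lat},4}$ is not preserved by the natural $\mathrm{GL}_2(\Z)$-action under which a Siegel form is invariant. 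A promising reformulation, hinted at in Remark~\ref{rem:stronger}, is to replace $S_{\mathrm{lat},4}$ by the larger $\mathrm{GL}_2(\Z)$-invariant set $\{(x,y): |ax+by|\in\{0\}\cup[\sqrt{2},\infty)\text{ for all coprime }(a,b)\in\Z^2\}$, since a magic function on this enlarged set would automatically satisfy Conjecture~\ref{conjectureR4} and the enlarged symmetry seems better matched to Siegel modularity. The hardest step is likely the explicit construction of the required weakly holomorphic genus-$2$ Siegel forms with prescribed poles and vanishing, where the usual dimension formulas and bases are much less developed than in the classical case.
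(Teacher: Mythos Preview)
The statement you are addressing is a \emph{conjecture}, not a theorem: the paper does not prove Conjecture~\ref{conjectureR4} and offers no argument for it beyond numerical evidence (the $d=13$ computation agreeing with the $D_4$ density to five decimal places) and the observation that conditions~(4) and~(5) are redundant given (1)--(3) via Poisson summation over $D_4 \times D_4$. There is therefore no proof in the paper to compare your proposal against.

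Your proposal is not a proof either, and you are candid about this: it is a research outline with explicitly acknowledged obstacles (constructing the requisite weakly holomorphic genus-$2$ Siegel forms, deforming contours over a two-real-dimensional domain to extract sign information, and the mismatch between $S_{\mathrm{lat},4}$ and the $\mathrm{GL}_2(\Z)$-action). The direction you sketch is in fact the one the paper itself gestures toward in Remark~\ref{rem:stronger}, which notes that genus-$2$ Siegel modular forms produce dual feasible solutions and that the $\mathrm{GL}_2(\Z)$-symmetrized version of the constraint set is the natural setting. So your plan is well aligned with the authors' own hints, but it remains a program rather than a proof; the hard analytic steps (existence of the forms with the right polar and vanishing data, and the two global sign verifications) are precisely where the conjecture lives, and nothing in your outline resolves them.
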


In other words, $f$ satisfies the hypotheses of Theorem~\ref{thm:latticethreepoint} and the complementary slackness conditions needed to match the density of $D_4$. In fact, conditions~\eqref{cond:4} and~\eqref{cond:5} are redundant, because they follow from the other three conditions via Poisson summation over $D_4 \times D_4$.

It is unclear whether we should expect the lattice three-point bound to be sharp in any other cases, aside from those in which the two-point bound is already sharp. Table~\ref{table:latalldegrees} and Figure~\ref{figure:latticedata} show no sign of convergence to sharp bounds in dimensions~$5$ through~$7$. Dimension~$3$ could plausibly be sharp. We hesitate to conjecture it, because it is unclear why the convergence would be much slower than in dimension~$4$, but if we had to guess, we would guess that dimension~$3$ is also sharp.

As noted in Remark~\ref{rem:stronger}, the lattice three-point bound can be strengthened by including additional constraints. Table~\ref{table:altlatalldegrees} shows the effect of incorporating two more constraints.

\begin{table}
\caption{The three-point lattice bound with $S_{\mathrm{lat},n}$ replaced by $\{(x,y) : |x|,|y|,|x\pm y|, |x \pm 2y|, |2x \pm y| \in \{0\}\cup [r,\infty)\}$.}
\label{table:altlatalldegrees}
\begin{center}
\begin{tabular}{cccccc}
\toprule
$d$\textbackslash $n$ & $3$ & $4$ & $5$ & $6$ & $7$\\
\midrule
$\Delta_2$ & $0.779746762$ & $0.647704966$ & $0.524980022$ & $0.417673416$ & $0.327455611$\\
\midrule
$4$ & $0.789296829$ & $0.664404917$ & $0.544286855$ & $0.450821161$ & $0.375481755$\\
$5$ & $0.781909514$ & $0.653717975$ & $0.538210280$ & $0.435080014$ & $0.352637934$\\
$6$ & $0.767803262$ & $0.638061483$ & $0.528582888$ & $0.426546741$ & $0.342023570$\\
$7$ & $0.758421721$ & $0.624885125$ & $0.511042514$ & $0.420110428$ & $0.333262111$\\
$8$ & $0.752556256$ & $0.619949415$ & $0.499419261$ & $0.409580988$ & $0.329530413$\\
$9$ & $0.748489069$ & $0.617921450$ & $0.493599420$ & $0.397963691$ & $0.327792164$\\
$10$ & $0.744763159$ & $0.617170201$ & $0.490566217$ & $0.391765872$ & $0.320255703$\\
$11$ & $0.742104935$ & $0.616945200$ & $0.488398254$ & $0.388526826$ & $0.314781075$\\
$12$ & $0.741025543$ & $0.616879078$ & $0.486488035$ & $0.386228495$ & $0.311717562$\\
$13$ & $0.740655261$ & $0.616858082$ & $0.484733530$ & $0.383486370$ & $0.309988625$\\
\midrule
LB & $0.740480489$ & $0.616850275$ & $0.465257613$ & $0.372947545$ & $0.295297873$\\
\bottomrule
\end{tabular}
\end{center}
\end{table}

\subsection{Rigorous verification of the results}\label{sec:verification}

We can obtain explicit bounds by solving semidefinite programs numerically, but the results generally lack precise estimates for round-off error. To obtain rigorous bounds, we must prove that there exist exact solutions to these semidefinite programs with specified behavior. This task amounts to proving the existence of a solution to a semidefinite feasibility problem of the form
\begin{equation}\label{eq:sdpfeasibiliy}
X \succeq 0 \text{ and } \langle A_i, X \rangle = b_i \text{ for } i=1,\dots,m,
\end{equation}
i.e., the existence of a positive semidefinite matrix satisfying certain linear constraints.

For the cases that arise in our calculations, we can assume the constraints are linearly independent and a strictly feasible solution exists; that is, we assume there exists a positive definite matrix satisfying the linear constraints. We further assume we do not have direct access to the matrices $A_1,\dots,A_m$ and the vector $b$, but only to matrices $\tilde A_1,\dots, \tilde A_m$ and a vector $\tilde b$ whose entries are intervals containing the corresponding entries from $A_i$ and $b$. Equivalently, we can compute these matrices and vector rigorously using interval arithmetic.

Let $\bar A_1,\dots,\bar A_m$ and $\bar b$ be the matrices and vector whose entries are the midpoints of the intervals giving the corresponding entries in $\tilde A_1,\dots,\tilde A_m$ and $\tilde b$. It will be convenient to use the vectorized form, for which we use the symmetric vectorization map $\mathrm{svec}$, which gives a bijection from the symmetric $n \times n$-matrices to the column vectors containing $\binom{n+1}{2}$ entries.

We first solve the semidefinite feasibility problem 
\[
\mathrm{svec}^{-1}(x) \succeq 0, \quad \bar A x = \bar b 
\]
(here written in vectorized form) in floating point arithmetic using the high precision floating point semidefinite programming solver SDPA-GMP \cite{yamashita2012sdpa}. In practice this results in a vector $x^*$ with $\bar A x^* \approx \bar b$ and with $\mathrm{svec}^{-1}(x^*)$ positive definite. 

Then we perform a nonrigorous LU decomposition with pivoting in floating point arithmetic (using UmfpackLU \cite{davis2004lu}) to find a set $I$ of indices corresponding to a maximal set of linearly independent columns of $\bar A$. Consider the square linear system
\[
\tilde A_I y = \tilde b - \sum_{i \not \in I} x^*_i \tilde A_i,
\]
where $\tilde A_I$ is the matrix containing the columns of $\tilde A$ indexed by $I$, and $\tilde A_i$ is the $i$-th column. We solve this system rigorously in ball arithmetic using  the LU factorization in Arb \cite{johansson2017arb}, which results in a solution $\tilde y$ for which it is guaranteed that there exists a solution  $y$ to 
\[
A_I y = b - \sum_{i \not \in I} x^*_i A_i
\]
with $y_i \in \tilde y_i$ for all $i$. Then, for $\tilde x$ defined by
\[
\tilde x_i = \begin{cases} \tilde y_i & \text{if $i \in I$, and}\\
x_i^* & \text{otherwise,}\end{cases}
\] 
the existence of a solution $x$ to $Ax=b$ with $x_i \in \tilde x_i$ for all $i$ is guaranteed. 

Then we use Arb to compute the Cholesky factorization of $\mathrm{svec}^{-1}(\tilde x)$, which shows that any matrix $X$ satisfying $\mathrm{svec}(X)_i \in \tilde x_i$ for all $i$ is positive semidefinite. This shows $\tilde x$ contains a solution to \eqref{eq:sdpfeasibiliy}.

The above procedure can be used to obtain rigorous bounds from the semidefinite programming formulations of the three-point bounds. For this we first solve the semidefinite programming minimization problem in floating point arithmetic to get an approximation of the optimal objective value $v$. Then we solve the problem in floating point arithmetic as a semidefinite programming feasibility problem with the additional constraint where we set the objective function equal to $v + \varepsilon$ for some small constant $\varepsilon > 0$. By doing this the solver will find a point in the relative interior of the feasible set, which means that all eigenvalues that can be strictly positive (given the affine constraints) will be strictly positive. In the matrices used for the multivariable sum-of-squares characterizations some of the eigenvalues will be zero, but these correspond to diagonal entries of the matrices being zero. We then consider the semidefinite feasibility problem with the corresponding rows and columns removed, so that we can use (a block-diagonal version of) the above procedure to get rigorous bounds.

\appendix
\section{Existence of optimal functions}

\begin{proposition} \label{prop:LPattain}
For each dimension $n \ge 1$, there exists a continuous, integrable function that optimizes the linear programming bound for sphere packing in $\R^n$.
\end{proposition}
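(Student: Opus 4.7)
The plan is to apply the direct method of the calculus of variations, extracting a weak-$*$ limit from a minimizing sequence, following the strategy of \cite[Theorem~3]{gonccalves2017hermite}. Fix the separation radius $r > 0$ and let $c$ denote the value of the $n$-dimensional linear programming bound. I would start with a minimizing sequence $(f_j)$ of continuous, integrable feasible functions with $f_j(0) \to c$, and by averaging each $f_j$ over the orthogonal group $O(n)$ reduce to the case where every $f_j$ is radial.

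First I would derive uniform estimates. Positive definiteness (from $\widehat f_j \ge 0$) gives $|f_j(x)| \le f_j(0) \le c+1$ eventually. Combining this $L^\infty$ bound with the normalization $\int f_j = \widehat f_j(0) = 1$ and the sign condition $f_j \le 0$ on $\{|x| \ge r\}$ produces a uniform $L^1$ bound on the $f_j$, which in turn yields a uniform $L^\infty$ bound on $\nu_j := \widehat{f_j}$, while $\|\nu_j\|_1 = f_j(0)$ is also uniformly bounded. Viewing the $\nu_j\,dy$ as finite positive Borel measures, the Banach-Alaoglu theorem extracts a weak-$*$ convergent subsequence in $C_0(\R^n)^*$ with limit $\mu$, a finite, nonnegative, radial Radon measure of total mass at most $c$.

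The candidate optimizer is then $f(x) := \int e^{2\pi i \langle x, y\rangle}\, d\mu(y)$, which is automatically continuous, bounded, and positive definite, with $f(0) = \mu(\R^n) \le c$. Assuming tightness of $(\nu_j)$ and absolute continuity of $\mu$ with an $L^1$ density, one would verify that $f_j \to f$ locally uniformly, that the sign condition $f \le 0$ on $\{|x| \ge r\}$ passes to the limit, that $\widehat f = \mu \ge 0$ as an $L^1$ function with $\widehat f(0) = 1$ (using the uniform integrability provided by the sign and $L^1$ bounds on $f_j$), and finally that $f(0) = c$, so that $f$ attains the optimum.

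The hard part will be two interlocked technical issues: (i) establishing tightness of $(\nu_j)$ to prevent mass from escaping to infinity in the weak-$*$ limit, and (ii) showing that $\mu$ is absolutely continuous with $L^1$ density, so that $f$ itself is integrable. For (i), the strategy is to exploit the uniform $L^1 \cap L^\infty$ bounds on $\nu_j$ together with the strong structural constraints of positive definiteness and the pointwise sign condition on $f_j$: if mass of $\nu_j$ escaped to high frequencies, the resulting oscillations in $f_j$ would be incompatible with $f_j \le 0$ throughout $\{|x| \ge r\}$, uniformly in $j$. For (ii), the observation is that any atomic or singular-continuous component of $\mu$ would contribute a nondecaying, almost-periodic piece to $f$ with strictly positive Bohr mean, whereas the sign condition $f \le 0$ on a set of full asymptotic density forces the Bohr mean of $f$ to be nonpositive; combined with radial symmetry, this forces $\mu$ to be absolutely continuous, with density equal to the weak-$*$ limit of the $\nu_j$ in $L^1$.
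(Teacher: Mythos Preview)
Your uniform estimates are correct, but both of your ``hard parts'' are mishandled. For (ii), the Bohr-mean argument does not work as stated: the Bohr mean of $f = \check\mu$ equals $\mu(\{0\})$, not the total atomic mass, so the sign condition $f \le 0$ on $\{|x|\ge r\}$ only yields $\mu(\{0\})\le 0$ and says nothing about atoms at nonzero frequencies or about a singular continuous part. In fact (ii) is trivial from what you already have: the uniform bound $\|\nu_j\|_\infty \le C$ that you derived implies, by testing the weak-$*$ limit against nonnegative $\phi \in C_c(\R^n)$, that $\mu \le C\,dy$ as measures, so $\mu$ is absolutely continuous with bounded density, and since $\mu$ is finite the density also lies in $L^1$.

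The genuine gap is (i). Your oscillation heuristic is not a proof, and without tightness you cannot pass from weak-$*$ convergence of the $\nu_j$ to pointwise convergence $f_j(x) \to f(x)$, because $y \mapsto e^{2\pi i \langle x,y\rangle}$ does not lie in $C_0(\R^n)$. Without pointwise convergence of the $f_j$ you cannot transfer the sign condition to $f$, and without pointwise convergence of the $\widehat{f_j}$ you cannot run Fatou to obtain $\widehat f \in L^1$ or $f(0) \le c$. The paper bypasses this entirely by working in $L^2$ rather than in measures. The same $L^1 \cap L^\infty$ bounds give a uniform $L^2$ bound; Banach-Alaoglu yields a weakly convergent subsequence in $L^2$; and then Mazur's lemma produces convex combinations of the $f_j$ that converge \emph{strongly} in $L^2$ to the weak limit. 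Since every constraint in the problem is convex, these convex combinations remain feasible and minimizing, so one may assume the $f_j$ themselves converge strongly in $L^2$, hence (after passing to a further subsequence) pointwise a.e.\ on both the space side and the frequency side simultaneously. From there Fatou's lemma, combined with the sign conditions and dominated convergence on the ball, gives $\widehat f \in L^1$, continuity of $f$, integrability of $f$, $f(0) \le c$, and $\widehat f(0) \ge 1$ directly, with no tightness or Bohr-mean argument required.
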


\begin{proof}
We prove this proposition by adapting the proof of Theorem~3 in \cite{gonccalves2017hermite}. Let $f_1, f_2, \dots$ be a sequence of auxiliary functions on $\R^n$ that converge to the optimal bound. In other words, each $f_j$ is continuous and integrable, $f_j(x) \le 0$ for $|x| \ge 1$, $\widehat{f}_j(y) \ge 0$ for all $y$, $\widehat{f}_j(0)=1$, and $\lim_{j \to \infty} f_j(0)$ is as small as possible. We would like to obtain a function $f$ satisfying the same conditions with $f(0) = \lim_{j \to \infty} f_j(0)$.

We start by observing a few inequalities. Because $f_j$ is positive definite, $\|f_j\|_\infty \le f_j(0)$. Furthermore, because $f_j(x) \le 0$ for $|x| \ge 1$,
\[
\|f_j\|_1 = \int_{B_1^n(0)} (|f_j(x)|+f_j(x)) \, dx - \int_{\R^n} f_j(x) \, dx \le 2 f_j(0) \vol(B_1^n(0)) - \widehat{f}_j(0).
\]
Thus, $\|f_j\|_\infty$ and $\|f_j\|_1$ are both bounded, as is $\|f_j\|_2$ because $\|f_j\|_2^2 \le \|f_j\|_\infty \|f_j\|_1$.

By the Banach-Alaoglu theorem, the unit ball in $L^2(\R^n)$ is weakly compact, and so by passing to a subsequence we can assume that $f_j$ converges in the weak topology on $L^2(\R^n)$ to a function $f \in L^2(\R^n)$. Furthermore, Mazur's lemma gives convex combinations $g_j$ of $f_j,f_{j+1},\dots,f_{k_j}$ that converge strongly to $f$. The conditions we have imposed on the functions $f_j$ are all convex, so without loss of generality we can assume that $\|f_j - f\|_2 \to 0$ as $j \to \infty$. By the Plancherel theorem, $\| \widehat{f}_j - \widehat{f}\|_2 \to 0$ as well. We can also assume that $f_j$ and $\widehat{f}_j$ converge to $f$ and $\widehat{f}$ pointwise almost everywhere, again by passing to a subsequence (see Theorem~3.12 in \cite{Rudin}).

By Fatou's lemma, $\widehat{f}$ must be integrable, since $\widehat{f}_j$ converges to $\widehat{f}$ pointwise almost everywhere, $\widehat{f}_j \ge 0$, and $\|\widehat{f}_j\|_1 = f_j(0)$ is bounded. Because $\widehat{f}$ is integrable, $f$ is continuous and bounded. It follows that $f$ is integrable on $B_1(0)$, and Fatou's lemma again shows that $f$ is integrable on $\R^n \setminus B_1^n(0)$, so $f$ is in $L^1(\R^n)$.

The inequalities $f(x) \le 0$ for $|x| \ge 1$ and $\widehat{f}(y) \ge 0$ for all $y$ hold by convergence almost everywhere and continuity. Furthermore, Fatou's lemma shows that
\[
f(0) = \int_{\R^n} \widehat{f}(y) \, dy \le \lim_{j \to \infty} \int_{\R^n} \widehat{f}_j(y) \, dy = \lim_{j \to \infty} f_j(0)
\]
and
\[
\widehat{f}(0) = \int_{\R^n} f(x) \, dx \ge \lim_{j \to \infty} \int_{\R^n} f_j(x) \, dx = 1
\]
(the integral over $B_1^n(0)$ converges by dominated convergence, and $f(x) \le 0$ elsewhere). We conclude that $f(0)/\widehat{f}(0) \le \lim_{j \to \infty} f_j(0)$, and thus $f$ is an optimal auxiliary function. In fact, we must have $f(0) = \lim_{j \to \infty} f_j(0)$ and $\widehat{f}(0)=1$, since otherwise $f$ would be better than optimal.
\end{proof}

\section*{Acknowledgements}

We are grateful to Nima Afkhami-Jeddi, Felipe Gon\c{c}alves, Tom Hartman, Nathan Kaplan, Rupert Li, Fernando Oliveira, Amirhossein Tajdini, and Frank Vallentin for helpful discussions.

\end{document}